\newtheorem{Thm}{Theorem}[subsection]
\newtheorem{Prop}[Thm]{Proposition}
\newtheorem{Def}[Thm]{Definition}
\newtheorem{Def/Thm}[Thm]{Definition/Theorem}
\newtheorem{Cor}[Thm]{Corollary}
\newtheorem{Lemma}[Thm]{Lemma}
\newtheorem{Ass}[Thm]{Assumption}
\theoremstyle{remark}
\newtheorem{Rmk}[Thm]{Remark}
\newtheorem{EG}[Thm]{Example}
\numberwithin{equation}{subsection}
\newcommand{\ot }{\otimes}
\newcommand{\ra }{\rightarrow}
\newcommand{\lra }{\longrightarrow}
\newcommand{\Map}{{\mathrm{Map}}}
\newcommand{\Hom }{{\mathrm{Hom}}}
\newcommand{\Spec}{{\mathrm{Spec}}}
\newcommand{\Pic}{{\mathrm{Pic}}}
\newcommand{\cA}{{\mathcal{A}}}
\newcommand{\cO}{{\mathcal{O}}}
\newcommand{\cL}{{\mathcal{L}}}
\newcommand{\cE}{{\mathcal{E}}}
\newcommand{\cF}{{\mathcal{F}}}
\newcommand{\cH}{{\mathcal{H}}}
\newcommand{\cP}{{\mathcal{P}}}
\newcommand{\cQ}{{\mathcal{Q}}}
\newcommand{\cV}{{\mathcal{V}}}
\newcommand{\cC}{{\mathcal{C}}}
\newcommand{\cX}{{\mathcal{X}}}
\newcommand{\fC}{{\mathfrak{C}}}
\newcommand{\fP}{{\mathfrak{P}}}
\newcommand{\G}{{\bf G}}
\newcommand{\bT}{{\bf T}}
\newcommand{\bB}{{\bf B}}
\newcommand{\bS}{{\bf S}}
\newcommand{\PP }{{\mathbb P}}
\newcommand{\QQ }{{\mathbb Q}}
\newcommand{\CC }{{\mathbb C}}
\newcommand{\ZZ }{{\mathbb Z}}
\newcommand{\RR }{{\mathbb R}}
\newcommand{\Mgk}{\overline{M}_{g,k}}
\newcommand{\cCgk}{\mathcal{C}_{g,k}}
\newcommand{\fMgk}{\mathfrak{M}_{g,k}}
\newcommand{\fCgk}{\mathfrak{C}_{g,k}}
\newcommand{\fBun}{\mathfrak{B}un_{\bf {G}}}
\newcommand{\fS}{\mathfrak{S}}
\newcommand{\VmodG}{V/\!\!/\G}
\newcommand{\WmodG}{W/\!\!/\G}
\newcommand{\QmapV}{\mathrm{Qmap}_{g,k}(\VmodG,\beta)}
\newcommand{\QmapW}{\mathrm{Qmap}_{g,k}(\WmodG,\beta)}
\newcommand{\QmapWe}{\mathrm{Qmap}_{g,k}^\epsilon(\WmodG,\beta)}
\newcommand{\BunG}{\mathfrak{B}un_\G}
\newcommand{\BunGbeta}{\mathfrak{B}un_{\G,\beta}}
\begin{document}
\title{Stable quasimaps to GIT quotients}

\begin{abstract} We construct new compactifications with good properties of moduli spaces
of maps from nonsingular marked curves to a large class of GIT quotients. This generalizes from a unified
perspective many particular examples considered earlier in the literature.
\end{abstract}
\dedicatory {Dedicated to the memory of Professor Hyo Chul Myung}

\author{Ionu\c t Ciocan-Fontanine}
\noindent\address{School of Mathematics, University of Minnesota, 206 Church St. SE,
Minneapolis MN, 55455, and\hfill
\newline \indent School of Mathematics, Korea Institute for Advanced Study,
85 Hoegiro, Dongdaemun-gu, Seoul, 130-722, Korea}
\email{ciocan@math.umn.edu}

\author{Bumsig Kim}
\address{School of Mathematics, Korea Institute for Advanced Study,
85 Hoegiro, Dongdaemun-gu, Seoul, 130-722, Korea}
\email{bumsig@kias.re.kr}

\author{Davesh Maulik}
\address{Department of Mathematics, Massachusetts Institute of Technology, Cambridge, MA }
\email{dmaulik@math.mit.edu}

\maketitle

\section{Introduction}

The aim of this paper is to introduce new
virtually smooth modular compactifications of spaces
of maps from curves to a large class of targets.

Let $C$ be a nonsingular complex projective curve, let $X$ be a nonsingular complex projective variety, and let $\beta\in H_2(X,\ZZ)$
be an effective curve class. The standard way of putting a scheme structure on the set $\mathrm{Map}_\beta(C,  X)$ of algebraic maps
from $C$ to $X$ of class $\beta$ is by viewing it as an open subset in the Hilbert scheme of the product $C\times X$,
via the identification of a map with its graph.
The restriction of the universal family on the Hilbert scheme gives a universal map
$$\begin{array}{ccc}
C\times\mathrm{Map}_\beta(C,X)&\stackrel f\lra & X\\ \\
\downarrow \pi& & \\ \\
\mathrm{Map}_\beta(C,X)& &
\end{array}
$$
The obstruction theory of the Hilbert scheme restricts to the obstruction theory
\begin{equation}\label{obstheory}\left (R^\bullet\pi_*f^*T_X\right)^\vee\end{equation}
of the space of maps. While \eqref{obstheory} is a perfect obstruction theory, and therefore $\mathrm{Map}_\beta(C,X)$ is
virtually smooth, it is well-known
that the obstruction theory of the Hilbert scheme fails to be perfect at points corresponding
to subschemes which are not local complete intersections. Hence the compactification provided by the Hilbert scheme
is not suitable to defining invariants of $X$ via intersection theory. For a few special
cases of varieties $X$, other compactifications have been used which do not suffer from this
defect. Specifically,

$(i)$ when $X$ is a Grassmannian, the
Quot schemes on $C$ carry a perfect obstruction theory, see \cite{MO} and \cite{CiKa2};

$(ii)$ when $X$ is a toric variety, the ``toric compactifications" (or linear sigma models) of Givental and Morrison-Plesser
also carry perfect obstruction theories;

$(iii)$ when $X$ is the Hilbert scheme of points in $\CC^2$, Diaconescu's moduli spaces of ADHM sheaves, \cite{D}, give partial
compactifications carrying perfect obstruction theories.

All these spaces have the common feature that the curve $C$ is kept fixed and the map degenerates, so that 
points in the boundary correspond to {\it rational} maps $C - -\ra X$.

In the case of a general $X$, Kontsevich approached the problem by insisting that in the compactification the boundary
points still parametrize honest maps. This requires that the domain curve degenerates. It is then natural also
to allow the domain curve to vary in moduli (and to consider markings). The approach leads to the
virtually smooth moduli stacks of stable maps $\Mgk(X,\beta)$ used to define the Gromov-Witten invariants of $X$.
For stable map spaces, (\ref{obstheory}) becomes a relative perfect obstruction theory over the moduli Artin stack
$\fMgk$ of (prestable) domain curves. In particular, the graph space $\overline{M}_{g,0}(X\times C, (\beta, 1))$ provides
a virtually smooth compactification of $\mathrm{Map}_\beta(C,X)$. 

The paper \cite{MOP} introduced, in case $X$ is a Grassmannian, a hybrid version of these compactifications,
called the moduli of stable quotients. It allows both the domain curve to vary, and the map to acquire base points. Then
\cite{MOP} shows that after imposing an appropriate stability condition, the resulting moduli space is a proper Deligne-Mumford
stack of finite type, carrying a relative perfect obstruction theory over $\fMgk$ and hence a virtual fundamental class.

The paper \cite{CK} treated the toric counterpart of the story, that is, the hybrid version of the compactifications in $(ii)$ above
when the domain curve is allowed to degenerate and to carry markings. The resulting objects, called {\it stable toric quasimaps},
are proven there to form a proper DM stack of finite type.
Furthermore, it is observed that it is more natural to
view the moduli spaces as stacks over (products of) {\it Picard stacks over} $\fMgk$, the corresponding
relative obstruction theory being that of sections of line bundles on curves.

This suggests that the common feature
of examples $(i)$-$(iii)$ which should be exploited is that they are all GIT quotients. The obstruction theory
that needs then to be studied is that of sections of fibrations associated to principal $\G$-bundles over curves, with $\G$ a reductive 
complex algebraic
group.  Once this is recognized, the most natural way to view the problem is in a stack-theoretic context.

 Let $X=\WmodG$ be a (nonsingular, projective) GIT quotient obtained from a linearized $\G$-action on a quasiprojective
 variety $W$ for which the stable and semistable loci coincide. Let $\cA=[W/\G]$ be the {\it stack quotient}. A map from a
 prestable curve $C$ to $\cA$ corresponds to a pair $(P,u)$, with $P$ a principal $\G$-bundle on $C$ and $u$ a section of the
 induced fiber bundle $P\times_\G W\lra C$ with fiber $W$. It has a ``homology" class $\beta\in \Hom_\ZZ(\Pic(\cA),\ZZ)$.
 We then have the moduli stack $\fMgk([W/\G],\beta)$ parametrizing
families of tuples
$$((C,p_1,\dots,p_k),P,u)$$
with $(C,p_1,\dots,p_k)$ a prestable $k$-pointed curve of genus $g$. While $\fMgk([W/\G],\beta)$ contains the Deligne-Mumford stack
$$M_{g,k}(\WmodG,\beta)$$
of maps from nonsingular pointed curves to $\WmodG$ as an open substack,
it is in general a nonseparated Artin stack of infinite type.

A sophisticated study of stacks  of the type $\fMgk([W/\G],\beta)$ was begun in \cite{FTT},
with the goal to produce an algebro-geometric version of the ``Gromov-Witten gauge invariants"
introduced in symplectic geometry via solutions to vortex equations (see \cite{CGS00, CGMiRS02, MiR03, MiRT04, GW08, GW09}). 
The point of view we pursue
in this paper is different in that we seek to obtain moduli stacks with good properties by imposing
additional conditions on the maps we consider, as well as on the variety $W$. Namely, we will want our moduli stacks to be
open substacks in $\fMgk([W/\G],\beta)$  which are of finite type, Deligne-Mumford, proper, and to carry a perfect obstruction theory extending the
natural such obstruction theory on the stack $M_{g,k}(\WmodG,\beta)$.
This will allow us
to construct systems of numerical ``invariants" (in fact, new Cohomological Field Theories) on the cohomology of the target varieties.

We highlight here the main points:
\begin{itemize}
\item If we restrict to $((C,p_1,\dots,p_k),P,u)$ such that $u$ sends the generic point of each irreducible component of $C$ to the
GIT-{\it stable locus} $W^s\subset W$, the resulting moduli stack is of finite type over $\fMgk$. We call these $((C,p_1,\dots,p_k),P,u)$ {\it quasimaps to}
$\WmodG$. The closed points $y\in C$ with $u(y)\in W\setminus W^s$ are called the {\it base-points} of the quasimap (we assume throughout that there are no
strictly semistable points in $W$).

\item To ensure separatedness, we restrict to quasimaps whose base-points (if any) are away from the nodes and markings of $C$.

\item To obtain a Deligne-Mumford stack of finite type, we require the quasimaps to satisfy a stability condition.

\item To ensure properness we require $W$ to be {\it affine}.

\item To ensure that the natural obstruction theory is perfect we require that 
$W$ has at most local complete intersection singularities.

\end{itemize}

The stability condition we consider initially generalizes the ones from \cite{MOP}, \cite{CK} and we call the
resulting objects stable quasimaps to $\WmodG$. 
It is a simple fact
that Kontsevich's stable maps to $\WmodG$ are also obtained by imposing a
stability condition of the same kind on quasimaps.
Moreover, generalizing the observation of \cite{MM1}, \cite{MM2} for target $\PP^n$ (extended in \cite{Toda} to Grassmannians),
we show that there is a one-parameter family of stability conditions producing open substacks of $\fMgk([W/\G],\beta)$
with all the good properties we require, and which interpolate between stable maps and stable quasimaps. The virtual classes
of these moduli spaces will potentially differ over loci in their boundaries. At the level of invariants, a correspondence between
the stable quasimap and Gromov-Witten theories of $\WmodG$ will then require a (generally nontrivial) transformation which
records wall-crossing contributions. In this paper we restrict ourselves to setting up the foundations of quasimap theory, leaving the
investigation of the correspondence to future work.

We now describe in more detail the contents and organization of the paper. 

Section 2 is devoted to a description of
the general set-up that will be used for the rest of the paper: we consider nonsingular GIT quotients obtained from the action of  
a reductive complex algebraic group
$\G$ on an affine complex algebraic variety $W=\Spec(A)$, linearized by a character of $\G$. The quotient $\WmodG$ is always projective over
the ``affine quotient" $\Spec(A^\G)$ of $W$ by $\G$, but we do not require the affine quotient to be a point.
We also include here a quick discussion of
the moduli stack $\fBun$ parametrizing principal $\G$-bundles on (varying) prestable marked curves.

In section 3 the notions of quasimaps and stable quasimaps to $\WmodG$ are introduced.
We then prove various boundedness results for families
of quasimaps and stable quasimaps.

The main properties of the moduli stack $\QmapW$ of stable quasimaps to $\WmodG$ 
with fixed numerical data $(g,k,\beta)$ are established
in Section 4. While the fact that $\QmapW$ is an Artin stack locally of finite type follows quite easily
by combining some results in \cite{Lieblich} and \cite{AOV}, we give a straightforward direct
construction which exhibits it as a stack with a schematic morphism to $\fBun$, whose fibers are
open subschemes in appropriate Hilbert schemes. Boundedness is used to show that the stack is of finite type, while stability implies that it is Deligne-Mumford.
Finally, we prove that $\QmapW$ is proper over the affine quotient. Our argument uses crucially the assumption
that $W$ is affine.

The usefulness of the construction of the moduli stack 
described above is that it provides us immediately
with a canonical obstruction theory for $\QmapW$, relative to the smooth stack $\fBun$, given by the deformation theory
of Hilbert schemes.
When $W$ is nonsingular it is very easy to see that this obstruction theory is perfect. By a
more elaborate argument we prove that the obstruction theory remains perfect if lci singularities only
are allowed in the unstable locus of $W$. It is also clear from the description of the obstruction theory
that the result is optimal: as long as the stability condition allows base points, the obstruction theory
will fail to be perfect if the singularities are worse than lci. In particular, this answers a question of Marian, Oprea, and Pandharipande in \cite{MOP}
by showing that the ``moduli of stable quotients" to a projective subvariety $X\subset\PP^n$ carries a natural virtual class only
when $X$ is a complete intersection.
On the other hand, since the stability
condition that leads to the usual Kontsevich stable maps forbids base points,  the unstable locus in $W$ plays no role
and we obtain a perfect obstruction theory
inducing the usual virtual class of $\Mgk(\WmodG,\beta)$ for all targets $\WmodG$.

A short alternative construction is described in Section 5 for the case when $W=V$ is a vector space.
It leads naturally to a simple description of
the {\it absolute} perfect obstruction theory  and the corresponding virtual class of $\mathrm{Qmap}$
is shown to be globally a refined top Chern class of a vector bundle on a smooth Deligne-Mumford stack.
 It is worth pointing out
that in this case both the tangent bundle of $\VmodG$ and the obstruction theory of $\QmapV$ 
may be presented using generalized ``Euler sequences", which facilitate computations in many instances.

Section 6 introduces the descendent quasimap integrals, as well as the analogues in quasimap theory of the
twisted Gromov-Witten invariants of Coates and Givental for proper targets $\WmodG$. Furthermore, 
equivariant invariants for non-proper targets endowed with torus actions having proper fixed points loci 
are also discussed (Nakajima quiver varieties are
a large class of particularly interesting examples of such targets).

Finally, Section 7 deals with two variants of our construction. The first is the variation of stability condition mentioned
above. Namely, we introduce for each positive rational number $\epsilon$ the notion of $\epsilon$-stable
quasimaps. When $\epsilon$ is sufficiently close to zero, this coincides with the notion of stable quasimap, while
when $\epsilon$ is sufficiently large it recovers stable maps to $\WmodG$. The arguments from sections 3 and 4 extend
in a straightforward manner to prove that the moduli stacks of $\epsilon$-stable quasimaps are Deligne-Mumford, of finite type,
proper over the affine quotient, and carry natural perfect obstruction theories under the same condition on the singularities of $W$.
For a fixed class $\beta$, the interval $(0,+\infty)$ over which the stability parameter $\epsilon$ ranges is divided into chambers 
by finitely many walls of the form $\epsilon=1/m$ such that the moduli stack is constant in each chamber.

The second variant we discuss is that of
 stable quasimaps with one parametrized component. These are most
interesting in genus zero and the corresponding moduli stacks
are the analogues for quasimaps of the ``graph spaces" $\overline{M}_{0,k}(X\times\PP^1,(\beta,1))$
used prominently by Givental in his proofs of mirror theorems. They are used to extend the definition of Givental's small
$I$-functions, and of the big $I$-functions in \cite{CK}, from toric targets to general GIT quotient targets $\WmodG$.
We expect there is a natural way to express the ``mirror maps" transforming $I$ into the $J$-function of $\WmodG$
via wall-crossing for $\epsilon$-stable quasimaps with one parametrized component.

\subsection{Acknowledgments} During the preparation of the paper
we benefited from conversations with Daewoong Cheong, Max Lieblich, Rahul Pandharipande, and Jason Starr. 
Ciocan-Fontanine thanks KIAS for financial support, excellent
working conditions, and an inspiring research environment.
I.C.-F. was partially supported by the NSF grant DMS-0702871, and the NSA grant
H98230-11-1-0125.
B.K. was supported by the KRF grant 2007-341-C00006. D.M. was supported by a Clay Research Fellowship.

\section{Preliminaries} Throughout the paper we work over the base field $\CC$. We fix a reductive (hence
linearly reductive) complex algebraic
group $\G$.

Recall that a principal $\G$-bundle $P$ over a base
algebraic space $B$ is an algebraic space $P$ with a free (right) $\G$-action and a $\G$-equivariant
map $\pi: P\lra B$ (where $B$ has the trivial action) which is {\' e}tale-locally trivial.

For a scheme $W$ with (right) $\G$-action, one can form the
{\it mixed construction} 
\begin{equation}\label{mixed}P\times_\G W:=[(P\times W)/\G],
\end{equation}
where the quotient is taken with respect to the diagonal action of $\G$.
It is an algebraic space with a morphism 
$$\rho : P\times_\G W\lra B$$
which is  (in the {\' e}tale topology) a locally trivial fibration
with fiber $W$.

\subsection{Principal $\G$-bundles on curves}
By $\fMgk$ we denote the moduli stack of prestable $k$-pointed curves of genus $g$, and by $\Mgk$ we denote  the moduli
stack of stable $k$-pointed curves of genus $g$. Recall that $\fMgk$ is a smooth Artin stack, locally of finite type, while $\Mgk$ is
a smooth Deligne-Mumford stack of finite type, which is proper.
Usually $\fCgk$ and $\cCgk$ will stand for the universal curves over $\fMgk$ and $\Mgk$, respectively. The projection maps from
the universal curves to their base will all be denoted $\pi$; this should not lead to confusion.

We will also consider the relative moduli stack
$$\fBun\stackrel\phi\lra\fMgk$$
of principal $\G$-bundles on the fibers of $\fCgk$.
It is again an Artin stack, locally of finite type, as can be deduced immediately from \cite[Prop. 2.18]{Lieblich} and \cite[Lemma C.5]{AOV}). By \cite{Behrend}, the morphism $\phi$ has smooth geometric fibers. In fact,
$\phi$ is a smooth morphism, hence $\fBun$ is smooth. 
For convenience, we include a direct proof of these facts in the following proposition. See also the recent preprint \cite{Wang}.

%%%%%%%%%%%%%%%%%%%%%%%%%%%%%%%%%%%%%%%%%%%%%%%%%%%
\begin{Prop}\label{bunG}
The stack $\fBun$ is a smooth Artin stack, locally of finite type over $\CC$.
\end{Prop}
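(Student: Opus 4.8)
The plan is to establish the three assertions — Artin, locally of finite type, and smooth — in that order, working relative to the base $\fMgk$, which is already known to be a smooth Artin stack locally of finite type. Since the property of being an Artin stack locally of finite type is local on the base and stable under composition, and since $\phi\colon\fBun\lra\fMgk$ is a relative moduli problem of principal $\G$-bundles on the fibers of the universal curve $\fCgk$, it suffices to treat the relative statement: for any scheme $S$ and any prestable curve $\cC\lra S$ with a choice of section data, the stack $\mathfrak{B}un_\G(\cC/S)$ of principal $\G$-bundles on $\cC/S$ is an Artin stack locally of finite type over $S$. First I would reduce to the case $\G=GL_n$ (or a product of such) by choosing a faithful representation $\G\hookrightarrow GL_n$: a principal $\G$-bundle is the same as a principal $GL_n$-bundle together with a reduction of structure group to $\G$, i.e. a section of an associated $GL_n/\G$-bundle, and $GL_n/\G$ is quasi-projective, so this is a relatively representable (quasi-projective-type) condition. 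For $GL_n$, principal bundles are vector bundles of rank $n$, and the stack of vector bundles on the fibers of a flat projective family of curves is a standard example of an Artin stack locally of finite type — this is where I would cite \cite[Prop. 2.18]{Lieblich} together with \cite[Lemma C.5]{AOV}, exactly as the excerpt indicates, the latter handling the passage from the $GL_n$-statement back to general reductive $\G$.

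For smoothness of $\phi$, the key is deformation theory. I would verify the infinitesimal lifting criterion: given a square-zero extension $S\hookrightarrow S'$ of affine schemes over $\fMgk$ (so the curve $\cC$ extends canonically to $\cC'$ over $S'$ since $\fMgk$ is smooth), and a principal $\G$-bundle $P$ on $\cC$, one must show $P$ extends to a principal $\G$-bundle $P'$ on $\cC'$, and that the extensions form a torsor under an appropriate group. The obstruction to extending $P$ lies in $H^2(\cC, \ad(P)\ot \cI)$, where $\ad(P)=P\times_\G\Lie\G$ is the adjoint bundle and $\cI$ is the square-zero ideal; the space of extensions, when nonempty, is a torsor under $H^1(\cC,\ad(P)\ot\cI)$, and the automorphisms fixing $P$ are $H^0(\cC,\ad(P)\ot\cI)$. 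Since the geometric fibers of $\cC/S$ are curves (dimension $1$), the cohomology group $H^2$ vanishes — here one uses that $\cC\lra S$ is a flat, projective, relative curve, so cohomology and base change applies and $H^2$ of a coherent sheaf on a $1$-dimensional fiber is zero. Hence the obstruction vanishes, every deformation lifts, and $\phi$ is formally smooth; combined with local finite presentation (which follows from locally finite type plus a standard noetherian approximation, or directly), this gives smoothness of $\phi$. Since $\fMgk$ is smooth, $\fBun$ is smooth.

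I expect the main obstacle to be purely expository rather than mathematical: organizing the reduction to $GL_n$ cleanly and making the cohomology-and-base-change step for $H^2$ precise in the generality of prestable (possibly nodal, non-reduced-base) families, so that the vanishing genuinely holds fiberwise and one may conclude $R^2\pi_*(\ad P)=0$ without properness of the base. One should also take care that $\ad(P)$ is locally free (it is, since $\G$ is smooth and $P$ is a $\G$-torsor), so that $\ad(P)\ot\cI$ is a flat family of coherent sheaves over $S$ to which cohomology and base change applies; the vanishing of $H^2$ on the geometric fibers is then immediate from $\dim\le 1$. Everything else — representability, local finiteness of type — is a matter of assembling the cited results, and I would keep that part brief. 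Finally, for completeness I would remark that the relative cotangent complex of $\phi$ is $(R\pi_*\,\ad(P))^\vee[-1]$ concentrated in degrees $[-1,0]$ with the degree-$1$ part vanishing, which re-expresses the smoothness and records the relative dimension as $-\chi(\cC,\ad P) = (g-1)\dim\G$, a fact used implicitly later in the construction of the obstruction theory.
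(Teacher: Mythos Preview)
Your proposal is correct, and the smoothness argument via vanishing of the obstruction in $H^2$ of the adjoint bundle is exactly the one the paper gives. For the Artin and locally-of-finite-type assertions, however, you take a different route from the paper's written proof: you invoke the abstract results of Lieblich and AOV (which the paper in fact mentions as a valid shortcut in the paragraph preceding the proposition), whereas the paper's own proof proceeds by hand---verifying representability, quasicompactness, and separatedness of the diagonal via explicit $\mathrm{Isom}$-functors (first for $GL(r)$, then for general $\G\subset GL(r)$), and then constructing a smooth atlas directly out of Quot schemes over $\fMgk$ together with Hom-schemes into $\cP/\G$. Your approach is shorter and conceptually cleaner; the paper's buys a concrete description of the atlas, which is convenient later in \S\ref{construction} when the moduli of quasimaps is built as an open substack of a Hilbert scheme over $\BunG$. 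Your reduction to $GL_n$ via sections of the associated $GL_n/\G$-bundle is the same mechanism the paper uses in part 1(b) and part 2 of its proof, so the two arguments are really variations on the same theme, differing only in whether one cites or constructs.
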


\begin{proof}
We check the requirements in the definition of an Artin stack.

1. The diagonal map $\Delta$ is representable, quasicompact, and separated:

a) We first prove this for the case $\G=GL(r)$. Let $T$ be a $\CC$-scheme.
Consider two vector bundles $E_1, E_2$ of rank $r$ on a projective scheme $Y/T$ with $\pi _{Y/T}$ the flat structural map $Y\ra T$. 
We let $\mathrm{Isom}_{T}(E_1, E_2)$ be the functor from the $T$-scheme category $\mathrm{Sch} /T$ 
to the set category $\mathrm{Sets}$ sending
$T'/T$ to $\mathrm{Hom}_{T'}(E_1|_{T'}, E_2|_{T'})$.
Then the stack $\mathrm{Isom}_{T}(E_1, E_2)$ over $T$
is representable by a locally closed subscheme of the total space of the vector bundle associated to the locally
free sheaf $(\pi _{Y/T})_*(E_1^\vee\ot E_2 (n))$ for some large enough integer $n$. Hence, $\mathrm{Isom}_{T}(E_1, E_2)$
is a quasicompact separated $T$-scheme.
Now consider two vector bundles $F_1, F_2$ on two families $C_i/S$ of $m$-marked {\em stable} curves over a $\CC$-scheme $S$, respectively.
Notice that $\mathrm{Isom}_S(C_1, C_2)$ is a quasi-projective scheme over $S$.
The stack $\mathrm{Isom}_S(F_1, F_2)$ over $S$ is representable by a quasicompact separated $T$-scheme $\mathrm{Isom}_{T}(\pi _1^*F_1, f^*F_2)$, 
where $T= \mathrm{Isom}_S(C_1, C_2)$,
$Y=C_1\times _S T$, $\pi _1: Y\ra C_1$ is the first projection,
and $f$ is the evaluation map $Y \ra C_2$.  However, in general, $C_i$ are families of prestable curves.
In this case, by adding additional local markings, there is an \'etale covering $\{S_j\}$ of $S$ such that 
$C_{ij}=C_i|_{S_j}$ is a projective scheme over $S_j$. Now glue $\mathrm{Isom}_{S_j}(F_1|_{S_j}, F_2|_{S_j})$ to obtain an algebraic space over $S$, representing
$\mathrm{Isom}_S(F_1, F_2)$. This shows that $\Delta$ is representable, quasicompact, and separated.

b) For a general reductive algebraic group $\G\subset GL(r)$, consider extensions $P_i':= P_i\times _\G GL(r)$, $i=1,2$,
and the associated sections $s_i : C \ra P_i'/\G$.
Then $\mathrm{Isom}_S(P_1, P_2)$ is the closed algebraic subspace of $\mathrm{Isom}_S(P_1', P_2')$ parametrizing
the isomorphisms compatible with $s_i$.

2. There is a smooth surjective morphism from a $\CC$-scheme locally of finite type to $\fBun$:

   Let $S$ be a $\CC$-scheme of finite type.  
   For every pair of positive integers $n, N$ and every family $C/S$ of prestable curves with relatively ample line bundle $\mathcal{O}_C(1)$, consider
   an open subscheme $Z_{N, n}$ of Quot schemes parameterizing the locally free rank $r$ quotients of $\mathcal{O}_C^{\oplus N}(n)$.
   Then when $\G =GL(r)$ the natural forgetful morphism from the disjoint union $Z:=\coprod _{N, n} Z_{N,n}$ 
   to the stack $\fBun (C/S)$ of principal $G$-bundles on $C/S$ is surjective and smooth. 
   For a general reductive algebraic group $\G \subset GL(r)$, we consider
   the natural morphism 
   $$\mathrm{Hom}_{Z} (C\times _S Z , \mathcal{P}/\G)\ra \fBun (C/S),$$
   where $\mathcal{P}$ is the universal $GL(r)$-principal bundle on $C\times _S Z$. 
   Since $GL(r)/\G$ is quasi-projective, and hence
   $\mathcal{P}/\G$ is quasi-projective over $Z$ (see for example \cite[\S 3.6.7]{Sorger}), 
   we see that $\mathrm{Hom}_{Z} (C\times _S Z , \mathcal{P}/\G)$ is
   a quasi-projective scheme over $Z$. 
   Now by this and the fact that $\fMgk$ is an Artin stack, we can build a scheme locally of finite type, smoothly covering $\fBun$.

3. We show the smoothness of $\fBun$. 
It is enough to show that $\fBun$ is formally smooth, i.e., the deformation problem for the corresponding moduli functor is unobstructed: 

Let $S_0$ be the spectrum of a finitely generated $\CC$-algebra $A_0$ and let
$S=\Spec (A)$, with $A$ a square zero extension by a finite $A_0$-module $M$.
 Let  $C_0, C$ be families of prestable curves over $S_0$, respectively over $S$, with $C$ an extension of $C_0$. 
Note that for any principal $\G$ bundle $P'$ on a scheme $Y$,
$\mathrm{Aut}(P') = \mathrm{Hom}_\G (P', \G) = \Gamma (Y, P'\times _\G \G)$, where the $\G$-action on $\G$ is the adjoint action.
From this it is classical to derive that the obstruction to extending a
principal $\G$-bundle on $C_0$ to $C$ lies in
$H^2 (C_0, P\times _\G \mathfrak{g})\ot _{A_0}M$.  The fibers of $C_0/S_0$ are one-dimensional and $S_0$ is affine, hence the $A_0$-module
$H^2 (C_0, P\times _\G \mathfrak{g})$ vanishes. Since $\fMgk$ is a smooth stack over $\CC$, we conclude the proof.
\end{proof}

\subsection{A class of GIT quotients}\label{setup} Let $W=\Spec(A)$ be an affine algebraic variety with an action by the reductive algebraic group $\G$.
There are two natural quotients associated with
this action: the {\it quotient stack} $[W/\G]$, and the {\it affine quotient} $W/_{\mathrm {aff}}\G=\mathrm{Spec}(A^\G)$.

Let $\chi(\G)$
be the character group of $\G$ and
denote by $\Pic^\G(W)$ the group of isomorphism classes of $\G$-linearized line bundles on $W$. Any character
$\xi\in\chi(\G)$ determines a one-dimensional representation $\CC_\xi$ of $\G$, hence
a linearized line bundle $$L_\xi=W\times \CC_\xi$$
in $\Pic^\G(W)$.

Fix once and for all a character $\theta\in\chi(\G)$. Since $\G$ is reductive, the graded algebra
$$S(L_\theta):=\oplus_{n\geq 0}\Gamma(W,L_\theta^{\otimes n})^\G$$
is finitely generated and we have the associated {\it GIT quotient}
$$\WmodG:=W/\!\!/_{\theta}\G:=\mathrm{Proj}(S(L_\theta)),$$
which is a quasiprojective variety, with a projective morphism
$$\WmodG\lra W/_{\mathrm{aff}}\G$$
to the affine quotient (see \cite{King}). The GIT quotient is projective precisely when
$W/_{\mathrm{aff}}\G$ is a point, i.e., when the only $\G$-invariant functions on $W$
are the constant functions.

Let 
$$W^s=W^s(\theta)\;\;\;\;  {\mathrm {and}}\;\;\;\; W^{ss}=W^{ss}(\theta)$$
be the open subsets of  stable (respectively, semistable) points determined 
by the choice of linearization (again, see \cite{King} for the definitions).
From now on the following assumptions will be in force:

$(i)$ $\emptyset\neq W^s=W^{ss}.$

$(ii)$ $W^s$ is nonsingular.

$(iii)$ $\G$ acts freely on $W^s$.

It follows from
Luna's slice theorem that
$W^s\stackrel\rho\lra \WmodG$ is a principal $\G$-bundle in the \'etale topology.
Hence $\WmodG$ is a nonsingular variety, 
which coincides with $[W^s/\G]$, the stack quotient of the stable locus. In particular,
it is naturally an open substack in $[W/\G]$.

Note that $\WmodG$ comes with a (relative) polarization: the line bundle $L_\theta$
descends to a line bundle $$\cO_{\WmodG}(\theta)=W^s\times_\G (L_\theta |_{W^s})$$
which is relatively ample over the affine quotient (the right-hand side is the mixed construction \eqref{mixed}).
Replacing $\theta$ by a positive integer multiple gives the same quotient $\WmodG$, 
but with polarization $\cO_{\WmodG}(m\theta)=\cO_{\WmodG}(\theta)^{\ot m}$. 

\begin{Rmk}\label{orbifold} The assumption $(iii)$ above can in fact be dropped, in which case one considers the smooth Deligne-Mumford open substack
$[W^s/\G] \subset [W/G]$, with coarse moduli space $\WmodG$, as the base of the principal $\G$-bundle. 
This leads to the theory of ``orbifold" (or twisted) stable quasimaps to $[W^s/\G]$, which
can be developed with the same methods we introduce here. For clarity, we decided to treat only the case of manifold targets in this paper. The 
orbifold theory will be discussed in a forthcoming paper, \cite{CCK}.
\end{Rmk}

\subsection{Maps from curves to $[W/\G]$} 
Let $(C,x_1,\dots ,x_k)$ be a prestable pointed curve, i.e., a connected projective curve (of some arithmetic genus $g$),
with at most nodes as singularities, together with $k$ distinct and nonsingular marked points on it.
We will be interested in various moduli spaces of maps from such curves to the quotient stack $[W/\G]$. 
By the definition of quotient stacks, a map $[u]:C\lra [W/\G] $ corresponds to a pair $(P,\tilde{u})$, with 
$$P\lra C$$ 
a principal $\G$-bundle on $C$ and
$$\tilde{u}:P\lra W$$ 
a $\G$-equivariant morphism. Equivalently (and most often), we will consider the data $(P,u)$, with
$$u:C\lra P\times_\G W$$
a section of the fiber bundle $\rho : P\times_\G W\lra C$. Then
$[u]: C \ra [W/\G]$ is obtained as the composite
$$C\stackrel{u}{\ra} P\times _\G W \ra [W/\G].$$
Note also that the composition 
$$C\stackrel{[u]}{\ra} [W/\G]\ra W/_{\mathrm {aff}}\G$$
is always a constant map, since $C$ is projective. 

We denote by $ \Map (C, [W/\G])$ the Artin stack parametrizing such pairs $(P,u)$. 

\subsection{Induced line bundles and the degree of maps} Recall that $L\mapsto [L/\G]$ gives an identification
$\Pic ^\G(W) = \Pic ([W/\G])$.

Let $(C,P,u)$ be as above.
For  a $\G$-equivariant line bundle $L$ on $W$, we have a cartesian diagram
\[ \begin{CD} P\times _\G W @<<< P\times _\G L \\
            @VVV @VVV\\                   
                      [W/\G] @<<< [L/\G] 
                     \end{CD} .\]                
                     Hence we get an induced line bundle $u^*(P\times _\G L) = [u]^*([L/\G])$ on $C$.

 \begin{Def} \label{degree}
The {\it degree} $\beta$ of $(P,u)\in \Map (C, [W/\G])$ 
is the  homomorphism  
$$\beta:\Pic ^{\G}(W) \ra \ZZ,\;\;\;
 \beta(L) =\deg _C(u^*(P\times _\G L)).$$ 
 \end{Def}
 In fact, under the natural map $H_{2}^\G(W)\ra \Hom (\Pic ^\G (W),\ZZ)$, $\beta$ is the image of the class of the equivariant cycle
 $$\begin{CD} P @>>> W\\
 @VVV @.\\
 C @. 
 \end{CD}$$
 given by $(C,P,\tilde{u})$. Here $H_{2}^\G(W)$ denotes the $\G$-equivariant homology.
 As a consequence we will also call $\beta$ the (equivariant homology) {\it class} of $(C,P,u)$.

Finally, we note that any $\G$-equivariant section $t\in \Gamma(W,L)^\G$ induces a section of $[L/\G]$, hence a section $u^*t=[u]^*t$ of
$u^*(P\times _\G L)$.

\subsection{Quotients of vector spaces}\label{vspace1} An important special case is when $W=V$ is a 
finite dimensional $\CC$-vector space equipped with a {\it linear} action, i.e., $\G$ acts via
a representation $\G\lra \mathrm{GL}(V)$. As an algebraic variety, $V$ is an affine space and its Picard group is trivial.
Hence there is an identification
$$\chi(\G)\cong\Pic^\G(V).$$ 

Let $C$ be a nodal curve and let $(P,u)\in \Map (C, [V/\G])$ be as above.
If $\xi$ is a character with associated representation $\CC_\xi$, then there is an isomorphism
of line bundles on $C$
\begin{equation}\label{fiberprod}
u^*(P\times _\G L_\xi)\cong P\times_\G\CC_\xi.
\end{equation}
Therefore in this case the degree 
$$\beta=\beta_P\in \Hom_\ZZ(\chi(\G),\ZZ)$$ is the usual degree of the principal $\G$-bundle on a curve.

\begin{Rmk} Equation \eqref{fiberprod} is a consequence of the following fact, which we will tacitly use from now on.
Given  $\G$-schemes $W$ and $E$ and a map $(C,P,u)$ to $[W/\G]$, let 
$$ \rho :P\times_\G(W\times E)\lra P\times_\G W
$$
be the map induced by the first projection $W\times E\ra W$. Then there is a canonical isomorphism of $C$-schemes
$$P\times_\G E\cong C\times _{P\times_\G W} (P\times_\G(W\times E)),$$
where in the right-hand side we have the fiber product over the section $u:C\ra P\times_\G W$ and the map $\rho$.
\end{Rmk}

The following well-known fact will be used later in the paper. It can be found in many standard texts on Invariant Theory (see e.g. \cite{LeP}, page 94).
\begin{Prop}\label{embedding} Let $W$ be an affine algebraic variety over $\CC$ and let $\G$ be a reductive group acting
on $W$. There exists a finite dimensional vector space $V$ with a linear $\G$-action, and a $\G$-equivariant closed embedding $W\hookrightarrow V$.  
\end{Prop}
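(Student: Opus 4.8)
The plan is to reduce the statement to the standard fact that a finitely generated module over a finitely generated algebra, equipped with a group action, can be embedded into a finite-dimensional submodule of a free module, and then dualize. First I would set $A = \CC[W]$, the coordinate ring of the affine variety $W$, so that the $\G$-action on $W$ gives a rational (right) $\G$-action on $A$. Recall that any rational representation of a reductive (indeed of any algebraic) group is a union of its finite-dimensional $\G$-subrepresentations; this is the key structural input and it holds for $A$ viewed as a $\G$-module. Since $W$ is of finite type, choose finitely many algebra generators $a_1,\dots,a_m$ of $A$. By the local finiteness just mentioned, the $\G$-orbit of each $a_i$ spans a finite-dimensional $\G$-subspace of $A$, and taking the span of all of these produces a single finite-dimensional $\G$-submodule $U\subset A$ that contains all the generators $a_i$.

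Next I would build the equivariant embedding from $U$. Let $V = U^\vee$, the dual vector space, with the contragredient (hence still linear, rational) $\G$-action. The inclusion $U\hookrightarrow A$ corresponds, by the universal property of the symmetric algebra, to a surjective $\G$-equivariant algebra homomorphism $\Sym(U)\twoheadrightarrow A$ — surjective precisely because the image contains the algebra generators $a_i$. Dualizing, and using the identification $\Spec(\Sym(U)) = U^\vee = V$, this surjection of algebras corresponds to a $\G$-equivariant closed immersion $W = \Spec(A)\hookrightarrow V$. Equivariance of the scheme map is immediate from equivariance of the algebra map, and it is a closed immersion exactly because the algebra map is surjective. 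Thus $V$ with its linear $\G$-action is the desired vector space.

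The only point that requires any care — and the one I would flag as the main (minor) obstacle — is the local finiteness of the $\G$-action on $A$: that every element of $A$ lies in a finite-dimensional $\G$-invariant subspace, and that the $\G$-action on such a subspace is algebraic (rational). This is a standard fact about actions of affine algebraic groups on affine schemes, following from the fact that the coaction map $A\to A\otimes\CC[\G]$ is an algebra homomorphism landing in $A\otimes\CC[\G]$, so writing the image of $a$ in terms of a basis of $\CC[\G]$ exhibits finitely many elements of $A$ spanning a $\G$-stable space containing $a$; I would simply cite it (it is in Mumford's GIT or any standard reference, e.g.\ \cite{LeP}) rather than reprove it. Everything else — passing to algebra generators, forming the span, and dualizing the symmetric-algebra surjection — is formal. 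Note that reductivity of $\G$ is not actually needed for this particular statement; it is retained only because it is a blanket hypothesis in the paper.
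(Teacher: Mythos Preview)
Your argument is correct and is precisely the standard proof of this well-known fact. The paper does not actually supply a proof: it simply states the result as well-known and refers the reader to a standard text on invariant theory (\cite{LeP}, page 94), so there is nothing to compare at the level of strategy. Your write-up is essentially a reconstruction of that standard argument, including the key point (local finiteness of rational actions of affine algebraic groups) and the observation that reductivity is not needed here.
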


\section{Quasimaps and stable quasimaps to $\WmodG$} We keep the set-up from the
previous section: we are given an affine variety $W$ with an action of $\G$ linearized by a 
character $\theta\in\chi(\G)$, satisfying the assumptions $(i)-(iii)$ from \S\ref{setup}. 
\subsection{Quasimaps and stable quasimaps} 

Fix integers $k,g\geq0$ and a class $\beta\in \Hom_\ZZ(\Pic^\G(W),\ZZ)$.

\begin{Def}\label{qmap} A $k$-pointed, genus $g$ {\rm {quasimap}} of class $\beta$ to $\WmodG$ consists of the data

$$( C, p_1,\dots , p_k, P, u),$$
where

\begin{itemize}

\item $(C, p_1,\dots , p_k)$ is a connected, at most nodal, $k$-pointed projective curve of genus $g$,

\item $P$ is a principal $\G$-bundle on $C$,

\item $u$ is a section of the induced fiber bundle $P\times_\G W$ 
with fiber $W$ on $C$ such that $(P,u)$ is of class $\beta$.

\end{itemize}

satisfying the following generic nondegeneracy condition:

\begin{align}\label{generic-ndeg}
&{there\; is\; a\; finite\; (possibly\; empty)\; set}\; B\subset C\;
{such\; that}\; \\ &\nonumber {for\; every}\;
p\in C\setminus B\;  {we\; have}\; u(p)\in W^s.\end{align}
\end{Def}

In other words, a quasimap is a map to the quotient stack $[W/\G]$ such that $u$ sends the generic point of
each irreducible component of $C$ to the stable locus $W^s$. The points in $B$ will be called the {\it base points}
of the quasimap.

\begin{Def}\label{stability}The quasimap $( C,p_1,\dots ,p_k, P, u)$ is called {\rm prestable} if the base points are disjoint from the nodes and markings on $C$.

It is called {\rm stable} if it is prestable and the line bundle
$\omega _C(\sum_{i=1}^kp_i) \ot {\mathcal L}^\epsilon $ is ample for every rational number $\epsilon > 0$, where
$${\mathcal L}:=\cL_\theta=u^*(P\times_\G L_\theta)\cong P\times_\G\CC_\theta 
.$$

\end{Def}

These definitions generalize and are motivated by two previous constructions in the literature.

\begin{EG} {\it Stable quotients:}
Consider, for $r \leq n$, the vector space $V = \mathrm{Hom}(\CC ^r, \CC ^n)=\mathrm{Mat}(n\times r)$ of $n\times r$ complex matrices,
equipped with the action of $\G= GL(r)$ via right multiplication and with stability induced by the determinant character.  In this case,
the associated GIT quotient is the Grassmannian $\mathrm{G}(r,n)$ and the definition of a stable quasimap
to $\VmodG$ simply recovers the definition of stable quotients first studied in \cite{MOP}.
It is shown there that quasimaps and stable maps to $\mathrm{G}(r,n)$ give essentially identical numerical invariants.
\end{EG}

\begin{EG} {\it Stable toric quasimaps:}
Suppose we are given a complete nonsingular fan $\Sigma \subset \RR^{n}$ generated
by $l$ one-dimensional integral rays inside an $n$-dimensional lattice.
There is a smooth $n$-dimensional projective toric variety
$X_\Sigma$ associated to the fan, which can be expressed as a GIT quotient of
$V = \CC^{l}$ by the torus $\G = (\CC^*)^{l-n}$.  In this case, we obtain stable toric quasimaps studied in \cite{CK}.
Already in this setting, the relation between stable quasimaps and stable maps is much more complicated, even at the numerical level.
\end{EG}

\begin{Def} An isomorphism between two quasimaps
$$( C,p_1,\dots ,p_k, P, u),$$
and
$$( C',p'_1,\dots ,p'_k, P', u'),$$
consists of an isomorphism $f: C \rightarrow C'$ of the underlying curves, along with an isomorphism
$\sigma: P \rightarrow f^*P'$, such that the markings and the section
 are preserved:
 $$f(p_j) = p'_{j}, \sigma_{W}(u) = f^*(u'),$$
 where $\sigma_W: P\times_\G W \rightarrow P'\times_\G W$ is the isomorphism of fiber bundles induced by $\sigma$.
 \end{Def}

\begin{Def}\label{family} A family of (stable) quasimaps to $\WmodG$ over a base scheme $S$ consists of the data
$$(\pi: \cC \rightarrow S, \{p_i: S \rightarrow \cC\}_{i=1,\dots,k}, \cP, u)$$
where
\begin{itemize}
\item $\pi: \cC \rightarrow S$ is a flat family of curves over $S$, that is a flat proper morphism of relative dimension one,
\item $p_i, i=1,\dots, k$ are sections of $\pi$,
\item $\cP$ is a principal $\G$-bundle on $\cC$,
\item $u:\cC\lra \cP\times_\G W$ is a section
\end{itemize}
such that the restriction to every geometric fiber $\cC_s$ of $\pi$ is a (stable) $k$-pointed quasimap of genus $g$ and class $\beta$.
An isomorphism between two such families $(\cC\rightarrow S,\dots)$ and $(\cC' \rightarrow S, \dots)$
consists of an isomorphism of $S$-schemes $f:\cC \rightarrow \cC'$, and an isomorphism of $\G$-bundles
$\sigma: \cP \rightarrow f^*\cP'$, which preserve the markings and the section.
\end{Def}

Stability of a quasimap $(C, p_1, \dots, p_k, P, u)$ implies immediately the following properties:
\begin{itemize}
\item Every rational component of the underlying curve $C$ has at least two nodal or marked points and $\deg(\cL)>0$ 
on any such component with exactly two special points; in particular, this forces the inequality $2g-2+k\geq 0$.

\item The automorphism group of a stable quasimap is finite and reduced.  Indeed, it suffices to restrict to any rational component $C'$
of $C$ with exactly two special points $q_1$ and $q_2$.  If the automorphism group $\CC^* = \mathrm{Aut}(C', q_1,q_2)$
preserves the section $u$, this forces $u$ to have no base points and $P|_{C'}$ to be trivial.  However, this violates stability.

\end{itemize}

%%%%%%%%%%%%%%%%%%%%%%%%%%%%%%%%%%%%%%%%%%%%%%%%%%%%%%%%%%%%%%%%
%%%%%%%%%%%%%%%%%%%%%%%%%%%%%%%%%%%%%%%%%%%%%%%%%%%%%%%%%%%%%%%%
\subsection{Boundedness}\label{bdd} We prove in this subsection that quasimaps with  fixed domain curve $C$ and class $\beta$, as well as
stable quasimaps with fixed numerical type $(g, k, \beta )$ form bounded families.

We start with the following important observation.

\begin{Lemma}\label{effective} If $(C,P,u)$ is a quasimap, then $\beta (L_\theta) \ge 0$; and $\beta (L_\theta) =0$ if and only if $\beta = 0$,
if and only if the quasimap is a constant map to $\WmodG$.
Furthermore, the same holds for any subcurve $C'$ and the induced quasimap.
\end{Lemma}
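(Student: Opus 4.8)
The plan is to reduce the statement to a positivity property of the line bundle $\cO_{\WmodG}(\theta)$, namely that it is relatively ample over the affine quotient. First I would observe that $\cL_\theta = u^*(P \times_\G L_\theta) = [u]^*\cO_{[W/\G]}(\theta)$, and that the restriction of $\cO_{[W/\G]}(\theta)$ to the open substack $\WmodG = [W^s/\G]$ is precisely $\cO_{\WmodG}(\theta)$, which is relatively ample over $W/_{\mathrm{aff}}\G$. Since $u$ sends the generic point of each irreducible component of $C$ into $W^s$ by the generic nondegeneracy condition \eqref{generic-ndeg}, for each irreducible component $C_i$ of $C$ the image $[u](C_i)$ is a (possibly single-point) complete curve inside $\WmodG$. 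Because $\WmodG$ is projective over the affine quotient and $\cO_{\WmodG}(\theta)$ is relatively ample, any complete curve in $\WmodG$ pairs nonnegatively with $\cO_{\WmodG}(\theta)$, with pairing zero precisely when the curve is contracted to a point of $\WmodG$ (equivalently, maps to a single fiber over the affine quotient, but since it is complete and the fibers are projective this just means it is constant). This gives $\deg_{C_i}(\cL_\theta) \ge 0$ for every $i$, hence $\beta(L_\theta) = \deg_C(\cL_\theta) = \sum_i \deg_{C_i}(\cL_\theta) \ge 0$, and the same argument applied to a subcurve $C'$ gives the last sentence of the lemma.

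For the equivalences: if $\beta = 0$ then trivially $\beta(L_\theta) = 0$. Conversely, if $\beta(L_\theta) = 0$, then since each $\deg_{C_i}(\cL_\theta) \ge 0$ and they sum to zero, each is zero, so by relative ampleness each $[u](C_i)$ is a point; thus $[u]: C \to \WmodG$ is constant, i.e. the quasimap is a constant map (here I use that a prestable curve is connected, and that a morphism from a connected scheme to a variety with every irreducible component contracted is constant). A constant map has $\beta = 0$ because all the induced line bundles $u^*(P \times_\G L)$ are then pulled back from a point, hence trivial, for every $L \in \Pic^\G(W)$. This closes the cycle of implications. Finally the subcurve statement: for any connected subcurve $C' \subset C$, the restriction $(C', P|_{C'}, u|_{C'})$ is again a quasimap to $\WmodG$ (the generic nondegeneracy condition is inherited, since $C'$ is a union of irreducible components of $C$), so everything above applies verbatim with $C$ replaced by $C'$.

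The one point that requires a little care — and which I expect to be the main (minor) obstacle — is making precise the statement that a complete curve in the quasiprojective variety $\WmodG$ has nonnegative degree against the relatively ample $\cO_{\WmodG}(\theta)$, with vanishing iff contracted. The clean way is to use the projective morphism $q: \WmodG \to W/_{\mathrm{aff}}\G = \Spec(A^\G)$: since $C$ is projective and $W/_{\mathrm{aff}}\G$ is affine, $q \circ [u]$ is constant, so $[u]$ factors through a single fiber $q^{-1}(\mathrm{pt})$, which is a genuine projective variety on which $\cO_{\WmodG}(\theta)$ restricts to an ample line bundle. Then standard intersection theory on a projective variety gives $\deg([u]^*\cO(\theta)|_{C_i}) \ge 0$ with equality iff $[u]|_{C_i}$ is constant. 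I would state this reduction explicitly rather than appealing to a general "relative ampleness" slogan, since it is exactly here that the hypothesis that $\WmodG$ is projective over the affine quotient is used.
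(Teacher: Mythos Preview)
Your argument has a genuine gap: you treat $[u]$ as if it were a morphism from $C$ into $\WmodG$, but it is only a morphism to the stack $[W/\G]$. The assertion that ``the image $[u](C_i)$ is a \dots complete curve inside $\WmodG$'' is false whenever $C_i$ carries a base point: at such a point $x$ one has $u(x)\notin P\times_\G W^s$, so $[u](x)\in [W/\G]\setminus [W^s/\G]$ and $[u](x)\notin\WmodG$. Consequently the ampleness of $\cO_{\WmodG}(\theta)$ on the open substack $\WmodG$ tells you nothing directly about $\deg_{C_i}(\cL_\theta)$. Your careful ``reduction to a single fibre of $q$'' paragraph suffers from the same defect: the composite $C\to [W/\G]\to \Spec(A^\G)$ is indeed constant, but $[u]$ still does not factor through $\WmodG\subset[W/\G]$, so you cannot invoke ampleness in that fibre. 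The same gap undermines your proof of the equivalences: from $\deg_{C_i}(\cL_\theta)=0$ you cannot immediately conclude that $[u]|_{C_i}$ is a constant map to $\WmodG$, because $[u]|_{C_i}$ is not a map to $\WmodG$ in the first place.

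The paper's proof handles exactly this issue. After reducing to irreducible $C$, it chooses $m$ with $\cO(m\theta)$ relatively very ample and uses that every $\G$-invariant section $t\in\Gamma(W,L_{m\theta})^\G$ pulls back to a section $u^*t$ of $\cL_{m\theta}$ on $C$. If there are no base points, the quasimap is an honest map $f:C\to\WmodG$ and your ampleness argument is valid. If there is a base point $x$, then every such $u^*t$ vanishes at $x$ (since the sections $t$ cut out the unstable locus), while generic nondegeneracy ensures some $u^*t$ is not identically zero; hence $\cL_{m\theta}$ has a nonzero section with a zero, forcing $m\beta(L_\theta)=\deg(\cL_{m\theta})>0$. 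The missing idea is precisely that base points \emph{contribute positively} to $\beta(L_\theta)$, rather than being invisible to it. Your approach can be salvaged by first extending the rational map $C\setminus B\to\WmodG$ to a regular map $[u_{reg}]$ and then comparing $\cL_\theta$ with $[u_{reg}]^*\cO(\theta)$ (this is Lemma~\ref{degree-splitting} later in the paper), but that comparison is the real content and is absent from your write-up.
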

\begin{proof} It is enough to prove the the lemma when $C$ is irreducible.

Recall that the equivariant line bundle $L_\theta$ on $W$ descends to a 
line bundle $\cO(\theta)$ on $\WmodG$ which is relatively ample over the affine quotient.
Pick $m> 0$ such that $\cO(m\theta)$ is relatively very ample and let 
\begin{equation}\label{projemb}\WmodG\hookrightarrow \PP_{A(W)^\G}((\Gamma(W,L_{m\theta})^\vee)^\G)\end{equation}
be the corresponding embedding over $\Spec(A(W)^\G)$.
If $u$ has image contained in $P\times_\G W^s$, then it corresponds 
to a regular map $f:C\lra\WmodG$ and $\beta (L_\theta) =\deg_C f^*\cO(\theta)$. The claim is clear in this case. Assume now that there is a base-point
$x\in C$ (so that $u(x)$ is unstable). Via composition with the inclusion (\ref{projemb}), we view the quasimap as
a {\it rational} map 
$$C- - \ra \PP_{A(W)^\G}((\Gamma(W,L_{m\theta})^\vee)^\G)$$
with a base-point at $x$.
It follows that every $\G$-equivariant section $t$ of $L_{m\theta}$ must satisfy $u^*t(x)=0$.
On the other hand, since the image of the generic point is stable, the line bundle $\cL_{m\theta} =u^*(P\times_\G L_{m\theta})$
on $C$ has a nonzero section of the form $u^*t$. Hence $m\beta(L_\theta)=\deg(\cL_{m\theta})>0$.
\end{proof}

\begin{Def}\label{eff-semigroup} We will call elements $$\beta\in\Hom_\ZZ(\Pic^\G(W),\ZZ)$$
which are realized as classes of quasimaps (possibly with disconnected domain curve) to $W/\!\!/_\theta\G$
($\G$-equivariant) {\it $L_\theta$-effective classes}. These $L_\theta$-effective classes form a semigroup, 
denoted $\mathrm {Eff}(W,\G,\theta)$. 
\end{Def}
Hence, if $\beta\in \mathrm {Eff}(W,\G,\theta)$, then either $\beta=0$ or $\beta (L_\theta ) >0$.

An immediate consequence of Lemma \ref{effective} is a boundedness result for the underlying curves of stable
quasimaps:

\begin{Cor}\label{boundedness-curve}
The number of irreducible components of the underlying curve of a {\em stable} quasimap to $\VmodG$ is bounded
in terms of $g,k$, and $\beta$ only.
\end{Cor}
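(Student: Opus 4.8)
The plan is to bound separately the number of components on which the degree of $\cL$ is positive and the number of components on which it is zero (the "rational tails/bridges" where $\cL$ has degree zero), then combine the two bounds.

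First I would use Lemma \ref{effective}: for any subcurve $C'$ of the domain, the induced quasimap has class $\beta' \in \mathrm{Eff}(W,\G,\theta)$, and $\beta'(L_\theta) \ge 0$ with equality iff $\beta' = 0$. In particular, writing $C = \bigcup_j C_j$ for the irreducible components and $\beta_j$ for the class of the restriction to $C_j$, we have $\sum_j \beta_j(L_\theta) = \beta(L_\theta)$ with every summand a nonnegative integer. Hence the number of components $C_j$ with $\beta_j(L_\theta) > 0$ is at most $\beta(L_\theta)$, a number depending only on $\beta$.

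Next I would bound the components on which $\beta_j(L_\theta) = 0$. By Lemma \ref{effective} such a component carries a constant quasimap, so $\deg_{C_j}(\cL) = 0$. Stability (as recorded in the bullet points following Definition \ref{family}) forces every rational component to have at least two special points, and a rational component with exactly two special points must have $\deg(\cL) > 0$ on it; therefore a component with $\deg_{C_j}(\cL) = 0$ is either of positive genus, or rational with at least three special points. The positive-genus components contribute at most $g$ to the count (their genera sum to at most $g$). For the rational components with $\ge 3$ special points, I would run the standard combinatorial argument on the dual graph of $C$: contract all "good" components (positive degree, or positive genus) to get a tree-like configuration whose leaves and high-valence vertices are controlled by $k$ (the markings) plus the number of good components, which we have already bounded by $\beta(L_\theta) + g$; a tree in which every vertex has valence $\ge 3$ except for the $\le k$ leaves coming from markings and the $\le \beta(L_\theta)+g$ special vertices has a number of vertices bounded linearly in $k + \beta(L_\theta) + g$.

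I expect the genuine content to be entirely in Lemma \ref{effective} — which is already proved above — together with the stability consequences; the remaining work is the routine dual-graph bookkeeping, essentially identical to the analogous boundedness statements for stable maps. The one point requiring a little care is making sure the argument is applied to $\VmodG$ as stated, i.e. that nothing beyond the inequality $\beta_j(L_\theta) \ge 0$ and the "$\ge 2$ special points on rational components, with strict positivity when exactly $2$" dichotomy is needed; both hold verbatim here. So the main (mild) obstacle is simply organizing the graph combinatorics cleanly, not any new geometric input.
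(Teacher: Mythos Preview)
Your proposal is correct and follows the standard route. The paper itself does not give an argument here but simply cites Corollary~3.1.5 of \cite{CK}; your sketch---bound the positive-$\cL$-degree components by $\beta(L_\theta)$ via Lemma~\ref{effective}, then use stability to force every remaining component to be either of positive genus (at most $g$ of these) or rational with $\ge 3$ special points, and finish with dual-graph combinatorics---is precisely the argument one finds in that reference. One small point worth tightening in your write-up: the dual graph need not be a tree (its first Betti number can be as large as $g$), so rather than ``contract to a tree-like configuration'' it is cleaner to run the direct count $3|V_0|\le \sum_{v\in V_0}(n_v+m_v)\le 2|E|+k\le 2(|V|-1+g)+k$ and then use $|V|\le |V_0|+\beta(L_\theta)+g$ to solve for $|V_0|$.
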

\begin{proof} See Corollary 3.1.5 in \cite{CK}.
\end{proof}

Next we bound quasimaps with given class $\beta$ on a nodal curve. 

\begin{Thm}\label{boundedness-qmap} Let $\beta\in\Hom_\ZZ(\Pic^\G(W),\ZZ)$  
and a nodal curve $C$ be fixed. The
family of quasimaps of class $\beta$ from
$C$ to $\WmodG$ is bounded.
\end{Thm}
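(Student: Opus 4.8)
The plan is to reduce the boundedness of quasimaps of class $\beta$ from a fixed nodal curve $C$ to a boundedness statement for the underlying principal $\G$-bundles and for the sections $u$, using the $\G$-equivariant embedding $W\hookrightarrow V$ from Proposition \ref{embedding}. First I would fix such an embedding $i:W\hookrightarrow V$ into a vector space with linear $\G$-action. Composing $u$ with the induced closed embedding $P\times_\G W\hookrightarrow P\times_\G V$ over $C$, a quasimap $(C,P,u)$ to $\WmodG$ becomes a section $\bar u$ of $P\times_\G V\lra C$ whose image lands in the closed subscheme $P\times_\G W$. By the Remark following \eqref{fiberprod}, giving such a section is the same as giving, for a suitable presentation of $V$ as a $\G$-representation, a tuple of sections of the line bundles (or vector bundles) $P\times_\G\CC_{\xi_j}$ associated to the weights $\xi_j$ of $V$. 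The degree of each such line bundle is determined by $\beta$ via the pairing with $\Pic^\G(W)$ (using that $\chi(\G)\to\Pic^\G(V)$ and the restriction $\Pic^\G(V)\to\Pic^\G(W)$), so once $P$ is fixed these are sections of a fixed line bundle on $C$, of which there is a bounded (finite-dimensional) family.

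The remaining point, and the one I expect to be the main obstacle, is to bound the principal $\G$-bundles $P$ that can occur. The constraint is that $\deg_C(u^*(P\times_\G L))=\beta(L)$ for all $L\in\Pic^\G(W)$ — equivalently, after the embedding, the associated line bundles $P\times_\G\CC_{\xi_j}$ have prescribed degrees on each component of $C$. For $\G=GL(r)$ this says the associated rank-$r$ vector bundle $E=P\times_\G\CC^r$ has fixed determinant degree, but that alone does not bound $E$: one must use the nondegeneracy condition \eqref{generic-ndeg}. The key is that $\bar u$ is a section of $P\times_\G V$ which over the dense open $C\setminus B$ lands in the stable locus $P\times_\G W^s$, i.e. in the locus where the $\G$-action is free with quotient $\WmodG$. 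This forces the bundle $E$ (restricted to each component) to be generically generated in a way that, combined with the fixed degree, bounds it — concretely, over $C\setminus B$ the section gives a map to $\WmodG$, and pulling back a fixed very ample line bundle from a projective embedding of $\WmodG$ as in \eqref{projemb} bounds the relevant numerical invariants; the semistable-equals-stable and freeness assumptions $(i)$–$(iii)$ of \S\ref{setup} are what make this work. For general reductive $\G\subset GL(r)$ one reduces to the $GL(r)$ case via the extension $P\mapsto P\times_\G GL(r)$ together with the section $s:C\to (P\times_\G GL(r))/\G$ as in the proof of Proposition \ref{bunG}, noting that $GL(r)/\G$ is quasiprojective so this data lives in a bounded family once the $GL(r)$-bundle is bounded.

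Having bounded the pairs $(C,P)$ — more precisely, having exhibited a scheme $T$ of finite type and a family of $\G$-bundles over $C\times T$ through which every relevant $P$ arises — I would then observe that the sections $u$ form, over $T$, a bounded family: they are sections of the fixed fiber bundle $\cP\times_\G W\to C\times T$, and since $W$ is affine and hence $\cP\times_\G W$ is closed in a vector bundle $\cP\times_\G V$, the space of such sections sits inside the sections of a vector bundle of fixed degree on the fibers of $C\times T\to T$, which is a bounded (indeed finite-type) family by cohomology-and-base-change. Patching these two boundedness statements gives a finite-type scheme parametrizing all quasimaps of class $\beta$ from $C$, which is the assertion. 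I would expect the write-up to cite \cite{CK} for the vector-space case and to emphasize that the only new ingredient over that reference is the use of Proposition \ref{embedding} to pass from affine $W$ to a linear representation.
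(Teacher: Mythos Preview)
Your outline correctly identifies the three reductions the paper also makes: embed $W$ in a vector space $V$ via Proposition~\ref{embedding}; observe that once the principal bundle $P$ is fixed the sections $u$ live in a finite-dimensional space $\Gamma(C,P\times_\G V)$; so the real content is bounding the set of principal $\G$-bundles $P$ that can occur. Where your proposal falls short is precisely this last step.

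You write that pulling back a very ample line bundle from the embedding \eqref{projemb} ``bounds the relevant numerical invariants''. But the pullback of $\cO(m\theta)$ only recovers the single integer $m\beta(L_\theta)$, which is fixed from the start; it does not bound $P$. To bound a principal $\G$-bundle on a curve one needs to control the degrees of $P\times_\G\CC_\xi$ for a full basis of characters $\xi$ (equivalently, to bound a Harder--Narasimhan type invariant), and a single degree does not do this. Your reduction of general $\G$ to $GL(r)$ via the extension $P\mapsto P\times_\G GL(r)$ also does not work for boundedness: the linearizing character $\theta\in\chi(\G)$ need not extend to a character of $GL(r)$, so the extended bundle carries no quasimap-type constraint to which a $GL(r)$ result could be applied. (That extension trick is used in the paper for representability of $\fBun$, Proposition~\ref{bunG}, where no linearization is in play.) Finally, the citation of \cite{CK} covers only the toric case $\G=(\CC^*)^n$, where principal bundles split as sums of line bundles and boundedness is trivial; the general reductive case is genuinely harder and is \emph{the} new content here.

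The paper fills this gap as follows (Theorem~\ref{boundedness-bundle}). After reducing to smooth irreducible $C$ and connected $\G$, choose a Borel $\bB\supset\bT$ and a $\bB$-reduction $P'$ of $P$ (Lemma~\ref{Breduction}). The key step (Lemma~\ref{BddLemma}) is to produce, via Mumford's Numerical Criterion, a $\QQ$-basis $\{\theta_i\}$ of $\chi(\bT)\otimes\QQ$ lying in a small cone around $\theta$, so that each $\theta_i$ still satisfies $V^s(\bT,\theta)\subset V^{ss}(\bT,\theta_i)$; this forces $\deg(P'\times_\bB\CC_{\theta_i})\ge 0$ for every $i$, while the fixed quantity $\beta(L_\theta)=\sum a_i\deg(P'\times_\bB\CC_{\theta_i})$ with $a_i\ge 0$ then bounds each of these degrees above. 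With the degrees of a basis of characters bounded, Lemma~\ref{Tbounded} bounds the $\bT$-bundle and hence $P$. This argument, modeled on Behrend's and Holla--Narasimhan's treatment of semistable bundles, is the missing idea in your proposal.
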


We begin by making some reductions.

First, by Proposition \ref{embedding}, it suffices to assume that $W=V$ is a vector space. Indeed, given a 
$\G$-equivariant embedding $W\subset V$, 
every quasimap $(C,P,u)$
to $\WmodG$ is also a quasimap to $\VmodG$, where we use the same linearization $\theta\in\chi(\G)$ 
to define the stable and semistable loci on $V$. Note that the class of the quasimap to $\VmodG$ is the image of $\beta$
under the natural map 
$$\Hom(\Pic^{\G}(W),\ZZ)\lra\Hom(\Pic^{\G}(V),\ZZ)=\Hom(\chi(\G),\ZZ), $$
and is in fact just the degree of the underlying principal $\G$-bundle $P$.
Strictly speaking, we are in a slightly more general situation since
the linearized action on $V$ may fail to satisfy the assumptions in $\S 2.2$: there may be strictly semistable points in $V$,
and points in $V^s\setminus W^s$ may have nontrivial stabilizers. However, the proof will only use
that $V^s\neq\emptyset$, and that the representation $\G\lra GL(V)$ has finite kernel. These follow respectively from
the fact that $W^s$ is nonempty, and from the freeness of the $\G$-action on $W^s$ (which implies the kernel is trivial).

Second, notice that the
quasimaps $(C,P,u)$ to $\VmodG$ with fixed projective, connected nodal curve $C$ and fixed principal $\G$-bundle $P$ on it,  are 
parametrized by an open subset of the space of global sections of the induced {\it vector bundle}
$$\cV_P=P\times_\G V$$
on C. Hence, after fixing $C$ and a degree $$\beta\in \Hom(\Pic^\G(V),\ZZ)=\Hom(\chi(\G),\ZZ),$$ 
it suffices to bound the set $S$ of principal $\G$-bundles $P$ on $C$ 
admitting a quasimap $(P, u)$ of class $\beta$ 
to $\VmodG$.

Finally, a principal $\G$-bundle on a nodal curve $C$ is given by a principal bundle $\widetilde{P}$ on the
normalization $\widetilde{C}$, together with identifications of the fibers $\widetilde{P}_x$ and $\widetilde{P}_y$
for each pair of preimages of a node. For each node, these identifications are parametrized by the group $\G$. It
follows that we may assume that the curve $C$ is irreducible and nonsingular.

Hence Theorem \ref{boundedness-qmap} is a consequence of the following result:

\begin{Thm}\label{boundedness-bundle} Let $\beta\in\Hom_\ZZ(\chi(\G),\ZZ)$  
and a smooth projective curve $C$ be fixed. Let $V$ be a vector space with an action of $\G$
via a representation $\G\lra GL(V)$ with finite kernel and let $\theta\in\chi(\G)$ be a character such that $V^s(\G,\theta)\neq\emptyset$.
Then the
family of principal $\G$-bundles $P$ on $C$ of degree $\beta$
such that the vector bundle $\cV_P=P\times_\G V$ admits a section $u$ which sends the generic 
point of $C$ to $V^s(\G,\theta)$ is bounded.
\end{Thm}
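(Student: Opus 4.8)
The plan is to reduce the statement to a boundedness question about the Harder--Narasimhan type of the associated vector bundle $\cV_P$, using the existence of the section $u$ that is generically stable. First I would choose, via Proposition \ref{embedding} applied in the linear setting (or directly), a faithful representation and work with $\cV_P = P\times_\G V$; since the kernel of $\G\lra GL(V)$ is finite, bounding the family of vector bundles $\cV_P$ bounds the family of $\G$-bundles $P$ (the fibers of the map ``associated bundle'' have bounded-dimensional, in fact finite, structure group ambiguity, and one can recover $P$ inside the frame bundle of $\cV_P$ as a reduction of structure group to $\G$, which is a bounded condition once $\cV_P$ ranges in a bounded family and $\deg$ is fixed). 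So it suffices to bound $\cV_P$ in $\mathrm{Coh}(C)$ with fixed rank $\dim V$ and fixed determinant-type discrete invariants determined by $\beta$.

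Next I would extract the numerical constraint coming from $\theta$-stability of the generic point. The character $\theta$ gives a $\G$-equivariant line $\CC_\theta$, hence the line bundle $\cL_\theta = P\times_\G\CC_\theta$ on $C$ of degree $\beta(L_\theta)$, which by Lemma \ref{effective} satisfies $\beta(L_\theta)\geq 0$ and is fixed. By the Hilbert--Mumford numerical criterion for $\theta$-stability of a point $v\in V$: $v\in V^s$ iff for every one-parameter subgroup $\lambda$ of $\G$, the pairing $\langle\theta,\lambda\rangle$ is positive on the ``flow'' of $v$ in the appropriate sense (equivalently, $v$ is not in the unstable locus cut out by the vanishing of all $\theta$-semiinvariants). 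The key geometric translation is: a destabilizing sub-$\G$-bundle of $P$ (a reduction to a parabolic with a dominant-enough character) would force $u$ to land generically in the corresponding $\theta$-unstable stratum of $V$, contradicting generic stability. Concretely, I would decompose $V = \bigoplus_\chi V_\chi$ into weight spaces for a maximal torus and argue that if $P$ had an HN-destabilizing reduction with slope gap $> N$ for $N$ large (depending only on $C$, $V$, $\beta$), then the section $u$, composed with the projection to the associated graded, would be forced to vanish identically on a component or to land in a proper $\G$-invariant closed subset avoiding $V^s$, using that sections of a vector bundle of very negative slope quotient vanish. This is essentially the argument that generically semistable-valued sections bound the instability of $P$; I would phrase it via the finitely many $\G$-invariant closed subsets (Kirwan--Ness strata) of $V\setminus V^s$ and the fact that $u^{-1}$(stratum) is a proper closed subset of $C$.

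Having bounded the HN type of $P$ (equivalently of $\cV_P$) — say all HN slopes lie in an interval $[-M,M]$ with $M = M(C,V,\beta)$ — standard boundedness for sheaves on a fixed smooth projective curve with fixed rank, fixed degree, and bounded HN slopes (e.g. via the Le Potier--Simpson estimates, or just Grothendieck's lemma on bounded families in the curve case) gives that the $\cV_P$ form a bounded family, hence so do the $P$. I would finish by noting that the extra data of the section $u$ does not affect boundedness: once $P$ ranges in a bounded family, $\pi_*\cV_P$ (after a uniform twist) has bounded dimension, so the sections $u$ are parametrized by a bounded family of affine spaces.

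The main obstacle I anticipate is step two: making precise and rigorous the implication ``generic point lands in $V^s$'' $\Rightarrow$ ``the HN slopes of $P$ (relative to the $\G$-structure, not just $\cV_P$) are bounded.'' The subtlety is that the reductive $\G$ may be larger than $GL(V)$-reductions would see, so one must work with reductions of structure group of $P$ to parabolic subgroups $Q\subset\G$ and the associated $\G$-canonical (Behrend/Harder--Narasimhan) filtration, and translate a destabilizing $Q$-reduction with large dominant character into a forced landing of $u$ in the $\theta$-unstable locus. The cleanest route is probably: enumerate the finitely many conjugacy classes of one-parameter subgroups $\lambda$ relevant to the Kirwan stratification of $V$ determined by $\theta$; for each, the corresponding stratum $S_\lambda\subset V$ is $\G$-invariant, locally closed, and its closure misses $V^s$; a $\lambda$-destabilizing reduction of $P$ with slope $\gg 0$ makes the generic value of $u$ lie in $\overline{S_\lambda}$ (because the ``positive-weight'' subbundle for $\lambda$ has positive degree growing with the slope, forcing the complementary projection of $u$ to vanish generically); this contradicts generic $\theta$-stability. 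Bounding the slope then follows by degree bookkeeping against the fixed $\beta(L_\theta)$ and $\deg\cV_P$. I would expect this to be the technical heart of the proof.
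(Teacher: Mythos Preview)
Your outline is a plausible strategy, and you have correctly located its weak point, but the paper takes a quite different and more direct route that avoids Harder--Narasimhan/Behrend reductions and the Kirwan stratification altogether.

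The paper first treats connected $\G$. Instead of the canonical parabolic reduction, it uses the elementary fact (Lemma~\ref{Breduction}) that \emph{any} principal $\G$-bundle on a smooth curve admits \emph{some} Borel reduction $P'$, and then reduces boundedness to showing that the associated $\bT$-bundles $\overline{P'}=P'/\bB_r$ have degrees lying in a finite set (Lemma~\ref{Tbounded}). The key step (Lemma~\ref{BddLemma}) is a GIT variation argument: there is an open neighbourhood $A$ of $\theta$ in $\chi(\bT)\otimes\QQ$ with $V^s(\bT,\theta)\subset V^{ss}(\bT,\xi)$ for all $\xi\in A$, proved via Mumford's numerical criterion. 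Since $V^s(\G,\theta)\subset V^s(\bT,\theta)$, the section $u$ (viewed $\bT$-equivariantly through the $\bB$-reduction and the associated graded of a $\bB$-flag on $V$) produces a nonzero section of $\overline{P'}\times_\bT\CC_\xi$ for every $\xi\in A$, so $\deg(\overline{P'}\times_\bT\CC_\xi)\ge 0$. Choosing a basis $\{\theta_i\}\subset A$ with $\theta=\sum a_i\theta_i$, $a_i\ge 0$, the fixed value $\beta(\theta)=\sum a_i\deg(\overline{P'}\times_\bT\CC_{\theta_i})$ together with nonnegativity bounds each summand, hence all $\bT$-degrees. The disconnected case is handled by passing to the finite \'etale cover $P/\G_\circ\to C$.

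Compared with this, your approach would require making precise the implication ``large Behrend instability $\Rightarrow$ $u$ generically in a Kirwan stratum closure'', and the bookkeeping you sketch (positive-weight subbundle of high degree forces the complementary projection of $u$ to vanish) is not quite right as stated: high degree of a subbundle does not by itself force sections into it; you need the \emph{quotient} to have negative degree, and tying that uniformly to $\theta$-instability via finitely many $\lambda$'s is exactly the delicate step. The paper's trick of perturbing $\theta$ inside $\chi(\bT)\otimes\QQ$ and using an \emph{arbitrary} Borel reduction bypasses this entirely: it converts the problem into a finite system of linear inequalities on $\chi(\bT)$ with one equality constraint, which is immediately bounded. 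Your route might be completable, but it is genuinely harder, and the paper's argument is both shorter and more elementary.
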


\begin{proof} We first prove the Theorem under the additional assumption that $\G$ is {\it connected}, using an argument similar to the one employed by
Behrend  \cite{Behrend} to show boundedness for semistable principal $\G$-bundles
of fixed degree on a curve. Essentially the same proof also appeared later in the paper \cite{Hol-Nar}
of Holla and Narasimhan.
It is based on two general auxiliary results which we state below.

Let $\bT$ be a maximal torus in $\G$ and let $\bB$ be a Borel subgroup of $\G$ with $\bT\subset \bB$. Let
$\bB_{r}$ be the unipotent radical of $\bB$. Note that the composition $\bT\subset \bB\rightarrow \bB/\bB_{r}$ provides an isomorophism $\bT\cong \bB/\bB_r$;
and $\chi (\bT)=\chi (\bB)$ since the unipotent radical has only the trivial character.

\begin{Lemma}\label{Breduction} Every principal $\G$-bundle on a smooth projective curve 
admits a reduction to a principal $\bB$-bundle.  
\end{Lemma}
\begin{proof} This is a corollary of
Springer's result that any principal $\G$-bundle on a projective smooth curve $C$ is locally
trivial in the Zariski topology of $C$ (see \cite[\S 2.11]{Ram}).
\end{proof}

\begin{Lemma}\label{Tbounded} Let $S$ be a set of principal $\G$-bundles on $C$ with
chosen $\bB$-reductions $P'$ of $P$ for each $P\in S$ and consider
the set of 
associated $\bT=\bB/\bB_r$-bundles
\begin{equation}\label{R} R := \{ \overline{P'}=P'/\bB_r : P\in S\}. \end{equation}
If $R$ is bounded, then $S$ is also bounded. Furthermore, $R$ is bounded if the
set of degrees
\begin{equation}\label{degrees}\{d_{\overline{P'}}:\chi(\bT)\lra\ZZ : \overline{P'}\in R  \}\end{equation}
is a finite set. (Recall that $d_{\overline{P'}}$ is defined by $d_{\overline{P'}}(\xi)=\deg(\overline{P'}\times_\bT\CC_\xi)$.)
\end{Lemma}

\begin{proof} See \cite[Proposition 3.1]{Hol-Nar} and \cite[Lemma 3.3]{Hol-Nar}.
\end{proof}

Now let $S$ be the set of principal $\G$-bundles in Theorem \ref{boundedness-bundle} and for
each $P\in S$ pick a $\bB$-reduction $P'$. This is possible by Lemma \ref{Breduction}. The following  Lemma is the
main step in the proof.

\begin{Lemma}\label{BddLemma} There is a $\mathbb{Q}$-basis $\{\theta_i\}$ of $\chi (\bT)\ot \QQ$
such that: \begin{enumerate}

\item For each $i$, and each $P'$, the line bundle 
$$P'\times _\bB \mathbb{C} _{\theta _i} = P'/\bB_r \times _\bT \mathbb{C}_{\theta_i}$$ 
on $C$ has nonnegative degree.

\item  $\theta = \sum a_i \theta_i$ for some nonnegative rational numbers $a_i$,
where $\theta$ is considered as a character of $\bT$.
\end{enumerate}
\end{Lemma}

\begin{proof}
There is an open neighborhood $A$ of $\theta$ in $\chi (\bT)\otimes \mathbb{Q}$ such that 
\begin{equation}\label{inc} V^s(\bT, \theta ) \subset V^{ss}(\bT, \xi ) \end{equation} for every $\xi \in A$.
Here $V^{s}(\bT, \theta )$ (resp.  $V^{ss}(\bT, \xi )$)
denotes the $\theta$-stable (resp. $\xi$-semistable) locus of $V$ for the action of $\bT$ linearized by the given character; and
a point $p\in V$ is said to be $\xi$-semistable if it is so for some
positive integer $\ell$ making $\ell\xi$ an integral character in $\chi (\bT)$.
The existence of $A$ can be seen by Mumford's Numerical Criterion for (semi)stability, in the form Proposition 2.5 in \cite{King}, as follows.

Decompose $V$ as a direct sum of one-dimensional $\bT$-eigenspaces with 
(not necessarily distinct) weights $\delta _j$. 
By the Numerical Criterion, a point $p=(p_1,\dots ,p_{\dim V})\in V$ is $\theta$-stable
if and only if $(a_1p_1,\dots ,a_{\dim V}p_{\dim V})$ is $\theta$-stable for every $a_i\in \mathbb{C} ^*$.
Hence it is enough to check the inclusion \eqref{inc} only at the points $p$ with coordinates $p_j=0$ or $1$. 
For such a point $p$ which is $\theta$-stable,
consider the cone $\sigma (p)$ in  $(\chi (\bT)\otimes \mathbb{Q})^\vee$
consisting of $\lambda$ such that $\langle \delta _j, \lambda \rangle \ge 0$ 
for every $j$ with $p_j=1$. If we let $A$ be the intersection of the (finitely many) dual open cones
$\sigma (p) ^\vee$, then \eqref{inc} follows immediately from the Numerical Criterion.

Via the homomorphism $\G\lra GL(V)$, the image of the Borel subgroup $\bB$ is
contained in a Borel subgroup of $GL(V)$. 
It follows that we have a decomposition
\begin{equation}\label{dec} V = \oplus _{l=1}^{\dim V} V_l/V_{l-1} \end{equation}
 such that $V_l$ is a $\bB$-invariant $l$-dimensional subspace
of $V$. 
The two sides of \eqref{dec} are isomorphic as $\bT$-representation spaces.
Hence from the $\bB$-equivariant map 
\[ \begin{array}{ccccc} P' &\rightarrow & P'\times _\bB\G = P &\rightarrow & V \\
                                   p' &\mapsto &(p',e) & \mapsto & u(p',e) \end{array}\]
given by a quasimap $u$ we obtain a $\bT$-equivariant map 
\begin{equation}\label{Tequiv}P'/\bB_r \rightarrow \oplus _{l=1}^{\dim V} V_l/V_{l-1}\end{equation}
Via composition with \eqref{Tequiv}, any
$\bT$-equivariant map $$s: \oplus _{l=1}^{\dim V} V_l/V_{l-1}=V \rightarrow \mathbb{C}_\xi$$
induces a section $[s]$ of $P'/\bB_r \times _\bT \mathbb{C}_\xi$. 
Since $V^{s}(\G, \theta)\subset V^s(\bT, \theta ) \subset V^{ss}(\bT, \xi)$ for $\xi \in A$, and $u$ is a quasimap,
this section is nonzero at the generic point of $C$. Hence $\deg(P'/\bB_r \times _\bT \mathbb{C}_\xi)\geq 0$ for every
$\xi\in A$.
It is clear that there is a basis $\{\theta_i\}$ of $\chi(\bT)\ot\QQ$ contained in $A$ and satisfying condition $(2)$ of the Lemma.
\end{proof}

The proof of Theorem \ref{boundedness-bundle} for connected $\G$ is now immediate.
First, $\beta(\theta)\geq 0$ by Lemma \ref{effective}.
Next, by Lemma \ref{BddLemma} 
$$\beta(\theta)=\deg (P\times _\G\mathbb{C}_\theta) =\deg (P'\times _\bB\mathbb{C}_{\sum a_i\theta _i} )= \sum a_i \deg  (P'\times _\bB\mathbb{C}_{\theta _i}),$$
for every $P$.
Since all $a_i$'s are nonnegative, we see that 
$$0\le \deg (P'\times _\bB \mathbb{C} _{\theta _i})=\deg( \overline{P'} \times _\bT \mathbb{C} _{\theta _i} )$$ is uniformly bounded above
as $P$ varies.  Since the $\theta_i$'s form a basis
of $\chi(\bT)\ot\QQ$, it follows that the set of degrees in \eqref{degrees} is finite, so we conclude by Lemma \ref{Tbounded}.

We now consider the case of a general reductive group $\G$. Let $\G_\circ$ be the connected component of the identity $e\in \G$. It is
a normal subgroup and $\G/\G_\circ$ is a finite group. Given a principal $\G$-bundle $P$ on $C$ we have a factorization
$$\begin{array}{ccc}P&\stackrel\pi\lra &P/\G_\circ \\ 
\;\;\;\rho\searrow& &\swarrow q\;\;\;\;\;\;\\ 
&C &
\end{array}
$$
with $P\lra P/\G_\circ$ a principal $\G_\circ$-bundle and $P/\G_\circ\stackrel q\lra C$ a principal $\G/\G_\circ$-bundle, so that
$\widetilde{C}:=P/\G_\circ$ is a nonsingular projective curve and $q$ is a finite {\' e}tale map of degree $m:=|\G/\G_\circ|$.
Note that there are only finitely many possibilities for $\widetilde{C}$ when $P$ varies, since there are only finitely many
principal $\G/\G_\circ$-bundles on $C$.

Let $\widetilde{\beta}:\chi(\G_\circ)\ra\ZZ$ be the degree of the $\G_\circ$-principal bundle $P\lra\widetilde{C}$.
Then $\beta(\theta)=m\widetilde{\beta}(\theta)$, where we view $\theta$ as a character of $\G_\circ$ by restriction, so
$\widetilde{\beta}(\theta)$ is fixed, independent on the principal $\G$-bundle $P$ of degree $\beta$. 

Finally, any $\G$-equivariant map $P\lra V$ is also $\G_\circ$-equivariant; furthermore 
$$V^{ss}(\G,\theta)=V^{ss}(\G_\circ,\theta)\; {\mathrm {and}}\; V^{s}(\G,\theta)=V^{s}(\G_\circ,\theta)$$
by \cite[Prop 1.15]{Mumford}. Hence if $P\times_\G V\lra C$ admits a section $u$ as in the Theorem, so does
$P\times_{\G_\circ} V\lra \widetilde{C}$. Noting that the proof in the case of connected group only used that
$\beta(\theta)$ was fixed, we conclude from that case that the sub-family of $P$'s with fixed $\widetilde{C}$
is bounded, so we are done by virtue of the finiteness of the set of $\widetilde{C}$'s.
\end{proof}

For later use in section \ref{vspace} we record the following immediate consequence of Theorem \ref{boundedness-bundle}.
\begin{Cor}\label{regularity} Let $$(\cC\lra S, p_1,\dots, p_k: S\lra\cC, \cP, u)$$ be a family
of quasimaps of genus $g$ and class $\beta$ to $\VmodG$ with $S$ a scheme of finite type and let
$\cO_\cC(1)$ be an $S$-relatively ample line bundle on $\cC$.
There exists an integer $m>\!\!> 0$, depending on $k,g$, and $\beta$, but \underline{independent} on $\cP$ and $u$, such that the restriction
to every geometric fiber of the vector bundle $\cV_\cP(m)=(\cP\times_\G V)\ot\cO_\cC(m)$
on $\cC$ has vanishing higher cohomology.
\end{Cor}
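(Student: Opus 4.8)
Corollary \ref{regularity} follows from Theorem \ref{boundedness-bundle} by a standard boundedness-implies-uniform-regularity argument, and I will organize the proof as follows.

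First I would reduce to a statement about a single bounded family of vector bundles on curves. The data $(\cC\lra S, p_i, \cP, u)$ gives, on each geometric fiber $\cC_s$, a nodal curve together with a principal $\G$-bundle $P_s$ admitting a section of $\cV_{P_s}$ whose generic point lands in $V^s$; that is, $P_s$ lies in the family described in Theorem \ref{boundedness-bundle} (after passing to the normalization and recording the finitely many gluing parameters at the nodes, exactly as in the reductions preceding Theorem \ref{boundedness-bundle}). Since $S$ is of finite type, the base curves $\cC_s$ themselves vary in a bounded family, and the genus is fixed; combining this with Theorem \ref{boundedness-bundle} we get that the set of pairs $(\cC_s, \cV_{P_s})$ is bounded, i.e. is parametrized (up to isomorphism) by a scheme of finite type $T$ carrying a universal curve $\cD\lra T$ with universal vector bundle $\cE$ on $\cD$. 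Here one must be slightly careful that $\cO_\cC(1)$ restricts to an ample line bundle of bounded degree on each fiber, which is automatic since $S$ is of finite type; this ensures the ``polarized'' family $(\cC_s,\cO_{\cC_s}(1),\cV_{P_s})$ is still bounded.

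Next, on the bounded family $(\cD\lra T, \cO_\cD(1), \cE)$ I would invoke the standard cohomological flatness / uniform vanishing statement: for a bounded family of coherent sheaves, flat over the base, on a projective family of curves with a relatively ample line bundle, there is an integer $m_0$ such that $R^{>0}\pi_*(\cF_t(m))=0$ for all $m\ge m_0$ and all $t\in T$. Concretely, one spreads the family out over a finite-type scheme, uses that $\cE$ is a flat family of sheaves with fibers of bounded Castelnuovo--Mumford regularity (curves have cohomological dimension one, so only $R^1$ is at issue), and then Serre vanishing applied fiberwise together with cohomology-and-base-change/semicontinuity gives a single $m_0$ that works over all of $T$; see for instance the discussion of boundedness in terms of regularity in \cite{Hol-Nar} or standard references. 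Pulling $m_0$ back along the classifying map $S\to T$ (which exists because the family over $S$ is, fiberwise, in the bounded family classified by $T$) yields the desired $m$, and by construction it depends only on the discrete invariants $k,g,\beta$ that determine the bounded family, not on $\cP$ or $u$.

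The only real subtlety — and the step I expect to require the most care — is matching up the a priori ``pointwise'' boundedness coming from Theorem \ref{boundedness-bundle} with an actual finite-type parameter scheme $T$ over which everything is a genuine flat family, so that Serre vanishing can be applied in families rather than fiber by fiber. This is routine but not formally immediate: Theorem \ref{boundedness-bundle} as stated bounds principal $\G$-bundles for a \emph{fixed} curve $C$, whereas here the curve varies; one resolves this by noting that over the finite-type base $S$ the curves already form a bounded family, stratifying $S$ so that the topological type of the fibers is constant, and on each stratum combining the fixed-curve statement with the boundedness of the gluing data at the nodes. Once the family is genuinely finite-type, the vanishing is immediate from Serre's theorem and semicontinuity, since we only need to kill $R^1$ of a coherent sheaf on a family of curves.
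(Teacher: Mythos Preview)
Your argument is correct and is precisely the standard elaboration the paper has in mind when it calls this an ``immediate consequence of Theorem \ref{boundedness-bundle}'' without further proof: boundedness of the underlying bundles (combined with the fact that the curves already vary in a bounded family since $S$ is of finite type) gives a finite-type parameter space, and then relative Serre vanishing plus semicontinuity produces the uniform $m$.

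One small point worth tightening: you speak of a ``classifying map $S\to T$'', but no such morphism need exist, since $T$ is only a parameter scheme for a bounded family, not a fine moduli space. This is harmless for the argument --- you only need that each geometric fiber $(\cC_s,\cV_{P_s},\cO_{\cC_s}(1))$ is \emph{isomorphic} to some fiber over $T$, so the single $m$ that kills $H^1$ uniformly over $T$ automatically kills $H^1$ on every $\cC_s$ as well. Alternatively, and perhaps more directly, you can bypass $T$ altogether: apply relative Serre vanishing to the actual coherent sheaf $\cV_\cP$ on the projective morphism $\cC\to S$ to get an $m$ for this particular family, and then invoke Theorem \ref{boundedness-bundle} (plus Corollary \ref{boundedness-curve}) only to observe that the same $m$ would work for any other $(\cP',u')$ with the same numerical invariants, since all such bundles are already parametrized by the finite-type stack $\BunGbeta$. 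Either route is fine and both are what the paper is tacitly relying on.
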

\begin{Rmk} As we already observed, the proof of Theorem \ref{boundedness-bundle} only required that $\beta(L_\theta)$ is fixed. 
The same is true about the proof of Corollary \ref{boundedness-curve}. It follows
from the results of this subsection that for fixed $d\in\ZZ_{\geq 0}$ there are only finitely many $\beta\in \mathrm {Eff}(W,\G,\theta)$
with $\beta(L_\theta)=d$.
\end{Rmk}

%%%%%%%%%%%%%%%%%%%%%%%%%%%%%%%%
%%%%%%%%%%%%%%%%%%%%%%%%%%%%%%%%%%%%%%%%%%%%%%%%%%%%%%%%%%%%%%%%
\section{Moduli of stable quasimaps to $\WmodG$}

\subsection{The moduli stack}
\begin{Def} Let $k, g$ be fixed with $2g-2+k\geq 0$, and let $\beta$ be a $L_\theta$-effective class.
The moduli stack $$\mathrm{Qmap}_{g,k}(\WmodG,\beta)$$ of $k$-pointed quasimaps of genus $g$ and class
$\beta$ to $\WmodG$ is the stack parametrizing families as in Definition \ref{family}

\end{Def}

Strictly speaking, we only have a prestack, but it is straightforward to check the sheaf condition. 

There are natural forgetful morphisms of stacks
$$\QmapW\stackrel\mu\lra \BunG\;\;\;\mathrm{and}\;\;\; \QmapW\stackrel\nu\lra\fMgk.$$

\begin{Thm}\label{Thm1} 
The moduli stack $\mathrm{Qmap}_{g,k}(\WmodG,\beta)$ is a Deligne-Mumford stack of finite type,
with a canonical relative obstruction theory over $\BunG$. If $W$ has only lci singularities, then the obstruction theory is perfect.
If $\WmodG$ is projective, then $\QmapW$ is proper over $\Spec(\CC)$. In general, it has
a natural proper morphism
$$\QmapW\lra \mathrm{Spec}(A(W)^\G).$$

\end{Thm}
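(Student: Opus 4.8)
The plan is to prove Theorem \ref{Thm1} by assembling the separate claims in a logical order, deferring the deep inputs (boundedness, properness of Hilbert schemes, deformation theory of Hilbert schemes) to results already established or standard. First I would construct $\QmapW$ as a stack over $\BunG$: given a family of prestable curves with a principal $\G$-bundle $\cP$, a quasimap section $u$ of $\cP\times_\G W\to\cC$ is the same datum as a section of a Hilbert-scheme-type problem, since the graph of $u$ is a closed subscheme of $\cC\times_\cC(\cP\times_\G W)=\cP\times_\G W$. Using the $\G$-equivariant embedding $W\hookrightarrow V$ from Proposition \ref{embedding}, one replaces $\cP\times_\G W$ by a closed subscheme of the vector bundle $\cV_\cP=\cP\times_\G V$, and the relative $\Proj$ of $\cO_\cC(1)$-twists lets us view $u$ as a point of a relative Hilbert scheme of $\PP(\cV_\cP^\vee\oplus\cO)$ or similar. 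Thus $\mu:\QmapW\to\BunG$ is represented by locally closed subschemes of Hilbert schemes: the conditions ``$u$ lands generically in $W^s$,'' ``base points avoid nodes and markings,'' and the stability inequality are all open (or locally closed) conditions. This already gives that $\QmapW$ is an Artin stack locally of finite type, using Proposition \ref{bunG}.

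Next I would upgrade ``locally of finite type'' to ``finite type.'' This is exactly where Theorem \ref{boundedness-qmap} / Theorem \ref{boundedness-bundle} and Corollary \ref{boundedness-curve} enter: the underlying curve has a bounded number of components (Corollary \ref{boundedness-curve}), the principal $\G$-bundle lies in a bounded family (Theorem \ref{boundedness-bundle}), and then for fixed bundle the sections form an open subset of a finite-dimensional linear space (Corollary \ref{regularity}), so the whole moduli problem is covered by a finite-type scheme. To see it is Deligne-Mumford I invoke the observation recorded right after Definition \ref{family}: a stable quasimap has finite, reduced automorphism group, because stability forces the automorphism-heavy rational bridges either not to exist or to carry a nontrivial bundle/base point; hence the inertia of $\QmapW$ is finite and unramified, so the stack is DM.

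For the obstruction theory, I would use the schematic presentation over $\BunG$: since $\QmapW\to\BunG$ is (locally) an open subscheme of a relative Hilbert scheme, it inherits the relative obstruction theory of the Hilbert scheme, namely $\left(R^\bullet\pi_*\bigl(\cH om(I/I^2,\cO)\bigr)\right)^\vee$ restricted appropriately, which in the smooth case reduces to $\left(R^\bullet\pi_*u^*T_{\cP\times_\G W/\cC}\right)^\vee=\left(R^\bullet\pi_* u^*(\cP\times_\G T_W)\right)^\vee$, a two-term complex since $\pi$ has relative dimension one. When $W$ has only lci singularities the normal sheaf arguments still give a perfect (amplitude $[-1,0]$) complex; I would cite the deformation theory of Hilbert schemes together with the lci hypothesis. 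The detailed verification that this complex is genuinely of perfect amplitude $[-1,0]$ even in the lci case, and that it agrees with the standard obstruction theory on the open locus $M_{g,k}(\WmodG,\beta)$, is the most delicate bookkeeping, but it is a local computation on $W$ near the unstable (base-point) locus. Since $\BunG$ is smooth (Proposition \ref{bunG}), composing with $L_{\BunG}$ produces an absolute perfect obstruction theory and hence a virtual class.

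The hard part I expect is properness, and here is the plan for the last statement. The morphism $q:\QmapW\to\Spec(A(W)^\G)$ is of finite type (by the above) and separated (separatedness follows by a valuative argument using that base points avoid nodes/markings — on a DVR two families agreeing generically have domains agreeing, then the bundles agree, then the sections agree since a base point cannot collide with the limit of markings/nodes). For the valuative criterion of properness over $\Spec(A(W)^\G)$, I take a DVR $R$ with fraction field $K$ and a family over $\Spec K$; after a finite base change I must fill in the central fiber. The key trick, which genuinely uses $W=\Spec A$ \emph{affine}, is: the limit of the principal bundle and a rational limit of the section exist over the stack $[W/\G]$ (because $[W/\G]$ is, up to the reductive quotient, governed by $W$ affine, so maps from the punctured disc extend after possibly modifying the curve by adding rational tails — one uses the properness of the Hilbert scheme of $\cP\times_\G V$ for an auxiliary extension $\cP$ of the bundle, together with semistable reduction for the curve), and then one must arrange that the limit section is still a quasimap, i.e.\ generically stable on every component. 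Components on which the limit degenerates entirely into the unstable locus are killed by the affineness of $W$: a whole rational component mapping to $W\setminus W^s$ would, after passing to $A^\G$-invariants, contradict that $C$ is projective and the image in $\Spec A^\G$ is a point — one blows up/contracts to remove such components. Finally the resulting prestable quasimap is stabilized (contracting the unstable rational bridges, as in the stable-maps case) to land in $\QmapW$; uniqueness of the limit follows from separatedness. The one genuinely new obstacle compared with Kontsevich stable maps is controlling the behavior at base points in the limit — ensuring a base point does not run into a node — which is handled by the standard device of pulling base points onto freshly sprouted rational components before taking the limit.
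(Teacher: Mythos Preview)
Your treatment of the Deligne--Mumford and finite-type assertions, and of the obstruction theory, matches the paper's approach closely: the paper also realizes $\mu$ as an open subscheme of a relative Hilbert scheme (using a $\G$-equivariant projective completion $\overline{W}$ rather than your $\PP(\cV_\cP^\vee\oplus\cO)$, but this is cosmetic), invokes the same boundedness results, and reads off the relative obstruction theory \eqref{relobstruction} from the Hilbert-scheme deformation theory. For the lci case the paper gives the spectral-sequence argument you allude to, showing $\mathbb{H}^2(F^\bullet|_C)=0$ because the cokernel of $F^0\to F^1$ is torsion on each fiber.

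The properness argument, however, has a genuine gap. Your plan is to extend the principal bundle $\cP$ first (you do not say how), take a Hilbert-scheme limit of the section inside $\cP\times_\G V$, and then argue that no component of the limit can land entirely in the unstable locus because ``after passing to $A^\G$-invariants'' this would contradict projectivity of $C$. That last step is incorrect: the map $W\to\Spec A^\G$ is perfectly well-defined on the unstable locus, and a rational component mapping into $W\setminus W^s$ still maps to a single point of the affine quotient, so there is no contradiction. Nothing in your outline prevents the Hilbert-scheme limit from acquiring a component lying wholly in $\cP\times_\G W^{us}$.

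The paper avoids this problem by a different route that you are missing. It first replaces the quasimap by the associated honest map $[u_{reg}]:C\to\WmodG$ (adding the base points as extra markings $y_j$), then uses properness of the Kontsevich moduli space of stable maps to $\WmodG$ --- which is projective over $\Spec A^\G$ --- to produce a limit $(\widehat{C},[\widehat{u}])$. Pulling back the principal bundle $W^s\to\WmodG$ along $[\widehat{u}]$ gives a pair $(\widehat{P},\widehat{u})$ which by construction lands in $W^s$ everywhere. One then contracts the rational tails of $\widehat{C}_0$ that carry none of the original markings, and glues the resulting bundle-plus-section with the original $(P,u)$ on the generic fiber; the two agree away from finitely many smooth points of the total surface. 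The crucial inputs are then (a) the extension of a principal $\G$-bundle on a surface across finitely many smooth points, which is nontrivial and uses reductivity of $\G$ via Matsushima's criterion that $GL(n)/\G$ is affine, and (b) Hartogs' theorem to extend the section, which is where the hypothesis that $W$ is affine is actually used. Neither of these two extension steps appears in your outline, and without the detour through stable maps to $\WmodG$ there is no mechanism forcing generic stability on every component of the limit.
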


We will split the proof of the Theorem over several subsections below.

\subsection {The moduli stack is Deligne-Mumford and of finite type}\label{construction}
Consider the forgetful morphisms 
$$\QmapW\stackrel\mu\lra \BunG\;\;\;\mathrm{and}\;\;\; \QmapW\stackrel\nu\lra\fMgk.$$

By a standard argument it follows from Corollary \ref{boundedness-curve} that the map $\nu$ factors through an open substack
of finite type $\fS\subset\fMgk$. Further, by Theorem \ref{boundedness-bundle}
there is an open substack of finite type
$\BunGbeta\subset\BunG$, fitting in
a commutative diagram
$$\begin{array}{ccc}
\BunGbeta&\subset &\BunG \\
\downarrow & &\downarrow \\
\fS &\subset &\fMgk
\end{array}$$
and such that $\mu$ factors through $\BunGbeta$. This is again standard for $\G=GL_n$: 
the stack $\fS\times_{\fMgk}\BunG$ has a smooth surjective cover $\mathcal{U}$ which is an infinite disjoint union
of quot schemes and Theorem \ref{boundedness-bundle} implies that $\mu$ factors through the image of only finitely
many such quot schemes; we take $\BunGbeta$ to be this image. The case general of a general group $\G$ is 
reduced to $GL_n$ as in the proof of Proposition \ref{bunG}. 

\phantom{X}

{\it Claim}: The morphism $\mu$ is representable and of finite type. 
More precisely, its fibers are open subschemes
in appropriate Hilbert schemes.

\phantom{X}

Granting the claim, it follows that $\QmapW$ is an Artin stack of finite type.
By stability, the automorphism group of a stable quasimap is finite and reduced, so we get in fact a DM stack.

{\it Proof of the Claim.}  Let $\fC\stackrel\pi\lra\BunGbeta$ be the universal curve, with universal
sections $\mathfrak{p}_i,\; i=1,\dots,k$, and let $\fP$ be the universal
principal $\G$-bundle over $\fC$. If $\omega_\fC$ denotes the relative dualizing sheaf, then we have 
line bundles 
$$\mathfrak{L}=\mathfrak{L}_\theta=\fP\times_\G\CC_\theta \;\; \mathrm{and}\;\; \omega_\fC(\sum\mathfrak{p}_i)\otimes\mathfrak{L}^{\ot \epsilon},\; \epsilon\in\QQ$$ 
on $\fC$.
There is an open substack 
$\BunGbeta^\circ\subset\BunGbeta$ obtained by requiring that $\omega_\fC(\sum\mathfrak{p}_i)\otimes\mathfrak{L}^{\ot \epsilon}$ is 
$\pi$-relatively ample for some positive $\epsilon$. 
We will continue to denote  by $\fC\stackrel\pi\lra\BunGbeta^\circ$ and $\fP$ the universal curve and universal principal
$\G$ bundle restricted to this open substack. Note that $\pi$ is now a {\it projective} morphism.

Forming the induced bundle $\fP\times_\G W$ with fiber $W$, we have a
commutative diagram of projections
$$\begin{array}{ccc}\fP\times_\G W&\stackrel\varrho\lra &\fC \\ \\
\;\;\;\;\;q\searrow& &\swarrow\pi \;\;\;\;\;\;\\ \\
&\BunGbeta^\circ &
\end{array}
$$
The fiber of $q$ over a $\CC$-point $(C,p_1,\dots, p_k, P)$ of $\BunGbeta^\circ$ is $P\times_\G W$.

Let $\overline{W}$ be a projective $\G$-equivariant completion of $W$. For each
scheme $T$ with a morphism
$$T\lra\BunGbeta^\circ,$$
consider the pulled-back universal curve $\cC_T\stackrel\pi\lra T$ and universal bundle $P_T$ on $\cC_T$.
A section $u$ of
$$P_T\times_\G \overline{W}\stackrel\varrho\lra\cC_T$$
can be identified with its image inside $P_T\times_\G \overline{W}$.
It is easily checked that the Hilbert polynomial of the image is determined by $g$ and $\beta$,
hence $u$
corresponds to a point in the
(relative) Hilbert scheme
$$\mathrm{Hilb}_{\beta,g}(P_T\times_\G \overline{W}/T)\lra T.$$

In the terminology of \cite[\S 14]{LM}, associating this Hilbert scheme to each $T\lra\BunGbeta$ gives a {\it schematic
local construction of finite type}, so we have an Artin stack with a schematic morphism of finite type
$$\mathfrak{Hilb}_{\beta, g}(\fP\times_\G\overline{W}/\BunGbeta^\circ)\stackrel\mu\lra\BunGbeta^\circ,$$
whose points parametrize closed subschemes $Y$ (with Hilbert polynomials specified as above) in the fibers
of the projection
$$q: \fP\times_\G\overline{W}\lra\BunGbeta^\circ.$$
We now define a substack $\cH$ of $\mathfrak{Hilb}_{\beta, g}(\fP\times_\G\overline{W}/\BunGbeta^\circ)$ by successively imposing the following requirements:
\begin{enumerate}
\item the subscheme $Y$ lies inside $P\times_\G W$
\item $Y$ is the image of a section $u:C\lra P\times_\G W$
\item the section $u$ maps the generic points of components of $C$, the nodes, and the markings into $P\times_\G W^s$
\item stability holds, i.e., $\omega_C(\sum p_i)\otimes\cL^{\ot\epsilon}$ is ample for {\it every} $\epsilon >0$.
\end{enumerate}

Since each of these is an open condition, $\cH$ is an open substack, which is obviously identified with $\QmapW$. The claim is proved.

\begin{Rmk}\label{general} Let $\fMgk([W/\G],\beta)$ denote the moduli stack 
parametrizing maps of class $\beta$ from prestable $k$-pointed curve of genus $g$
to the quotient stack $[W/\G]$. Using the representability of the Hilbert functor in algebraic spaces,
the argument above shows that the natural morphism
$$\mu:\fMgk([W/\G],\beta)\lra\BunG$$ 
is representable and of finite type.
Hence the stack $\fMgk([W/\G],\beta)$ is algebraic and locally of finite type. (As for $\fBun$, this conclusion
also follows immediately from \cite[Prop. 2.18]{Lieblich} and \cite[Lemma C.5]{AOV}.)
Theorem \ref{boundedness-qmap} then shows that imposing the quasimap condition (Definition \ref{qmap})
determines an open substack of $\fMgk([W/\G],\beta)$ which is of finite type over $\fMgk$. 
Finally, imposing in addition a stability condition on quasimaps, such as the one in Definition \ref{stability},
gives an open substack which is Deligne-Mumford and of finite type.

\end{Rmk}

%%%%%%%%%%%%%%%%%%%%%%%%%%%%%%%%%%%%%%%%%%%%%%%%%%%%%%%%
\subsection{Properness}

\begin{Prop}\label{propernessProp} 
The stack $\QmapW$ is proper over the affine quotient $\Spec(A(W)^\G)$.
\end{Prop}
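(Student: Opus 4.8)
The plan is to use the valuative criterion of properness, applied to the morphism $\QmapW \to \Spec(A(W)^\G)$. Since $\QmapW$ is of finite type (already established), it suffices to verify the valuative criterion over a discrete valuation ring: let $R$ be a DVR with fraction field $K$, residue field $\kappa$, and spectrum $\Delta$ with generic point $\eta = \Spec K$ and closed point $0 = \Spec\kappa$. Given a family of stable quasimaps over $\eta$ together with a compatible map $\Delta \to \Spec(A(W)^\G)$, I must show that, after a finite base change $R \subset R'$, the family extends uniquely to a stable quasimap over $\Delta$. The uniqueness part is standard separatedness bookkeeping (using that base points avoid nodes and markings, and that $W^s \to \WmodG$ is a principal bundle, so that on the locus where $u$ is stable the data is literally a map to $\WmodG$); I would dispose of it quickly. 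The substance is existence.

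For existence I would proceed in three stages. \emph{First}, forget the section and extend the pointed curve and the principal bundle: by the properness of $\Mgk$ and semistable reduction one extends the (stabilized) domain curve, and extending the $\G$-bundle over a curve over a DVR (possibly after base change and after modifying the central fiber by blowups at nodes) is classical — this is where one uses that $\fBun$ is smooth over $\fMgk$ with the obstruction to extension living in an $H^2$ that vanishes, exactly as in the proof of Proposition \ref{bunG}. \emph{Second}, the quasimap over $\eta$ composed with the affine quotient map is a constant determined by $\Delta \to \Spec(A(W)^\G)$; this is where affineness of $W$ enters decisively. Using the $\G$-equivariant embedding $W \hookrightarrow V$ of Proposition \ref{embedding}, extend $u$ as a rational section of $\cP\times_\G V$ over the total space; the generic stability over $\eta$ plus the fact that $\WmodG \to W/_{\mathrm{aff}}\G$ is projective, with $\cO(\theta)$ relatively ample, lets me run the standard "spread out, then clear base locus" argument: after tensoring by a suitable power of a relatively ample bundle the section extends, and its locus of indeterminacy in the central fiber is a finite set of points (by Lemma \ref{effective}, degree considerations bound this); the equations cutting out $W$ inside $V$ are preserved by flatness. \emph{Third}, having produced a prestable quasimap over $\Delta$ with possibly non-stable or non-reduced central fiber (rational bridges/tails with trivial $\cL$, or base points colliding with nodes after the blowups), I run the contraction/stabilization procedure: contract the offending rational components and absorb colliding base points by further modification of the central curve, exactly as in \cite{MOP} and \cite{CK}. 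The key point throughout is that because $W$ is affine, a section of $\cP\times_\G W$ over the generic fiber never "runs off to infinity" along the central fiber — the limit automatically lands in $\cP\times_\G W$ rather than in $\cP\times_\G\overline W$ — so no component of the limit is forced to map into the boundary $\overline W \setminus W$.

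The main obstacle I expect is the \emph{third} stage: organizing the modifications of the central fiber so that the resulting limit is simultaneously prestable (base points off nodes and markings) and stable (no rational component with $\le 2$ special points and $\deg\cL = 0$), while keeping track that the class $\beta$ and the compatibility with $\Spec(A(W)^\G)$ are preserved, and that the extension so obtained is the unique one — i.e. that the various choices (order of blowups, which power of the ample bundle) do not matter after the necessary finite base change. This is the part where I would lean most heavily on the analogous arguments in \cite{CK}, adapting them from the toric/vector-space setting to a general affine $W$ by systematically replacing "coordinate hyperplane sections" with "$\G$-equivariant sections $t \in \Gamma(W, L_{m\theta})^\G$" as in the proof of Lemma \ref{effective}.
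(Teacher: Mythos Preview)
Your plan diverges from the paper's at the decisive step, and the divergence hides a real gap. You propose to extend the principal bundle \emph{first}, using only that $\fBun\to\fMgk$ is smooth, and \emph{then} extend the section. But smoothness of $\fBun$ only tells you that \emph{some} extension $\bar P$ of $P$ over the central fiber exists; it does not single one out, and for a generic choice of $\bar P$ the section $u$ will acquire a pole along entire components of $\bar C_0$ rather than at finitely many points. (Already for $\G=\CC^*$, $W=\CC^{n+1}$: extending the line bundle $L_\eta$ arbitrarily to $L$, the sections $u_0,\dots,u_n$ extend only to sections of $L(D)$ for some divisor $D$ supported on the central fiber, so the ``correct'' extension is $L(D)$, not $L$.) Your Stage~2 phrase ``after tensoring by a suitable power of a relatively ample bundle the section extends'' does not repair this: twisting $\cP\times_\G V$ by $\cO_\cC(m)$ changes the target vector bundle, not the $\G$-bundle, and for non-abelian $\G$ there is no evident analogue of ``twist $L$ by $\cO(D)$'' that will absorb poles component-by-component. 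One can make this work via iterated Hecke modifications of $\bar P$ along the central fiber, but that is a substantial argument you have not sketched, and you would still need to check that the resulting limit section is generically stable on every component.

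The paper avoids all of this by reversing the order: it first passes from the quasimap to the honest \emph{map} $[u_{reg}]:C\to\WmodG$ (extending over the base locus, which becomes extra markings $y_j$), then invokes properness of the Kontsevich moduli of stable maps to obtain a limit $[\widehat u]:\widehat C\to\WmodG$. The limiting principal bundle on $\widehat C$ is now \emph{forced}: it is the pullback $\widehat P=[\widehat u]^*(W^s\to\WmodG)$. After contracting the rational tails created by the extra markings $y_j$, one has $(\bar P,\bar u)$ defined on $\bar C$ minus finitely many smooth points; the bundle extends over those points by a short lemma (reflexive sheaves on a regular surface are locally free, plus Matsushima's criterion for general $\G$), and the section extends by Hartogs because $W$ is affine. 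Generic stability on the central fiber is automatic, since away from the contracted loci the limit is literally a map to $\WmodG$. Your Stage~3 stabilization concerns are thus mostly absorbed into the stable-map machinery; the ``main obstacle'' is not where you expected it.
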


\begin{proof} Assume first for simplicity that $A(W)^\G=\CC$, so that $\WmodG$ is projective.
The proof that $\QmapW$ is separated follows from the valuative criterion by 
an argument identical to the one for the toric case, given in section $4.1$ of \cite{CK}
(in turn, that argument is a straightforward modification of the one for stable quotients in \cite{MOP}).
For the more general case here, one needs the statement that if two principal $\G$-bundles on a regular
complex surface $S$ agree outside finitely many points, then they agree on $S$. This follows from Hartogs' theorem.
We should also point out that the ``prestable"
assumption that the base-points of stable quasimaps are away from nodes and markings is crucially used
in the proof.

It remains to show completeness. For this we use the valuative criterion and standard semistable reduction
techniques.
Let $(\Delta, 0)$ be a smooth curve with a point $0$ and let $\Delta ^\circ = \Delta \setminus\{0\}$. 
Let $((C, p_i), P, u)$ be a $\Delta ^\circ$-family of stable quasimaps. It is enough to show that, possibly after replacing $(\Delta, 0)$ 
by making a base-change ramified over $0$, there
is a stable quasimap extension of $((C, p_i), P, u)$ over $\Delta$. Let $B$ denote the base locus of this family with reduced scheme
structure. 
As in \cite{Ful-Pan, MOP, CK}, after possibly shrinking $\Delta$ and making an \' etale base change, we may regard $B$ as additional sections $y_j$.
Then $(C,p_i,y_j)\lra\Delta^\circ$ is a family of prestable curves.
Further, we may assume that $C\lra\Delta^\circ$ has smooth irreducible fibers and that the total space is a smooth surface.

The given family of quasimaps $((C, p_i), P, u)$ gives a 
rational map $$[u]:C - - \ra W/\!\!/G,$$ 
which is regular on $C\setminus B$. Since $\WmodG$ is projective, after shrinking $\Delta$ if necessary, it extends to 
a regular map, denoted by $[u_{reg}]$, on all of $C$. Stability of the quasimaps implies immediately
that $((C,p_i,y_j),[u_{reg}])$ is a $\Delta^\circ$-family of {\it stable maps} to $\WmodG$. We may view it also as a 
family of quasimaps, and as such it has a class $\beta_{reg}\in \Hom(\Pic^\G(W),\ZZ)$; the image of $\beta_{reg}$
in $\Hom(\Pic(\WmodG),\ZZ)$ is the element determined by the usual homology class of the stable map. In general $\beta$ and $\beta_{reg}$
are different. In fact, by Lemma \ref{degree-splitting} from \S 7 below, $\beta -\beta_{reg}$ is $L_\theta$-effective and 
it is nonzero if the base locus is nonempty.

By the properness of the
stable map functor, there is a family of stable maps 
$$(\widehat{C},p_i,y_j)\lra\Delta,\;\;\;\; [\widehat{u}]:\widehat{C}\lra\WmodG$$
extending $((C,p_i,y_j),[u_{reg}])$. The surface $\widehat{C}$ is normal and may have singularities only at
nodes in the central fiber.

Pulling back via $[\widehat{u}]$ the principal $\G$-bundle $W^s\lra\WmodG$,  we obtain a pair
\begin{equation}\label{Phat}(\widehat{P},\widehat{u})\end{equation}
 consisting of a principal $\G$-bundle $\widehat{P}$ on $\widehat{C}$ and an induced section $\widehat{u}:\widehat{C}\lra \widehat{P}\times_\G W$.

The central fiber $\widehat{C}_0$ of $\widehat{C}$ may contain ``rational tails" with respect to the original set of
markings. Specifically, consider all maximal trees $\Gamma_1,\dots,\Gamma_N$ of rational curves in $\widehat{C}_0$ such that each $\Gamma_l$
contains none of the markings $p_i$ and meets the rest of the curve $\overline{(\widehat{C}_0\setminus\Gamma_l)}$ in a single 
point $z_l$. 
Each of these trees can be contracted by a succession of blow-downs of $(-1)$-curves.

Denote by $\bar{C}$ the 
surface obtained by contracting the trees $\Gamma_1,\dots,\Gamma_N$. It is a $\Delta$-family of prestable curves with markings $p_i$,
while some of the additional sections $y_j$ may come together at points $z_l$ in the central fiber where contracted trees were attached.
Note that each of $z_1,\dots, z_N$ is a nonsingular point of $\bar{C}$. 
The pair $(\widehat{P},\widehat{u})$ from \eqref{Phat} restricts to a pair $(P_1,u_1)$ 
on the open subset $$U_1:=\bar{C}\setminus ((\cup_j y_j)\cup\{z_1,\dots,z_N\}).$$
On the other hand, we have the original pair $(P,u)$ on the open subset
$$U_2:=\bar{C}\setminus\bar{C}_0\cong C.$$
By construction, $(P_1,u_1)$ and $(P,u)$ agree on the intersection $U_1\cap U_2$, hence they glue
together to give a pair $(\bar{P}^0,\bar{u}^0)$ defined on all of $\bar{C}$, except at
the points $z_l$ where rational tails were contracted, and at the intersections of the sections $y_j$ with the central fiber.
These points are all away from the nodes and markings in the central fiber.

Now, Lemma \ref{Pext} below gives an extension $\bar{P}$ of $\bar{P}^0$ to $\bar{C}$.  
Furthermore, $\bar{u}^0$ is a section of $\bar{P}\times _G W$ defined over $\bar{C}$  except at finitely many nonsingular points in the
central fiber. By Hartogs' theorem, we can extend the section as well to a section $\bar{u}$ defined on all of $\bar{C}$.
This is the key place where $W$ affine is important for the argument. 

We have constructed a family of prestable quasimaps
$$((\bar{C},p_i),\bar{P},\bar{u}).$$
The limit quasimap will have base-points precisely at the points $z_l$ where rational tails were contracted, 
and at the intersections of the sections $y_j$ with the central fiber. This is because the rational tails
in the stable map limit 
$((\widehat{C}_0,p_i,y_j), [\widehat{u}]|_{\widehat{C}_0})$
all carried either a nonzero part of $\beta_{reg}$, or some of the points $y_j$ (or both), 
while each of the points $y_j$ correspond to a nonzero part of the difference $\beta-\beta_{reg} $.
(For a more precise analysis of the degrees carried by the base points, see the proof of Theorem \ref{Thm3} in \S 7.)
It is now immediate to see that the limit is in fact stable. Indeed, we need to check that the line bundle
$\cL_\theta$ is nontrivial on rational components with only two special points (nodes or markings $p_i$)
in the central fiber. This is clear if the quasimap has a base-point on the component.
However, if such a component doesn't contain any of the points $y_j$ and $z_l$, then by construction the 
restriction of the quasimap to it is a stable map to $\WmodG$, hence non-constant, and $\cL_\theta$ is the pullback of the 
polarization on $\WmodG$. 
From the valuative criterion we conclude that $\QmapW$ is proper.

Assume now that we are in the general situation when we have a projective morphism
$$\WmodG\lra\Spec(A(W)^\G).$$ 
The morphism from $\QmapW$ to the affine quotient is obtained as follows:
First of all, it is well-known that
$$q:W\lra\mathrm{Spec}(A(W)^\G)$$
is a categorical quotient for the $\G$-action on $W$. 
Therefore, for every quasimap
$$(C,P,\tilde{u}:P\lra W)$$
the $\G$-equivariant morphism $\tilde{u}$ induces a map
$$C\lra\mathrm{Spec}(A(W)^\G),$$
which must be constant, since $C$ is projective and $\mathrm{Spec}(A(W)^\G)$ is affine.

More generally, the universal family
$$(\cC\lra\QmapW, \fP,\tilde{u}:\fP\lra W)
$$
induces a map from the universal curve $\cC$ to the affine quotient. By the previous paragraph,
this map is constant on the fibers of the projection $\pi:\cC\lra\QmapW$.
Hence it descends to our desired morphism
$$\eta:\QmapW\lra \mathrm{Spec}(A(W)^\G).$$

The properness of $\eta$ now follows from the valuative criterion, by the relative version of argument given above in the
absolute case.
 \end{proof}
 
\begin{Lemma}\label{Pext}
 Let $S$ be  an algebraic surface (irreducible and reduced) and let $P^0$ be a principal $\G$-bundle on 
 $S$ minus a finite set of nonsingular points. 
 Then $P^0$ extends to a principal $\G$-bundle on $S$.
 \end{Lemma}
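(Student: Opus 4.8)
The plan is to reduce the extension problem for a general reductive $\G$ to the case $\G = GL(r)$, and then to handle $GL(r)$ by extending the associated locally free sheaf across a codimension-two locus. First I would observe that since $\G$ is reductive we may, by Proposition \ref{embedding} applied to the $\G$-variety $\G$ itself (or simply by picking a faithful representation), embed $\G \hookrightarrow GL(r)$ as a closed subgroup. Given the principal $\G$-bundle $P^0$ on $S \setminus F$, where $F$ is a finite set of nonsingular points of $S$, form the extension $P_0' := P^0 \times_\G GL(r)$, a principal $GL(r)$-bundle on $S \setminus F$. If I can extend $P_0'$ to a principal $GL(r)$-bundle $P'$ on all of $S$, then I recover $P^0$ as follows: the section $s^0 : S\setminus F \to P_0'/\G$ determined by the reduction of structure group extends over $F$ because $GL(r)/\G$ is quasi-projective (hence $P'/\G \to S$ is quasi-projective over $S$) and $F$ consists of smooth points of the surface $S$, so a rational section of a quasi-projective $S$-scheme, regular in codimension one on a smooth surface, extends by the valuative criterion of properness applied componentwise — more carefully, one uses that $F$ has codimension $2$ and normality of $S$ near $F$ to extend the map $S \dashrightarrow P'/\G$. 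The extended section cuts out the desired principal $\G$-subbundle $P \subset P'$.

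For the group $GL(r)$, a principal bundle is the same data as a rank-$r$ vector bundle, i.e. a locally free sheaf $\cE^0$ of rank $r$ on $S \setminus F$. The key step is then: a locally free sheaf on the complement of finitely many nonsingular points of an irreducible reduced surface $S$ extends to a locally free sheaf on $S$. Let $j : S\setminus F \hookrightarrow S$ be the open inclusion and set $\cE := j_* \cE^0$. I claim $\cE$ is coherent and locally free of rank $r$. Coherence of $j_*\cE^0$ follows because $F$ has codimension $\ge 2$ in $S$ (so $j_*$ of a coherent sheaf is coherent — here one uses that $S$ is, locally near each point of $F$, a normal, indeed regular, surface, which guarantees the relevant depth/$S_2$ property). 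Since $S$ is smooth at each point $p \in F$, the local ring $\cO_{S,p}$ is a regular local ring of dimension $2$; a reflexive sheaf over a regular (or just normal) two-dimensional local ring that is locally free on the punctured spectrum is in fact free — this is the classical fact that a reflexive module of finite rank over a regular local ring of dimension $2$ is free. Away from $F$, $\cE$ agrees with $\cE^0$ and is locally free by hypothesis; at points of $F$, smoothness plus reflexivity of $j_*$ gives local freeness. Hence $\cE$ is a vector bundle on $S$ extending $\cE^0$, equivalently a $GL(r)$-bundle extending $P_0'$.

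I expect the main obstacle to be the bookkeeping in the descent from $GL(r)$ back to $\G$: one must check that the reduction section $s^0 : S\setminus F \to P'/\G$ really does extend over the finite set $F$. The clean way is to note that $P'/\G \to S$ is separated (indeed quasi-projective, by the quasi-projectivity of $GL(r)/\G$ and the argument in \cite[\S 3.6.7]{Sorger} used already in the proof of Proposition \ref{bunG}), that $\cO_{S,p}$ is a two-dimensional regular local ring for $p \in F$, and that any $S$-morphism from $S \setminus F$ to a separated $S$-scheme, with $F$ of codimension $\ge 2$ and $S$ normal, extends uniquely — this is an application of the algebraic Hartogs phenomenon (extension of morphisms across codimension-two subsets of normal schemes into separated targets). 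Once the section extends, its image is a closed $\G$-subbundle of $P'$, and restricting to $S \setminus F$ recovers $P^0$ by construction. A secondary point to be careful about is that $S$ is only assumed irreducible and reduced, not normal globally — but this is harmless, since the extension is a local question near the points of $F$, and there $S$ is assumed nonsingular, hence regular and in particular normal; elsewhere $\cE = \cE^0$ already.
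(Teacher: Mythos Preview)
Your overall strategy matches the paper's: reduce to $GL(r)$ via a faithful representation, extend the associated vector bundle using that reflexive sheaves on a regular surface are locally free, then recover the $\G$-reduction by extending the section of $P'/\G$. The vector-bundle step is fine (the paper phrases it as ``extend to a coherent sheaf, take the double dual'', which is cleaner than arguing directly that $j_*\cE^0$ is coherent, but your version works too).

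The genuine gap is in the last step. You assert that a section of $P'/\G \to S$ defined on $S\setminus F$ extends over $F$ because $GL(r)/\G$ is quasi-projective (hence $P'/\G \to S$ is separated) and $F$ has codimension two on a normal surface. This is false as stated: morphisms from a normal surface minus a point to a merely separated or quasi-projective target need not extend across that point. The standard counterexample is $\mathbb{A}^2\setminus\{0\}\to\PP^1$, $(x,y)\mapsto[x:y]$. Neither the valuative criterion of properness (which requires a \emph{proper} target) nor ``algebraic Hartogs'' (which extends functions, hence morphisms to \emph{affine} targets) applies to quasi-projective fibers in general.

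What saves the argument, and what the paper invokes explicitly, is Matsushima's criterion: since $\G$ is reductive, the homogeneous space $GL(r)/\G$ is \emph{affine}. Then $P'/\G\to S$ is an affine morphism, and Hartogs' theorem for functions (applied to a local trivialization, or equivalently to the coordinate ring of the fiber) does extend the section over the codimension-two locus $F$. So your outline becomes correct once you replace ``quasi-projective'' by ``affine'' and cite Matsushima; without that, the reduction step does not go through.
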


 \begin{proof}
Replacing $S$ by its nonsingular locus if necessary, we may assume that it is nonsingular.
Let $U$ denote the complement of a finite set over which $P^0$ is defined.

In the case $\G = GL(n)$, we can replace $P^0$ with a rank $n$ vector bundle $E^0$ over $U$.
It suffices to show that this extends to $S$.  There exists a rank $n$ coherent sheaf $E$ on $S$ extending $E^0$;
by replacing $E$ with its double-dual, we can assume $E$ is reflexive.
Since $S$ is two-dimensional and regular, this forces $E$ to be locally free, giving the extension of $P^0$.

In the case of general $\G$, we choose some embedding $\G \subset GL(n)$.
Let $Q^0 = P^0\times_{\G} GL(n)$ be the associated $GL(n)$-bundle over $U$.
By the last paragraph, $Q^0$ extends to a $GL(n)$-bundle $Q$ over all of $S$.
 Reductions of $Q$ to a principal $\G$-bundle correspond to sections of
 the associated $GL(n)/\G$-bundle $Q/\G$ over $S$.  Since $\G$ is reductive,
 it follows from Matsushima's criterion \cite{matsushima} that the homogeneous space
$GL(n)/\G$ is affine.  Moreover, by assumption, $Q/\G$ admits a section over $U$.
Thus, by Hartogs' theorem, this section extends over $S$.
 \end{proof}

\begin{Rmk}
Note that the statement of the lemma is false if $\G$ is not reductive: for any linear algebraic group with nontrivial unipotent radical, there exist principal $\G$-bundles over $U = \CC^2\setminus \{0\}$ which do not extend to the entire plane. Indeed, for $\G = \G_a$, principal bundles over $U$ are classified by $H^1(U, \mathcal{O}_U)\ne 0$.  Since any unipotent $\G$ is built from $\G_a$ by taking successive extensions, the same calculation yields the statement for unipotent groups.  Finally if $\G$ has nontrivial unipotent radical $\mathbf{U}$, then the long exact sequence for group cohomology shows that the natural map $H^{1}(U,\mathbf{U}) \rightarrow H^{1}(U,\G)$ is injective and any elements in its image cannot be extended to the plane. 
\end{Rmk}

\subsection{Obstruction theory} Let
$$\cC \stackrel\pi\lra\QmapW$$ be the universal curve, with universal principal $\G$-bundle $\mathfrak{P}$ on it.
($\mathfrak{P}$ is the pull-back of the universal bundle on the universal curve over $\BunG$.) Let
$$\varrho : \mathfrak{P}\times_\G W\lra\cC$$ be the induced bundle with fiber $W$ and let
$$u:\cC\lra \mathfrak{P}\times_\G W$$ be the universal section.

Recall from the construction in $\S\ref{construction}$ that the fibers of the map 
$$\mu:\QmapW\lra\BunG$$ are open in Hilbert schemes, with points corresponding to graphs of the sections $u$.
It follows that there is a {\it canonical} relative obstruction theory for $\QmapW$ over $\BunG$ given by
\begin{equation}\label{relobstruction}
E_\mu^\bullet:=\left (R^\bullet\pi_*({\cH}om(\mathbb{L}_u,\cO_\cC)[1])\right )^\vee
\end{equation}
where $\mathbb{L}_u$ is the relative cotangent complex of $u$.

\begin{Prop}\label{smooth} If $W$ is smooth, then the relative obstruction theory over $\BunG$ is perfect.
\end{Prop}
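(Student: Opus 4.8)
The plan is to analyze the relative cotangent complex $\mathbb{L}_u$ of the universal section $u:\cC\lra\mathfrak{P}\times_\G W$ and show that, when $W$ is smooth, the two-term complex $E_\mu^\bullet$ defined in \eqref{relobstruction} consists of vector bundles in degrees $[-1,0]$. First I would observe that, since $\varrho:\mathfrak{P}\times_\G W\lra\cC$ is smooth when $W$ is smooth (it is \'etale-locally the projection $\cC\times W\lra\cC$), the section $u$ is a regular embedding of $\cC$ as a closed subscheme of the smooth $\cC$-scheme $\mathfrak{P}\times_\G W$. Therefore $\mathbb{L}_u$ is concentrated in a single degree: it is the conormal sheaf $N_u^\vee=\mathcal{I}/\mathcal{I}^2$ placed in degree $0$, or equivalently $\mathbb{L}_u\cong u^*T_{\varrho}[1]^\vee$ shifted appropriately, where $T_\varrho$ is the relative tangent bundle of $\varrho$. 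Concretely, $u^*T_\varrho\cong u^*(\mathfrak{P}\times_\G TW)$ is a vector bundle on $\cC$ of rank $\dim W-\dim\G$ (rank equal to $\dim(\WmodG)$ where $u$ lands in the stable locus, but in general rank $\dim W - \dim \G$ using the action; one must be slightly careful, but $TW$ descends to give the relative tangent bundle of $\varrho$ on the fiber product). Thus ${\cH}om(\mathbb{L}_u,\cO_\cC)$ is the vector bundle $u^*T_\varrho$ sitting in degree $0$, and ${\cH}om(\mathbb{L}_u,\cO_\cC)[1]$ is this bundle in degree $-1$.

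Next I would compute $E_\mu^\bullet=(R^\bullet\pi_*(u^*T_\varrho[1]))^\vee$. Since $\pi:\cC\lra\QmapW$ is a family of nodal (hence one-dimensional, Gorenstein) curves, and $u^*T_\varrho$ is a vector bundle on $\cC$, the complex $R^\bullet\pi_*(u^*T_\varrho)$ is (locally, and after the standard argument via a two-term resolution $\pi_*$-acyclic complex) represented by a two-term complex of vector bundles in degrees $[0,1]$; the cohomology sheaves $R^0\pi_*(u^*T_\varrho)$ and $R^1\pi_*(u^*T_\varrho)$ vanish above degree $1$ because the fibers are curves. Shifting by $[1]$ and dualizing, $E_\mu^\bullet$ is represented by a complex of vector bundles in degrees $[-1,0]$. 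This is exactly the condition that the relative obstruction theory is perfect (of amplitude $[-1,0]$). That $E_\mu^\bullet$ genuinely is an obstruction theory — i.e. that there is a morphism $E_\mu^\bullet\lra\mathbb{L}_{\QmapW/\BunG}$ satisfying the usual $h^0$-iso, $h^{-1}$-epi conditions — is the standard deformation theory of Hilbert schemes (or of sections), valid without any smoothness hypothesis on $W$; this was already asserted in the text preceding the proposition, so I would only need to invoke it.

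The one genuine point requiring care — and the step I expect to be the main obstacle — is verifying that when $W$ is merely smooth (not necessarily with the section $u$ landing in $W^s$) the relative cotangent complex $\mathbb{L}_u$ is still a single vector bundle in degree $0$, i.e. that $u$ is a regular (lci) closed immersion into $\mathfrak{P}\times_\G W$. This follows because $\mathfrak{P}\times_\G W\lra\cC$ is a smooth morphism of relative dimension $\dim W-\dim\G$ (smoothness of $W$ plus freeness is not needed here, only that $W$ is smooth, since the mixed construction with a principal bundle preserves smoothness of the fiber), and any section of a smooth morphism is automatically a regular immersion: its normal sheaf is locally free, identified with the restriction of the relative tangent bundle $T_\varrho$, and $\mathbb{L}_u\cong N_u^\vee[0]$. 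Once this is in place, everything else is formal. I would also remark in passing that this is where smoothness of $W$ is essential: if $W$ has singularities then $\varrho$ is not smooth, $u$ need not be a regular immersion, $\mathbb{L}_u$ acquires an $h^{-1}$ term, and $E_\mu^\bullet$ can have amplitude outside $[-1,0]$ — this is precisely what the more delicate lci case in the following proposition is designed to handle.
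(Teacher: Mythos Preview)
Your approach is essentially the same as the paper's: $W$ smooth makes $\varrho$ smooth, hence the section $u$ is a regular (lci) closed immersion, so $\mathbb{L}_u$ is a single locally free sheaf; then $R\pi_*$ of a vector bundle on a family of curves is perfect of amplitude $[0,1]$, and the shifts/duals bring $E_\mu^\bullet$ into $[-1,0]$.

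Two bookkeeping slips to fix. First, for a regular closed immersion the cotangent complex is the conormal bundle placed in degree $-1$, not degree $0$: $\mathbb{L}_u\cong N_u^\vee[1]$. With this correction, $\cH om(\mathbb{L}_u,\cO_\cC)[1]\cong u^*T_\varrho$ sits in degree $0$ (not $-1$), so $R\pi_*$ lands in $[0,1]$ and the single dual takes you to $[-1,0]$ --- no extra shift needed. Your final answer is right, but the intermediate degree tracking is off. Second, the fiber of $\varrho:\mathfrak{P}\times_\G W\to\cC$ is $W$ itself, so $T_\varrho$ has rank $\dim W$, not $\dim W-\dim\G$; you are conflating this with the tangent bundle of $\WmodG$. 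The rank plays no role in the argument, but the parenthetical remark is incorrect and should be deleted.
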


\begin{proof} When $W$ is smooth the morphism $u$ is an lci closed embedding, hence its cotangent complex is
quasi-isomprphic to a single locally
free sheaf in degree $-1$ (the conormal bundle of the section). Since $\pi$ is projective of relative dimension 1, the proposition follows.
\end{proof}

\subsection{Perfect obstruction theory when $W$ has lci singularities}

Note that since $u$ is a section of $\varrho$, we have
$$\mathbb{L}_u\cong u^*\mathbb{L}_\varrho[1],$$ hence we can rewrite the relative obstruction theory as
$$\left (R^\bullet\pi_*(u^*\mathbb{R}T_\varrho)\right )^\vee,$$
with $\mathbb{R}T_\varrho$ the relative tangent complex of $\varrho$.

Assume now that $W$ has lci singularities only (contained in $W\setminus W^s$).
Choose a nonsingular $\G$-variety $V$ and
a $\G$-equivariant embedding $W\hookrightarrow V$ corresponding to an ideal
sheaf $I\subset \cO_V$. For example, we can take $V$ to be a vector space, as in 
Proposition \ref{embedding}.
The tangent complex of $W$ is then the two-term complex of vector bundles
$$\mathbb{R}T_W=[T_V|_W\lra (I/I^2)^\vee]$$
up to quasi-isomorphism.

We have an induced complex of vector bundles
$$[\mathfrak{P}\times_\G T_V|_W\lra \mathfrak{P}\times_\G (I/I^2)^\vee]$$
on $\mathfrak{P}\times_\G W$ which is quasi-isomorphic to $\mathbb{R}T_\varrho$.
Denote by
$$F^\bullet=[F^0\lra F^1]$$
its pull-back to $\cC$ via $u$.

\begin{Lemma} Let $C$ be a geometric fiber of $\cC\stackrel\pi\lra \QmapW$. Then
$H^1(F^\bullet|_C)$ is a torsion sheaf on C.
\end{Lemma}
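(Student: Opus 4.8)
I want to show that $H^1(F^\bullet|_C)$ is supported on a finite set of points of $C$, namely on the base points of the quasimap. The complex $F^\bullet = [F^0 \to F^1]$ is the pullback via $u$ of $[\mathfrak{P}\times_\G T_V|_W \to \mathfrak{P}\times_\G (I/I^2)^\vee]$, so away from the base points $u$ lands in $\mathfrak{P}\times_\G W^s$, where $W^s$ is smooth by assumption $(ii)$. The key observation is that smoothness of $W$ along $W^s$ makes the two-term complex $\mathbb{R}T_W$ exact in degree $1$ over $W^s$: since $W^s$ is a local complete intersection (indeed smooth) inside $V$, the conormal sequence $0 \to I/I^2 \to \Omega_V|_{W^s} \to \Omega_{W^s} \to 0$ is exact on the left, so dualizing gives that $T_V|_{W^s} \to (I/I^2)^\vee$ is surjective, i.e. $\mathcal{H}^1(\mathbb{R}T_W)|_{W^s} = 0$. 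Pulling back by $u$ and restricting to $C$, the map $F^0|_C \to F^1|_C$ is therefore surjective at every point of $C$ outside the (finite) base locus $B$, so $H^1(F^\bullet|_C)$ is supported on $B$, which is finite by the definition of quasimap.

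\smallskip

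\textbf{Steps in order.} First I would fix a geometric fiber $C$ and recall that, by Definition \ref{qmap}, there is a finite set $B \subset C$ with $u(C\setminus B) \subset \mathfrak{P}\times_\G W^s$. Second, I would establish the purely local algebraic fact that if $W\hookrightarrow V$ is a closed embedding of a smooth variety $W^s$ (the relevant open part) into a smooth $V$, then $T_V|_{W^s} \to (I/I^2)^\vee$ is surjective; this is immediate from left-exactness of the conormal sequence for a regular embedding (or just for an embedding of smooth varieties). Third, I would note this statement is local on $W$, is compatible with the $\G$-action, hence descends/passes to the induced bundles: the map $\mathfrak{P}\times_\G T_V|_W \to \mathfrak{P}\times_\G (I/I^2)^\vee$ is surjective over $\mathfrak{P}\times_\G W^s$. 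Fourth, pulling back along $u$ and restricting to $C$, the sheaf map $F^0|_C \to F^1|_C$ is surjective on the open set $C\setminus B$; since cokernel formation is right-exact and commutes with restriction, $H^1(F^\bullet|_C) = \mathrm{coker}(F^0|_C \to F^1|_C)$ is a sheaf supported on $B$. Finally, a coherent sheaf on a curve $C$ supported on a finite set is torsion, which is the claim.

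\smallskip

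\textbf{Main obstacle.} The only genuine subtlety is making sure the pullback $F^\bullet|_C$ really does compute the cokernel correctly — i.e. that we are restricting a two-term complex of \emph{vector bundles}, so that $\mathcal{H}^1$ of the restriction is literally the cokernel of the restricted map and there is no higher $\mathrm{Tor}$ contribution. Since $F^0$ and $F^1$ are locally free by construction (pullbacks of the vector bundles $\mathfrak{P}\times_\G T_V|_W$ and $\mathfrak{P}\times_\G(I/I^2)^\vee$ along $u$, then restricted to $C$), this is automatic, and $H^1$ of a two-term complex of bundles is always just the cokernel of the bundle map. One should also be slightly careful that ``$W$ has lci singularities contained in $W\setminus W^s$'' is exactly what is needed: it guarantees $W^s$ is smooth, and smoothness of $W^s$ inside the smooth $V$ is all the argument uses, so the hypothesis is not only sufficient but, as the paper remarks, essentially optimal. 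Beyond this, the argument is entirely formal.
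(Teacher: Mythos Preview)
Your proposal is correct and takes essentially the same approach as the paper: both argue that $H^1(F^\bullet|_C)$ vanishes on $C\setminus B$ because over $W^s$ the two-term complex $\mathbb{R}T_W$ is quasi-isomorphic to the single vector bundle $T_{W^s}$ in degree $0$, hence the support of $H^1$ is contained in the finite base locus $B$. The paper states this in one line by invoking the quasi-isomorphism $F^\bullet|_{C\setminus B}\simeq u|_{C\setminus B}^*T_{W^s}$, while you unpack the same fact via the exactness of the conormal sequence for the smooth embedding $W^s\hookrightarrow V$ and the resulting surjectivity of $T_V|_{W^s}\to (I/I^2)^\vee$; these are equivalent formulations of the same observation.
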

\begin{proof} Let $B\subset C$ be the set of base points of the stable quasimap. On $C\setminus B$ the complex $F^\bullet$ is quasi-isomorphic to
the pull-back of the tangent bundle of the (smooth) stable locus $W^s$ via $u|_{C\setminus B}$.
\end{proof}

\begin{Thm} Under the assumption that $W$ has at most lci singularities, the relative obstruction theory
$\left (R^\bullet\pi_*(u^*\mathbb{R}T_\varrho)\right )^\vee$ is perfect.
\end{Thm}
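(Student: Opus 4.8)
The plan is to show the complex $E_\mu^\bullet=\left(R^\bullet\pi_*(u^*\mathbb{R}T_\varrho)\right)^\vee\simeq\left(R^\bullet\pi_* F^\bullet\right)^\vee$ has perfect amplitude in $[-1,0]$; equivalently, that $R^\bullet\pi_* F^\bullet$ has perfect amplitude in $[0,1]$. Here $F^\bullet=[F^0\to F^1]$ is the two-term complex of vector bundles on $\mathcal{C}$ constructed just above, and the fact that $\mathbb{R}T_W$, hence $u^*\mathbb{R}T_\varrho$, can be represented by a complex with only two terms is exactly where the hypothesis that $W$ has at most lci singularities enters.

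Since $\pi\colon\mathcal{C}\to\QmapW$ is flat, projective, and of relative dimension $1$, and each $F^i$ is locally free (hence flat over $\QmapW$), the complex $G^\bullet:=R^\bullet\pi_* F^\bullet$ is perfect, and a priori of amplitude $[0,2]$: it is the total complex of $[R^\bullet\pi_* F^0\to R^\bullet\pi_* F^1]$ and each $R^\bullet\pi_* F^i$ is perfect of amplitude $[0,1]$ by the standard Grothendieck argument for projective flat morphisms with one-dimensional fibers. To improve the amplitude to $[0,1]$ I would invoke the standard criterion that a perfect complex $G^\bullet$ already of amplitude $[0,2]$ has amplitude $[0,1]$ provided $H^2\!\left(G^\bullet\otimes^{\mathbb{L}}k(s)\right)=0$ for all points $s$: locally one represents $G^\bullet$ by $[G^0\to G^1\to G^2]$ with vector-bundle terms, the hypothesis makes the last map surjective modulo every maximal ideal, hence a surjection of vector bundles by Nakayama, and then $[G^0\to\ker(G^1\to G^2)]$ represents $G^\bullet$ in amplitude $[0,1]$. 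By cohomology and base change (applicable because $F^\bullet$ is a bounded complex of vector bundles and $\pi$ is proper and flat), $G^\bullet\otimes^{\mathbb{L}}k(s)\cong R\Gamma(C,F^\bullet|_C)$ with $C=\mathcal{C}_s$, so it suffices to prove $\mathbb{H}^2(C,F^\bullet|_C)=0$ for every geometric fiber $C$.

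This last vanishing is where the preceding Lemma is used. Restricting the two-term complex of vector bundles to $C$ gives $F^\bullet|_C=[F^0|_C\to F^1|_C]$, and the hypercohomology spectral sequence $H^q(C,F^p|_C)\Rightarrow\mathbb{H}^{p+q}(C,F^\bullet|_C)$ involves only $p,q\in\{0,1\}$; since $\dim C=1$ kills all second cohomology, the only contribution in total degree $2$ is $\mathrm{coker}\!\left(H^1(C,F^0|_C)\to H^1(C,F^1|_C)\right)$. Write $T:=\mathrm{coker}(F^0|_C\to F^1|_C)$, which is the sheaf $H^1(F^\bullet|_C)$ and hence torsion on the curve $C$ by the Lemma, and set $\mathcal{I}:=\mathrm{image}(F^0|_C\to F^1|_C)$ and $\mathcal{K}:=\ker(F^0|_C\to F^1|_C)$. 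From $0\to\mathcal{I}\to F^1|_C\to T\to 0$ with $H^1(C,T)=0$ (a torsion sheaf on a curve is a finite sum of skyscrapers), and from $0\to\mathcal{K}\to F^0|_C\to\mathcal{I}\to 0$ with $H^2(C,-)=0$, it follows that $H^1(C,F^0|_C)\to H^1(C,F^1|_C)$ is surjective, so $\mathbb{H}^2(C,F^\bullet|_C)=0$. This completes the argument.

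The main obstacle is the homological bookkeeping in the middle step: formulating the ``descent of amplitude from fiberwise vanishing'' criterion precisely and invoking cohomology and base change for the complex $F^\bullet$ rather than for a single coherent sheaf. Both are routine once $R^\bullet\pi_* F^\bullet$ is represented locally by a finite complex of vector bundles, but since $\QmapW$ is only a Deligne--Mumford stack the local representatives and the Nakayama step should be carried out on an \'etale atlas.
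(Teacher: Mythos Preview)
Your proposal is correct and follows essentially the same approach as the paper: reduce to showing $R^\bullet\pi_*F^\bullet$ is perfect of amplitude $[0,1]$, use the preceding Lemma together with a spectral sequence computation to get $\mathbb{H}^2(C,F^\bullet|_C)=0$ on each geometric fiber, and then conclude by standard base-change/Nakayama arguments. The paper states exactly these steps but leaves the spectral sequence computation and the ``standard arguments'' unexpanded, whereas you have written them out in full.
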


\begin{proof} We need to check that $R^\bullet\pi_*(F^\bullet)$ is perfect, of amplitude in $[0,1]$.

Using the Lemma, a spectral sequence computation shows that the second hypercohomology group
$$\mathbb{H}^2(F^\bullet |_C)$$ vanishes. The assertion now follows by standard arguments.
\end{proof}

\begin{Rmk}
Since $\BunG$ is a smooth Artin stack, it follows
that there is an absolute perfect obstruction theory on $\QmapW$,  
$$E^\bullet=Cone(f)[-1],$$ the shifted cone of the composite morphism
$$f:E_\mu^\bullet\lra\mathbb{L}_\mu\lra \mu^*\mathbb{L}_{\BunG}[1].$$

Furthermore, since $\phi:\BunG\lra\fMgk$ is a smooth map between smooth Artin stacks (see \S 2.1), we also have
an induced perfect obstruction theory $E_\nu^\bullet$ relative to $$\nu:\QmapW\lra\fMgk,$$ 
fitting in a distinguished triangle
$$ E_\nu^\bullet\lra E_\mu^\bullet\lra\mu^*\mathbb{L}_\phi[1].
$$

One sees immediately that the absolute
obstruction theories corresponding to $E_\mu^\bullet$ and $E_\nu^\bullet$ coincide.
It is a well-known
fact (see, e.g. \cite[Proposition 3]{KKP}) that in this case all three perfect obstruction theories determine
the same virtual fundamental class $[\QmapW]^{vir}$.
\end{Rmk}

\begin{Rmk}\label{stablemaps} The arguments in this section show that any open DM substack of the Artin stack of quasimaps
to $\WmodG$ obtained by imposing a stability condition has the canonical relative obstruction theory (\ref{relobstruction})
over $\BunG$. For example, if the stability condition imposed is that the image of $u$ is contained in the stable locus $W^s$
(i.e., there are {\it no} base points) and that the line bundle 
$$\omega _C(\sum_{i=1}^kp_i) \ot {\mathcal L}_\theta^{\ot 3}$$
is ample, then we obtain the Kontsevich moduli stack $\Mgk(\WmodG,\beta)$ of stable maps to $\WmodG$.
The proof of Proposition \ref{smooth} shows that the obstruction theory is perfect in this case,
irrespective of the nature of singularities of $W\setminus W^s$. The induced relative obstruction theory
over $\fMgk$ is the usual obstruction theory of stable map spaces, so the virtual classes are again the same.

In fact, as we discuss in \S 7 below there is a one-parameter family of stability conditions interpolating between stable
maps and stable quasimaps, giving DM moduli stacks with perfect obstruction theories relative to $\BunG$ given by
\eqref{relobstruction}.

In particular, all these Deligne-Mumford stacks, 
together with their canonical perfect obstruction theories,
will agree over the open substacks parametrizing honest stable maps with {\it semistable} underlying curve.

\end{Rmk}

\begin{EG}\label{ci} The typical context where the results of this subsection apply is a generalization of the
case (suggested in \cite{MOP}) of complete intersections in $\PP^n$.

Namely,
let $V$ be a vector space with $\G$-action linearized by a character $\theta$ satisfying the assumptions in $\S 2.2$.
Let $E$ be a linear $\G$-representation and consider the $\G$-equivariant vector bundle $V\times E$ on $V$.
It induces a vector bundle $$\overline{E}:=V^s\times_\G E$$ on $Y:=\VmodG$.
Let $t\in\Gamma(V,V\times E)^{\G}$ be a regular $\G$-invariant section and let $$W:=Z(t)\subset V$$ be its
zero locus, which we assume to be not entirely contained in the unstable locus in $V$.
The section $t$ descends to a section $\overline{t}$ of $\overline{E}$ on $Y=\VmodG$, whose zero locus
$$X:=Z(\overline{t})\subset Y$$ is identified with $\WmodG$. We assume that $X$ is nonsingular (or, equivalently, that $W^s=W\cap V^s$
is nonsingular).

Since (after
forgetting the $\G$-action)
$t$ is identified with a  $\rm{rank}(E)$-tuple of functions on $V$, we deduce that $W$ is an affine complete intersection
subscheme of $V$ and therefore we have the moduli stacks $\QmapW$ carrying canonical perfect obstruction theories.

In particular, if $Y=\PP^n=\CC^{n+1}/\!\!/\CC^*$,  the above construction gives precisely
the nonsingular complete intersections $X\subset \PP^n$, with $W$ the affine cone over $X$.

Note, however, that for an arbitrary nonsingular projective subvariety $X\subset\PP^n$ the affine cone $W$
may have a worse than lci singularity at the origin, and the obstruction theory of $\mathrm{Qmap}_{g,k}(W/\!\!/\CC^*,\beta)$ will {\it not} be
perfect in general.

Marian, Oprea, and Pandharipande introduced a ``moduli space of stable quotients on a projective subvariety $X\subset\PP^n$" as
the closed substack of $\mathrm{Qmap}_{g,k}(\CC^{n+1}/\!\!/\CC^*,d)$ given by the ideal sheaf induced by the equations of
$X$ in $\PP^n$ (see \S 10.1 of \cite{MOP}). They also suggested there that this moduli space carries a natural perfect obstruction
theory when $X$ is
a complete intersection and asked if this holds for arbitrary $X$. The above discussion answers their question.
Indeed, it is easy to see that their moduli space is the disjoint union of $\mathrm{Qmap}_{g,k}(W/\!\!/\CC^*,\beta)$ over
all curve classes $\beta$ which are sent to $d$ via the map
$$\Hom(\Pic^{\CC^*}(W),\ZZ)\lra\Hom(\Pic^{\CC^*}(V),\ZZ)\cong \ZZ.$$

\end{EG}

\subsection{Dependence on GIT presentation}

Given a reductive group $\G$, a quasiprojective scheme 
$W$ equipped with a $\G$-action, and a $\G$-linearized ample line bundle $L$, we can consider the pair of Artin stacks
$$\cA = [W/\G], \cA^{ss} = [W^{ss}/G],$$
where $[W/\G]$ denotes the stack quotient and $W^{ss}$ denotes the locus of semistable point in $W$ with respect to $L$.  
We say that the data $(\G, W, L)$ is a GIT presentation for the pair $(\cA, \cA^{ss})$.

One can ask to what extent the moduli stack of quasimaps depends on the choice of GIT presentation.  
It is easy to see that if one presentation satisfies the conditions stated at the beginning of this section, so does any other.  
Also, the space of curve classes is simply the dual of $\mathrm{Pic}(\cA)$ so is independent of presentation.  
The following proposition is then immediate from the results of this section.

\begin{Prop}  The moduli stack $\mathrm{Qmap}_{g,k}(\WmodG,\beta)$ 
and its perfect obstruction theory only depend on the underlying pair $(\cA, \cA^{ss})$.
\end{Prop}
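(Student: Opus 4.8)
The plan is to show that the entire construction of $\QmapW$, as carried out in Sections 3 and 4, factors through the pair $(\cA, \cA^{ss}) = ([W/\G], [W^{ss}/\G])$ and never actually uses the specific triple $(\G, W, L)$ beyond what this pair records. First I would observe that a $k$-pointed genus $g$ quasimap of class $\beta$ is, by Definition \ref{qmap}, literally a map $[u]\colon C\to\cA=[W/\G]$ of class $\beta\in\Hom_\ZZ(\Pic(\cA),\ZZ)$ whose generic behavior lands in the open substack $\cA^{ss}=[W^{ss}/\G]$ (equal to $\cA^s=[W^s/\G]$ under our running hypotheses). Both the curve class group (the dual of $\Pic(\cA)$) and the notion ``generic point of each component maps into $\cA^{ss}$'' refer only to $(\cA,\cA^{ss})$; likewise the prestability and stability conditions of Definition \ref{stability} are phrased via $\omega_C(\sum p_i)\otimes\cL_\theta^{\ot\epsilon}$, and $\cL_\theta = [u]^*\cO_{\WmodG}(\theta) = [u]^*(L/\G)$ is the pullback of the line bundle on $\cA$ induced by the linearization, which is intrinsic to the ample generator of the relevant ray in $\Pic(\cA)$ determined by $\cA^{ss}\subset\cA$. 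Hence the groupoid of families of stable quasimaps over any base $S$ depends only on $(\cA,\cA^{ss})$, so $\QmapW$ is canonically equivalent to a moduli stack attached to $(\cA,\cA^{ss})$.

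Next I would address the obstruction theory. The canonical relative obstruction theory \eqref{relobstruction} over $\BunG$ was built from the cotangent complex $\mathbb{L}_u$ of the universal section $u\colon\cC\to\fP\times_\G W$, equivalently from $u^*\mathbb{R}T_\varrho$ where $\varrho\colon\fP\times_\G W\to\cC$; but $\fP\times_\G W$ is precisely $\cC\times_{[W/\G]}\cC$-relative incarnation of the universal map to $\cA$, and $\mathbb{R}T_\varrho$ is the pullback to $\cC$ of the relative tangent complex of $\cC\to\cC$ along the map to $\cA$, i.e.\ $[u]^*\mathbb{R}T_{\cA}$ up to the bundle of infinitesimal automorphisms of $\G$-bundles. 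Concretely, $R^\bullet\pi_*([u]^*\mathbb{R}T_{[W/\G]})$ only sees $W$ through the quotient stack $[W/\G]$, and restricting attention to where the map lands in $\cA^{ss}$ does not change this. So the relative obstruction theory, and therefore (via the smooth maps $\BunG\to\fMgk$ and the cone construction in the Remark after the lci theorem) the absolute perfect obstruction theory and the virtual class $[\QmapW]^{vir}$, are manifestly functions of $(\cA,\cA^{ss})$ alone.

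I would then record the two compatibility remarks flagged in the paragraph preceding the statement: that the hypotheses $(i)$–$(iii)$ of \S\ref{setup} are properties of the pair (if one presentation has $\emptyset\neq W^s=W^{ss}$ with $W^s$ nonsingular and free $\G$-action, the resulting $\cA^{ss}$ is a nonsingular variety, and this characterizes exactly when another presentation of the same pair will also satisfy the hypotheses), and that the lci condition on $W\setminus W^s$ needed for perfectness is likewise a condition on the complement $\cA\setminus\cA^{ss}$ as a stack. With these in hand the proposition follows formally: two presentations of $(\cA,\cA^{ss})$ give the same groupoid of stable quasimaps hence isomorphic stacks $\QmapW$, and the same complex computing the obstruction theory.

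The main obstacle I anticipate is purely bookkeeping rather than conceptual: making precise the identification $\mathbb{R}T_\varrho \simeq [u]^*\mathbb{R}T_{[W/\G]}$ (and its lci two-term presentation $[T_V|_W\to(I/I^2)^\vee]$ descending to $[W/\G]$) in a way that is manifestly independent of the chosen $\G$-equivariant embedding $W\hookrightarrow V$, so that the perfect obstruction theory is attached to the stack $\cA$ and not to the embedding data used to exhibit it. Since any two such presentations are related by passing to a common refinement (e.g.\ embedding both $W$'s $\G$-equivariantly into a larger $V$, or comparing the two quotient stacks directly), this comes down to the standard fact that the (co)tangent complex of an Artin stack is intrinsic, which I would simply invoke. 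No genuinely new estimate or geometric input is required beyond the results of Sections 3--4.
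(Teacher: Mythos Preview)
Your proposal is correct and is essentially what the paper has in mind: the paper gives no explicit proof at all, merely asserting that the proposition ``is then immediate from the results of this section'' after noting that the conditions on the presentation and the space of curve classes are intrinsic to $(\cA,\cA^{ss})$. You have spelled out that immediacy.

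One small imprecision worth tightening: you claim $\cL_\theta$ is ``intrinsic to the ample generator of the relevant ray in $\Pic(\cA)$ determined by $\cA^{ss}\subset\cA$,'' but two presentations with the same $\cA^{ss}$ need not have proportional linearizations, since GIT chambers can be higher-dimensional. What actually makes the stability condition presentation-independent is Lemma~\ref{effective}: for any $\theta$ with the given $\cA^{ss}$, one has $\deg(\cL_\theta|_{C'})>0$ if and only if $\beta|_{C'}\neq 0$, so the condition ``$\omega_C(\sum p_i)\otimes\cL_\theta^{\epsilon}$ ample for all $\epsilon>0$'' reduces to a statement about which components carry nonzero class, which is intrinsic. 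Similarly, your identification of the obstruction theory with $R^\bullet\pi_*([u]^*\mathbb{R}T_{\cA})$ is the $\nu$-relative one over $\fMgk$ rather than the $\mu$-relative one you start from (they differ exactly by the adjoint bundle $\fP\times_\G\mathfrak{g}$, as you note), but either way the resulting absolute theory and virtual class depend only on $\mathbb{R}T_{\cA}$, which is intrinsic to $\cA$. With these two clarifications your argument is complete.
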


Notice that this is weaker than claiming that the theory of quasimaps depends on either the GIT quotient or the stack quotient alone. 
 Given an Artin stack $\cA$ with reductive stabilizers, an approach to defining 
 Gromov-Witten invariants intrinsically to $\cA$ has been initiated in \cite{FTT}. 
  It would be interesting to compare their construction with ours.

\section{Quasimaps to quotients of vector spaces}\label{vspace}
In this section we discuss an alternative construction of
the stable quasimaps moduli in the important case when $W=V$ is a vector space.
This is a situation general enough to include many interesting instances of GIT quotient targets, such as toric varieties
and flag manifolds of type $A$. One reason this approach could be potentially useful is that it 
comes automatically with an easy and concrete
description of the virtual fundamental class: the moduli space is realized as the zero locus of a section
of a vector bundle on an appropriate {\it nonsingular} Deligne-Mumford stack and the
virtual class is simply given by the Fulton-MacPherson refined top Chern class of this vector bundle.

The construction presented here is a straightforward extension of
the one used in \cite{CK} for the moduli of stable toric quasimaps. Note also
that it recovers the moduli of stable quotients with its virtual class
by an approach different than that of \cite{MOP}.

\subsection{A generalized Euler sequence}
Recall our setup.
Let $V$ be a finite dimensional $\CC$-vector space equipped with a $\G$-action via
a representation $\G\lra \mathrm{GL}(V)$. We fix a linearization
$\theta\in\chi(\G)=\Pic^\G(V)$
satisfying
%\begin{enumerate}

\phantom{x}
$(i)$
$\emptyset\neq V^s=V^{ss}$.

$(ii)$ $\G$ acts freely on $V^s$. 

\phantom{x}

%\end{enumerate}
Let $\rho:V^s\lra\VmodG=V^s/\G$ be the projection.

It is useful to note the following presentation of the tangent bundle of $\VmodG$.
Namely, the infinitesimal action of $\G$ on $V^s$ induces an exact sequence of vector bundles
$$0 \lra V^s \times \mathfrak{g} \lra TV^s = V^s \times V \lra \rho^*T(\VmodG) \lra 0$$
where $\mathfrak{g}$ is the Lie algebra of $\G$.  This sequence is
$\G$-equivariant with respect to the adjoint action on $\mathfrak{g}$, so descends to an exact sequence
\begin{equation}\label{Eulerseq}
0\lra V^s\times_\G\mathfrak{g}\lra V^s\times_\G V\lra T_{\VmodG}\lra 0.
\end{equation}
This construction generalizes the Euler sequence presentation of the tangent bundle of projective space.

\subsection{Quasimaps from curves to $\VmodG$}

To give a quasimap of class $\beta$ to $\VmodG$ on a curve $C$ is the same as
giving a principal $\G$-bundle $P$ on $C$ of degree $\beta_P=\beta$ and a global section
$$u \in \Gamma (C, \cV_P )$$ of the induced vector bundle $\cV_P:=P\times_\G V$,
satisfying the generic nondegeneracy condition
\eqref{generic-ndeg}.

Let $k, g$ be fixed with $2g-2+k\geq 0$, and let $\beta$ be a $L_\theta$-effective class.
We have the moduli stack $$\mathrm{Qmap}_{g,k}(\VmodG,\beta)$$ of $k$-pointed, genus $g$ stable quasimaps of class
$\beta$ to $\VmodG$. 

\begin{Thm}\label{Thm2} 
$\mathrm{Qmap}_{g,k}(\VmodG,\beta)$ is a finite type Deligne-Mumford stack,
proper over the affine quotient $V/_{\mathrm{aff}}\G=\Spec (\mathrm{Sym}(V^\vee)^\G)$, and it
carries a perfect obstruction theory. 
\end{Thm}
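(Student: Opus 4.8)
The plan is to deduce Theorem \ref{Thm2} from the general machinery already established in Section 4, using the fact that a quotient of a vector space $V$ is a special case of the general GIT setup (with $W = V$ affine and with lci, in fact smooth, singularities everywhere). First I would observe that the hypotheses $(i)$ and $(ii)$ stated at the beginning of this section are precisely assumptions $(i)$ and $(iii)$ of \S\ref{setup}, while assumption $(ii)$ of \S\ref{setup} (nonsingularity of $V^s$) is automatic since $V$ is smooth. Therefore Theorem \ref{Thm1} applies verbatim with $W = V$: it gives that $\mathrm{Qmap}_{g,k}(\VmodG,\beta)$ is a Deligne--Mumford stack of finite type, that it carries a canonical relative obstruction theory over $\BunG$ which is perfect because $V$ is smooth (Proposition \ref{smooth}), and that it is proper over $\Spec(A(V)^\G) = \Spec(\Sym(V^\vee)^\G) = V/_{\mathrm{aff}}\G$. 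Finally, since $\BunG$ is a smooth Artin stack, the relative perfect obstruction theory over $\BunG$ induces an absolute perfect obstruction theory on $\mathrm{Qmap}_{g,k}(\VmodG,\beta)$, as in the remarks following the obstruction theory discussion. This already proves every assertion of the theorem.

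Since Section 5 advertises a second, more self-contained construction in the vector space case, I would also sketch that alternative route, as it is presumably what the authors intend to carry out. The idea is to exploit Corollary \ref{regularity}: for fixed $(g,k,\beta)$ there is a uniform $m \gg 0$ such that, for any family of quasimaps, $\cV_\cP(m) = (\cP \times_\G V) \otimes \cO_\cC(m)$ has vanishing higher cohomology on fibers. This means that pushing forward along $\pi$ converts the datum of the section $u$ of $\cV_\cP$ into a datum on the base, and realizes $\mathrm{Qmap}_{g,k}(\VmodG,\beta)$ as an open substack of (a bundle of affine spaces over) a stack which is smooth. Concretely, over the smooth stack $\BunG$ — or an appropriate finite-type, stability-truncated open piece $\BunGbeta^\circ$ of it carrying the universal curve $\fC$, universal bundle $\fP$ and the line bundle $\omega_\fC(\sum \mathfrak{p}_i) \otimes \mathfrak{L}^{\otimes \epsilon}$ — one forms the total space of the vector bundle $\pi_*(\cV_\fP(m))$, which is a smooth Artin stack; the locus where the section, twisted back down, actually lands in $\cV_\fP$ and satisfies generic nondegeneracy and stability is an open substack, which is the desired moduli space. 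Using the Euler sequence \eqref{Eulerseq} pulled back and pushed forward, together with the analogous two-term presentation of the obstruction theory, one identifies the virtual class with the Fulton--MacPherson refined top Chern class of an explicit vector bundle on this smooth ambient stack (after replacing $V$ by $V^s$ on the open locus, or by realizing the nondegeneracy condition as cutting out an open set). The properness over the affine quotient is then re-derived exactly as in Proposition \ref{propernessProp}, whose argument simplifies considerably when $W = V$ since the Hartogs extension of the section is extension of a section of a vector bundle.

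The main obstacle, in either approach, is the same subtle point that already appeared in Proposition \ref{propernessProp}: verifying properness, and in particular completeness via the valuative criterion. One must take a family of stable quasimaps over a punctured disc, pass to the associated family of stable maps to $\VmodG$ (which exists by properness of Kontsevich's moduli space), pull back the principal $\G$-bundle $V^s \to \VmodG$, contract rational tails, and then extend both the principal bundle (using Lemma \ref{Pext}, which needs $\G$ reductive) and the section across finitely many nonsingular points of the central fiber (using Hartogs' theorem, which needs $V$ affine — here trivially true). Checking that the resulting limit quasimap is again \emph{stable} — i.e., that $\cL_\theta$ has positive degree on every rational component with only two special points — requires the degree-splitting bookkeeping that shows every contracted tail contributed either a base point or a marking. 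For the vector space case this analysis is identical to \cite{CK}, so I would simply invoke that, or the already-proven Proposition \ref{propernessProp}. Separatedness, by contrast, is routine from the valuative criterion once one knows two principal $\G$-bundles agreeing away from finitely many points on a regular surface agree everywhere. Thus the only genuinely new content beyond citing Theorem \ref{Thm1} is the concrete Euler-sequence description of the obstruction theory and virtual class, which is a direct transcription of \cite{CK}, and I would present it as such rather than reproving it from scratch.
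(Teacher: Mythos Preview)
Your proposal is correct. The first paragraph---deducing everything from Theorem~\ref{Thm1} with $W=V$---is a valid shortcut the paper does not take, precisely because the point of Section~5 is to give an \emph{alternative} construction that exhibits the virtual class concretely as a refined top Chern class. Your second paragraph sketches that alternative construction and matches the paper's approach closely: form the total space $\fX$ of $\pi_*(\cV_\fP(m))$ over a finite-type piece of $\BunG$, pass to the open smooth DM locus $\cX^\circ$ where nondegeneracy and stability hold, and then realize $\QmapV$ inside it.

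One point in your sketch is imprecise and worth correcting, since it is exactly the mechanism that produces the refined Chern class description. You write that ``the locus where the section, twisted back down, actually lands in $\cV_\fP$ \dots\ is an open substack.'' It is not open: the paper uses the short exact sequence $0\to\cV_\fP\to\cV_\fP(m)\to\cG_m\to 0$ to see that this locus is the \emph{zero locus} $Z(s)$ of the tautological section $s$ of the vector bundle $\cF=p^*\pi_*\cG_m$ on $\cX^\circ$, hence a closed substack. The moduli space $\QmapV$ is then obtained as an open substack of $Z(s)$ (imposing that base points avoid nodes and markings). This closed-in-smooth presentation is what gives the absolute perfect obstruction theory $[\cF^\vee|_{\mathrm{Qmap}}\to\Omega^1_\cE|_{\mathrm{Qmap}}]$ and identifies the virtual class with the refined top Chern class of $\cF$. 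The paper then reads off the $\BunG$-relative theory as $(R^\bullet\pi_*\cV_\fP)^\vee$ and the $\fMgk$-relative theory as $(R^\bullet\pi_*\cQ)^\vee$ via the Euler sequence, exactly as you anticipate. Properness is simply cited from Proposition~\ref{propernessProp}, as you say.
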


\begin{proof} Given the boundedness results we established in $\S \ref{bdd}$, the argument is almost identical
to the one given in \cite{CK} for the special case of toric varieties.

There are forgetful morphisms of stacks making a commutative diagram
$$\begin{array}{ccc}
\QmapV& =&\QmapV\\ \\
\mu\downarrow & &\downarrow\nu \\ \\
\BunG &\lra &\fMgk
\end{array}$$
By Corollary \ref{boundedness-curve}, the image of $\nu$ is contained in an open and closed substack
of finite type $\mathfrak{S}\subset\fMgk$.
By Corollary \ref{boundedness-curve} and Theorem \ref{boundedness-bundle}, the image of $\mu$ is contained
in an open and closed substack  of finite type $\mathfrak{B}un_{\G,\beta}\subset\BunG$, lying over $\fS$.

On the universal curve $\mathfrak{C}\stackrel\pi\lra\BunGbeta$ we have the universal principal bundle $\mathfrak{P}$
and the induced universal vector bundle $\mathcal{V}_\mathfrak{P}=\mathfrak{P}\times_\G V$. 
As in \S \ref{construction}, after replacing $\BunGbeta$ by an open substack, we may assume that $\pi$ is a projective morphism.
Let $\cO_\mathfrak{C}(1)$
be a $\pi$-relatively ample line bundle and fix a global section $0\lra\cO_\mathfrak{C}\lra\cO_\mathfrak{C}(1)$. 
Define a sheaf $\mathcal{G}_m$ on $\mathfrak{C}$ by the exact sequence
$$0\lra\cV_\mathfrak{P}\lra\cV_\mathfrak{P}(m)\lra\mathcal{G}_m\lra 0.$$
By Corollary \ref{regularity} there is an $m>\!\!>0$, which we will fix from now on, such that
$$\pi_*(\cV_\mathfrak{P}(m))\;\;\;\mathrm{and}\;\;\ \pi_*(\mathcal{G}_m)$$
are vector bundles on $\BunGbeta$.

Let $\mathfrak{X}$ be the total space of the vector bundle $\pi_*(\cV_\mathfrak{P}(m))$. It is a smooth
Artin stack of finite type, whose $\CC$-points
parametrize tuples
\begin{equation}\label{ambient}
(C,p_1,\dots, p_k, P, u\in\Gamma(C,\cV_P(m))).
\end{equation}
Let $\cX^\circ$ be the open substack in $\mathfrak{X}$ determined by imposing the requirements
\begin{itemize}
\item $u$ sends all but possibly finitely many points of $C$ to $V^s$; here we identify the fibers
of $\cV_P(m)$ with $V$.\footnote{Apriori, this identification only makes sense locally, in a trivialization $\{U_i, \varphi_i\}$ of $\cV_P(m)$ on $C$.
However, one checks that the local $V^s\times U_i$ glue together into a global open subscheme $\cV_P(m)^s\subset\cV_P(m)$ under the transition functions
of $\cV_P(m)$. This is so since the $\G$-invariant locus $V^{ss}=V^s$ is also invariant under the scaling action by the homotheties of $V$. In turn,
this invariance follows easily from the definition of $\theta$-semistable points and the fact that the group of homotheties is identified with the center of $GL(V)$, and therefore
its action on $V$ commutes with the action of $\G$. }
\item the line bundle $\omega _C(\sum_{i=1}^kp_i) \ot {\mathcal L}^\epsilon $ is ample for every rational $\epsilon > 0$, where
${\mathcal L}=P\times_\G L_\theta$.
\end{itemize}
By stability, $\cX^\circ$ is a smooth Deligne-Mumford stack. It comes with a projection $p:\cX^\circ\lra\BunGbeta$. The vector
bundle
\begin{equation}\cF:=p^*(\pi_*(\mathcal{G}_m))\label{bundle}\end{equation}
on $\cX^\circ$ has a tautological section $s$ induced by the maps
$$\Gamma(\mathfrak{C},\cV_\mathfrak{P}(m))\lra\Gamma(\mathfrak{C},\mathcal{G}_m).$$
Its zero locus $Z(s)$
is the closed substack parametrizing tuples as in \eqref{ambient}, but with $u\in\Gamma(C,\cV_P)\subset\Gamma(C,\cV_P(m))$.

Finally, $\QmapV$ is identified with the open substack of $Z(s)$ obtained by
imposing the strong nondegeneracy condition that the base-points are away from nodes
and markings, and is therefore a Deligne-Mumford stack of finite type.

Properness has been proved more generally in the previous section.

Finally, it is easy now to describe the relative and absolute obstruction theories.
For this, we note first that the order in which the last two conditions were imposed in the construction
of $\QmapV$ may be in principle reversed. Namely, there
is an open substack $\cE\subset \cX^\circ$ (which is necessarily DM and smooth) such that $\QmapV$ is cut out in $\cE$
as the zero locus $Z(s)$ of the tautological section of the bundle (\ref{bundle}) {\it restricted} to $\cE$.

Let
$$\mathfrak{C}\stackrel\pi\lra\QmapV$$
be the universal curve. The above description gives an absolute perfect obstruction theory 
$$[\cF^\vee|_\mathrm{Qmap}\stackrel{ds^\vee}\lra \Omega^1_{\cE}|_\mathrm{Qmap}]$$
on $\QmapV$,
whose virtual class is the refined top Chern class of $\cF^\vee|_\mathrm{Qmap}$.

As explained in  \cite[\S 5]{CK}, the induced
relative obstruction theory over $\BunG$ is
$$[\cF^\vee|_\mathrm{Qmap}\stackrel{ds^\vee}\lra \Omega^1_{\cE/\BunGbeta}|_\mathrm{Qmap}]\;\;
\stackrel{\mathrm{qis}}\sim \;\;(R^\bullet\pi_*(\cV_\mathfrak{P}))^\vee$$
and the relative obstruction theory over $\fMgk$ is
$$(R^\bullet\pi_*(\cQ))^\vee,$$
where $\cQ$ is defined by the ``Euler sequence" on $\mathfrak{C}$
$$0\lra\mathfrak{P}\times_G\mathfrak{g}\lra\cV_\mathfrak{P}\lra\cQ\lra 0$$
induced by \eqref{Eulerseq}.
\end{proof}

\begin{Rmk} If $W$ is a $\G$-invariant affine subvariety of $V$, then 
we have the induced GIT quotient $\WmodG$ (using the same linearization $\theta$).
By restricting only to quasimaps
$((C,p_i),P,u)$ for which
$$u(C)\subset P\times_\G W\subset P\times_\G V$$
we obtain a closed substack  of $\QmapV$. It is obvious that this substack is identified with
$$\coprod_{\widetilde{\beta}\mapsto\beta}\mathrm{Qmap}_{g,k}(W/\!\!/\G,\widetilde{\beta}),$$ with 
the disjoint union over
all curve classes $\widetilde{\beta}$ which are mapped to $\beta$ via the morphism
$$\Hom(\Pic^{\G}(W),\ZZ)\lra\Hom(\Pic^{\G}(V),\ZZ)$$
induced by the pull-back $\Pic^\G(V)\lra\Pic^\G(W)$. 
Hence Theorem \ref{Thm2} provides a different proof of the fact that $\mathrm{Qmap}_{g,k}(W/\!\!/\G,\widetilde{\beta})$
is a Deligne-Mumford stack of finite type for these targets $\WmodG$.
\end{Rmk}

%%%%%%%%%%%%%%%%%%%%%
\section{Quasimap invariants}
\subsection{Descendant invariants} Throughout this subsection and the next we assume that $\WmodG$ is projective.
Since the moduli spaces $\QmapW$ are proper Deligne-Mumford stacks with perfect obstruction theories, a system of invariants is obtained
by integrating natural cohomology classes against the virtual class. Precisely, we have the following canonical structures:

\begin{itemize}
\item Evaluation maps
$${ev}_i :\QmapW\lra \WmodG, \;\;\ i=1,\dots, k$$
at the marked points. These are well-defined since base-points cannot occur at markings.

\item Cotangent line bundles
$$M_i:= s_i^*(\omega_{\cC/\QmapW}),\;\; i=1,\dots, k$$
where $\cC$ is the universal curve, $\omega_{\cC/\QmapW}$ is the relative dualizing sheaf (which is a line bundle), and
$$s_i:\QmapW\lra\cC$$
are the universal sections. We denote
$${\psi}_i:=c_1(M_i).$$
\end{itemize}
\begin{Def} The descendant quasimap invariants are
$$\langle \tau_{n_1}(\gamma_1),\dots,\tau_{n_k}(\gamma_k)\rangle_{g,k,\beta}^{quasi}:=
\int_{[\QmapW]^{\mathrm{vir}}}\prod_{i=1}^k{\psi}_i^{n_i}{ev}_i^*(\gamma_i),$$
with $\gamma_1,\dots ,\gamma_k\in H^*(\WmodG,\QQ)$ and
$n_1,\dots,n_k$  nonnegative integers.

\end{Def}

As explained in \cite{CK}, the quasimap invariants satisfy the analogue of the Splitting Axiom in Gromov-Witten theory.
Furthermore, there is a natural map
$$f:\QmapW\lra\Mgk$$
which forgets the principal $\G$-bundle and the section, and contracts the unstable components of the underlying pointed curve.
We may define {\it quasimap classes} in $H^*(\Mgk,\QQ)$ by
$$f_*(\prod_{i=1}^k{\psi}_i^{n_i}{ev}_i^*(\gamma_i)),$$
which by the splitting property give rise to a {\it Cohomological Field Theory} on $H^*(\WmodG,\QQ)$.

On the other hand, the universal curve over $\QmapW$
is {\it not} isomorphic to $\mathrm{Qmap}_{g,k}(\WmodG,\beta)$, and in general there are no maps
$$\mathrm{Qmap}_{g,k+1}(\WmodG,\beta)\lra\QmapW$$
which forget one marking. Note that the corresponding maps for the Kontsevich moduli spaces of stable
maps are crucially used in the proofs of the string and divisor equations for descendant Gromov-Witten invariants.
Some examples showing that the exact analogue of the
string and divisor equations fail for quasimap invariants of certain
non-Fano toric varieties are given in
\cite{CK}. However, the quasimap and Gromov-Witten theories of Grassmannians are shown to coincide in
\cite{MOP}; the same was conjectured to hold for Fano toric varieties, see \cite{CK}, and is proved in \cite{CK2}.
Furthermore, for general targets
we expect that the two theories  are related by ``wall-crossing" formulas involving 
the $\epsilon$-stable quasimaps described in \S\ref{e-stable} below (this will be addressed elsewhere).
Consequently, the quasimap invariants should still satisfy some modified versions of the equations. At the moment
it is not yet clear to us if reasonable general formulae can be written down.

\subsection{Twisted invariants}\label{twisted} In Gromov-Witten theory, twisted invariants have been introduced and
studied by Coates and Givental, \cite{CG}. Their counterparts in quasimap theory are easily obtained.

Let
$$\cC \stackrel\pi\lra\QmapW$$ be the universal curve, with universal principal $\G$-bundle $\mathfrak{P}$ on it.
Let
$$\tilde{u}:\fP\lra W$$ be the universal $\G$-equivariant map and let
$$u:\cC\lra\fP\times_\G W$$
be the induced universal section.
Let $E$ be any $\G$-equivariant vector bundle on $W$ (for example, we could take a linear
$\G$-representation $E$ and, by slight abuse of notation, denote also by $E$ the associated
$\G$-equivariant vector bundle $W\times E$) and let $\overline{E}=[E|_{W^s}/G]$ be the induced vector bundle on $\WmodG$. Then
$\fP\times_\G E$ is a vector bundle on $\fP\times_\G W$ and we consider the vector bundle
$$E_{g,k,\beta}:=u^*(\fP\times_\G E)$$
on the universal curve.

\vskip .2in

\noindent{\it Claim}: $R^\bullet\pi_*E_{g,k,\beta}:=[R^0\pi_*E_{g,k,\beta}]-[R^1\pi_*E_{g,k,\beta}]$ 
is an element in $$K^\circ(\QmapW),$$ the $K$ group of vector bundles on
$\QmapW$.
\begin{proof} Let $\cO(1)$ be a $\pi$-relatively ample line bundle on $\cC$. For $m>\!\!>0$, we have a surjection
$$B\lra E_{g,k,\beta}(m)\lra 0,$$
with $B$ a trivial vector bundle. The kernel, call it $A$, is also a vector bundle on $\cC$, and there is an exact sequence
$$0\lra A(-m)\lra B(-m)\lra E_{g,k,\beta}\lra 0.
$$
Since
$$R^0\pi_*(A(-m))=R^0\pi_*(B(-m))=0,$$
we have a complex of vector bundles
$$R^1\pi_*(A(-m))\lra R^1\pi_*(B(-m))$$
whose cohomology is precisely $R^\bullet\pi_*E_{g,k,\beta}$.
\end{proof}

Let $\CC^*$ act trivially on $\mathrm{Qmap}=\QmapW$, with equivariant parameter $\lambda$, so that
$$H^*_{\CC^*}(\mathrm{Qmap},\QQ)\cong H^*(\mathrm{Qmap},\QQ)\otimes_\QQ \QQ[\lambda].$$
Similarly, the rational Grothendieck group of $\CC^*$-equivariant vector bundles is
$$K^\circ_{\CC^*}(\mathrm{Qmap})\cong K^\circ(\mathrm{Qmap})\otimes \QQ[\lambda,\lambda^{-1}].$$

Now fix an invertible multiplicative class $c$, that is, a homomorphism
$$c:K^\circ_{\CC^*}(\mathrm{Qmap})\lra U(H^*(\mathrm{Qmap},\QQ)\otimes\QQ[\lambda,\lambda^{-1}])$$
to the group of units in the localized $\CC^*$-equivariant cohomology ring. In addition, fix a $\G$-equivariant
bundle $E$ on $W$. Let $\CC_\lambda$ denote the 1-dimensional representation of $\CC^*$ with weight $\lambda$.
Applying the construction described earlier in this subsection to the $\G\times \CC^*$-equivariant
bundle $E\otimes\CC_\lambda$ (for the trivial action of $\CC^*$ on $W$) yields an element
$$R^\bullet\pi_*E_{g,k,\beta}(\lambda)\in K^\circ(\mathrm{Qmap})\otimes \QQ[\lambda,\lambda^{-1}].$$

\begin{Def} The $(E, c)$-twisted quasimap invariants are
$$\langle \tau_{n_1}(\gamma_1),\dots,\tau_{n_k}(\gamma_k)\rangle_{g,k,\beta}^{quasi, (c,E)}:=
\int_{[\mathrm{Qmap}]^{\mathrm{vir}}}c(R^\bullet\pi_*E_{g,k,\beta}(\lambda))\prod_{i=1}^k{\psi}_i^{n_i}{ev}_i^*(\gamma_i).$$
\end{Def}

By definition, the twisted invariants lie in $\QQ[\lambda,\lambda^{-1}]$.

We discuss next an important example of this construction.
A typical choice of multiplicative class for twisting is the equivariant
Euler class. For an ordinary bundle
$F$ of rank $r$ on a space $Y$, with Chern roots $f_1,\dots,f_r$, the Euler class of $F\otimes\CC_\lambda$ is
$$e(F\otimes\CC_\lambda)=\prod_{i=1}^r(f_i+\lambda)=\lambda^r+\lambda^{r-1}c_1(F)+\dots+c_r(F),$$
i.e., a version of the Chern polynomial of $F$.

Suppose now that we are in the situation described in Example \ref{ci}: $V$ is a vector space, $E$ is the bundle
$V\times E$ coming from a linear $\G$-representation, $W=Z(t)$ is the zero locus of a regular section
$t\in\Gamma(V,E)^\G$, and $W\cap V^s$ is nonsingular. For equivariant curve classes $\beta\in \Hom(\Pic^\G(V),\ZZ)$
and $\tilde{\beta}\in\Hom(\Pic^\G(W),\ZZ)$ we write $\tilde{\beta}\mapsto\beta$ if $\tilde{\beta}$ is sent to $\beta$ under the
natural map between the duals of equivariant Picard groups. Put
$$\QmapW:=\coprod_{\tilde{\beta}\mapsto\beta}\mathrm{Qmap}_{g,k}(\WmodG,\tilde{\beta}).$$
There is an induced closed embedding of stacks
$$i:\QmapW\lra\QmapV.$$

\begin{Prop}\label{pushforward} Assume that $R^1\pi_*E_{g,k,\beta}=0$. Then
\begin{equation}\nonumber\begin{split}
i_*[&\QmapW]^{\mathrm{vir}} =e(R^\bullet\pi_*E_{g,k,\beta})\cap[\QmapV]^{\mathrm{vir}}\\&=
(e(R^\bullet\pi_*E_{g,k,\beta}(\lambda))\cap[\QmapV]^{\mathrm{vir}})|_{\lambda=0}.\end{split}\end{equation}
\end{Prop}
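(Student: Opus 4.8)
The plan is to identify $\mathrm{Qmap}(\WmodG)$ with the zero locus of a section of a vector bundle over $\mathrm{Qmap}(\VmodG)$ and then invoke the functoriality of virtual classes under refined Gysin maps. First I would recall from Section 5 that $\mathrm{Qmap}_{g,k}(\VmodG,\beta)$ is cut out inside a smooth Deligne-Mumford stack $\cE$ (an open substack of the total space of $\pi_*\cV_\fP(m)$) as the zero locus $Z(s)$ of the tautological section $s$ of the bundle $\cF=p^*\pi_*\cG_m$, with virtual class the refined top Chern class of $\cF^\vee$. Over $\cE$, the universal curve carries the universal vector bundle $\cV_\fP$, and the linear $\G$-representation $E$ together with the section $t\in\Gamma(V,E)^\G$ produces a bundle map $\cV_\fP\to E_{g,k,\beta}$ on the universal curve (here $E_{g,k,\beta}$ is the induced bundle $u^*(\fP\times_\G E)$); pushing forward gives a section of $R^\bullet\pi_*E_{g,k,\beta}$, and under the vanishing hypothesis $R^1\pi_*E_{g,k,\beta}=0$ this is an honest section $\sigma$ of the vector bundle $R^0\pi_*E_{g,k,\beta}$ over $\cE$ (or over a suitable smooth ambient stack). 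The key point is that $\mathrm{Qmap}(\WmodG)$, as a closed substack of $\mathrm{Qmap}(\VmodG)$, is exactly the locus where the universal section $u$ lands in $\fP\times_\G W$, which is precisely $Z(\sigma)$.

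Next I would make the compatibility of obstruction theories precise. The absolute perfect obstruction theory of $\mathrm{Qmap}(\WmodG)$ from Section 4 is built from $\mathbb{R}T_W=[T_V|_W\to(I/I^2)^\vee]$, and since $W=Z(t)$ is a complete intersection, $(I/I^2)^\vee\cong E|_W$ canonically; tracing through the construction, the obstruction theory of $\mathrm{Qmap}(\WmodG)$ is obtained from that of $\mathrm{Qmap}(\VmodG)$ by adjoining the complex $R^\bullet\pi_*E_{g,k,\beta}$. Concretely, there is a morphism of obstruction theories realizing $i:\mathrm{Qmap}(\WmodG)\hookrightarrow\mathrm{Qmap}(\VmodG)$ as a "$\sigma$-regular embedding" in the sense required for the functoriality statement. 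I would then apply the standard comparison result (the one used by Kim--Kresch--Pantev, or Manolache's functoriality of virtual pullback, or the older Cox--Katz / Kim--Lee type arguments for zero loci of sections) which says: if $Y=Z(\sigma)\subset X$ is the zero locus of a section of a vector bundle $N$ on $X$, $X$ carries a perfect obstruction theory, and $Y$ carries the induced one, then $i_*[Y]^{\mathrm{vir}}=e(N)\cap[X]^{\mathrm{vir}}$. Applying this with $X=\mathrm{Qmap}(\VmodG)$, $N=R^0\pi_*E_{g,k,\beta}=R^\bullet\pi_*E_{g,k,\beta}$ yields the first displayed equality. The second equality is then immediate from the definition of the equivariant Euler class: $e(R^\bullet\pi_*E_{g,k,\beta}(\lambda))=\lambda^r+\lambda^{r-1}c_1+\dots+c_r$ with top term $c_r=e(R^\bullet\pi_*E_{g,k,\beta})$, so setting $\lambda=0$ recovers the non-equivariant Euler class.

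The main obstacle I anticipate is the careful verification that the perfect obstruction theory of $\mathrm{Qmap}(\WmodG)$ induced from Section 4 genuinely agrees with the one induced as the zero locus $Z(\sigma)$ inside $\mathrm{Qmap}(\VmodG)$ — that is, producing a commutative diagram of distinguished triangles relating $E^\bullet_{\mathrm{Qmap}(\WmodG)}$, $i^*E^\bullet_{\mathrm{Qmap}(\VmodG)}$, and $(R^\bullet\pi_*E_{g,k,\beta})^\vee$. This is essentially a diagram chase using that both theories are controlled by pushforwards along $\pi$ of the relevant (co)tangent complexes on the universal curve, and that $\mathbb{R}T_W=[T_V|_W\to E|_W]$ as a two-term complex. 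Once this compatibility is in hand, the rest is a formal application of the excess intersection / virtual pullback formalism. I would also note that the hypothesis $R^1\pi_*E_{g,k,\beta}=0$ is what guarantees $R^\bullet\pi_*E_{g,k,\beta}$ is represented by an actual vector bundle $R^0\pi_*E_{g,k,\beta}$, so that "zero locus of a section of a vector bundle" is literally correct and the refined top Chern class makes sense on the nose; without it one would need the derived/cone version of the argument.
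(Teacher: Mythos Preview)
Your proposal is correct and follows essentially the same approach as the paper: the paper observes that the vanishing hypothesis makes $R^0\pi_*E_{g,k,\beta}$ a vector bundle, declares the second equality immediate from the definitions, and for the first equality simply invokes \cite{KKP} ``by the same argument as the one given there for moduli of stable maps.'' The zero-locus identification, the compatibility of obstruction theories, and the functoriality of virtual classes that you spell out are precisely the content of the cited argument from \cite{KKP}, so you have in effect unpacked what the paper leaves as a reference.
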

\begin{proof} The assumed vanishing implies that $R^0\pi_*E_{g,k,\beta}$ is a vector bundle. The second equality is now
immediate from the definitions, while the first follows from \cite{KKP} by the same argument as the one given there for moduli
of stable maps.
\end{proof}

As we already mentioned in the proof, Proposition \ref{pushforward} holds for stable maps, however, the required vanishing
is true only for $g=0$. It is a very important unsolved problem in Gromov-Witten theory to find a useful expression for the push-forward of
the virtual class in the proposition when $g\geq 1$. As observed in \cite{MOP} (for the special case of complete intersections in $\PP^n$), one
can do slightly better in quasimap theory. We will say that a character $\eta\in\chi(\G)$ is {\it positive} with respect to our given linearization $\theta$
if $V^s(\theta)=V^s(\eta)=V^{ss}(\eta)$ (in particular, the line bundle induced by $\eta$ on $\VmodG$ is ample), and we will say it is {\it semi-positive} if 
$V^s(\theta)\subset V^{ss}(\eta)$.

\begin{Prop}\label{vanishing} The vanishing $R^1\pi_*E_{g,k,\beta}=0$, hence the conclusion of
Proposition \ref{pushforward}, holds in the following cases:

$(i)$ $g=0$, $k$ arbitrary, and $E=\oplus_{i=1}^r\CC_{\eta_i}$, with each $\eta_i$ semi-positive.

$(ii)$ $g=1$, $k=0$, and $E=\oplus_{i=1}^r\CC_{\eta_i}$, with each $\eta_i$ positive.

\end{Prop}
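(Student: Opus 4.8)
The plan is to reduce the vanishing $R^1\pi_*E_{g,k,\beta}=0$ to a fiberwise cohomology vanishing statement: since $\pi$ is a flat projective family of curves of genus $g$ and $E_{g,k,\beta}=u^*(\fP\times_\G E)$ is a vector bundle on the universal curve $\cC$, by cohomology and base change it suffices to show $H^1(C, E_C)=0$ for every geometric fiber $C$ of $\pi$ and the restriction $E_C$ of $E_{g,k,\beta}$. When $E=\oplus_{i=1}^r\CC_{\eta_i}$ is a sum of characters, $E_C$ splits as a direct sum of line bundles $\cL_{\eta_i}:=u^*(P\times_\G L_{\eta_i})\cong P\times_\G\CC_{\eta_i}$ on $C$, so the whole problem decomposes into showing $H^1(C,\cL_\eta)=0$ for a single semi-positive (resp. positive) character $\eta$, for every stable quasimap $((C,p_i),P,u)$ of genus $0$ (resp. genus $1$).

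Next I would establish the crucial positivity input: if $\eta$ is semi-positive, then $\deg_{C'}\cL_\eta\geq 0$ on every irreducible component $C'$ of $C$, and if $\eta$ is positive and the quasimap is nonconstant on $C'$ then $\deg_{C'}\cL_\eta>0$. This is exactly the mechanism in the proof of Lemma \ref{effective}: a semi-positive $\eta$ gives, via a $\G$-invariant section $t\in\Gamma(V,L_{m\eta})^\G$ for suitable $m$, a nonzero section $u^*t$ of $\cL_{m\eta}|_{C'}$ whenever the generic point of $C'$ maps to $V^s(\theta)\subset V^{ss}(m\eta)$, whence $\deg_{C'}\cL_\eta\geq 0$; if moreover $\eta$ is positive then $\cO_{\VmodG}(\eta)$ is ample, so on components where $u$ is a nonconstant map to $\VmodG$ the degree is strictly positive, while on components carrying a base point the argument of Lemma \ref{effective} (the section $u^*t$ vanishes at the base point) again forces strict positivity.

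For case $(i)$, $g=0$: on a genus-zero prestable curve $C$ with $\cL_\eta$ of nonnegative degree on each component, a standard argument on trees of $\PP^1$'s (induction on the number of components, using the normalization sequence at a node and $H^1(\PP^1,\cO(d))=0$ for $d\geq -1$) gives $H^1(C,\cL_\eta)=0$; the relevant fact is just that a nef line bundle on a nodal genus-zero curve has vanishing $H^1$. For case $(ii)$, $g=1$, $k=0$: here the domain $C$ has arithmetic genus one, so its dual graph has a unique cycle (or a single genus-one component), and I would argue that on the genus-one part $\cL_\eta$ has strictly positive degree — because a genus-one component or the core cycle cannot be contracted by $f:\QmapW\to\overline{M}_{1,0}$ for stability reasons unless it carries positive $\cL_\theta$-degree, and by stability with $k=0$ every rational tail must then carry positive degree as well, while positivity of $\eta$ upgrades "positive $\cL_\theta$-degree" to "positive $\cL_\eta$-degree" on each such component — and then conclude $H^1=0$ by the same normalization/dévissage, using that a line bundle of positive degree on a genus-one curve, and of degree $\geq -1$ restricted to each rational tail, has no $H^1$ (one peels off the rational tails first, then is left with $H^1$ of a positive-degree line bundle on an irreducible or cyclic genus-one curve, which vanishes by Riemann–Roch and Serre duality since the dual has negative degree hence no sections).

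The main obstacle I anticipate is the genus-one bookkeeping in case $(ii)$: one must rule out the possibility that $\cL_\eta$ has degree $0$ on the genus-one core, which is where the hypotheses "$k=0$" and "$\eta$ positive" (rather than merely semi-positive) are both genuinely needed — with $k=0$ a stable quasimap of genus one cannot have a totally contracted constant core, and positivity is what prevents a nonconstant core from having $\cL_\eta$-degree zero. Making the dévissage over the dual graph fully rigorous (choosing the right order to strip components, keeping track of the degree bounds $\geq -1$ on the ends at each stage) is routine but is the step requiring the most care; everything else follows from Lemma \ref{effective}, the King numerical criterion already invoked in \S\ref{bdd}, and cohomology and base change.
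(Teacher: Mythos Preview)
Your proposal is correct and follows the same strategy as the paper: reduce to fiberwise $H^1$-vanishing by cohomology and base change, split $E_C$ into line bundles $\cL_{\eta_i}$, invoke the argument of Lemma~\ref{effective} for nonnegativity (resp.\ positivity) of their degrees on each component, and finish via the combinatorics of the curve.

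The one place you diverge from the paper is case $(ii)$, and there you overcomplicate matters. You plan a d\'evissage that peels off rational tails before treating a genus-one core, but for $g=1$, $k=0$ there are \emph{no} rational tails: a rational component $C'$ meeting the rest of $C$ in a single node has $\deg\omega_C|_{C'}=-1$, so $\omega_C\otimes\cL_\theta^\epsilon$ fails to be ample on $C'$ for small $\epsilon$, violating stability. The same reasoning excludes any rational component with fewer than two nodes, so the underlying curve is forced to be either a smooth irreducible elliptic curve or a cycle of smooth rational curves (including the irreducible nodal cubic). On every component $\omega_C$ then has degree zero, so stability gives $\deg\cL_\theta>0$ on each, and positivity of $\eta$ upgrades this to $\deg\cL_\eta>0$. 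The paper simply observes that such curves carry no special line bundles of positive degree, whence $H^1=0$; your Serre-duality formulation of the same fact is fine. Your appeal to a forgetful map $f:\QmapW\to\overline{M}_{1,0}$ should be dropped---that moduli space does not exist---and replaced by the direct stability argument above.
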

\begin{proof} Let $C$ be a geometric fiber of the universal curve, let $P$ be the corresponding
principal bundle, and let $u:C\lra P\times_\G V$ be the corresponding section. The restriction to any irreducible
component $C'$ of $C$ of the vector bundle
$$u^*(P\times_\G(V\times E))$$
decomposes as the direct sum of line bundles
$$\oplus_iu^*(P|_{C'}\times_\G(V\times \CC_{\eta_i}))\cong\oplus_i(P|_{C'}\times_\G\CC_{\eta_i}).$$

In case $(i)$ each irreducible
component $C'$ of $C$ is a rational curve. The semi-positivity assumption implies that each of the line bundles
has nonnegative degree on $C'$ by the argument of Lemma \ref{effective}. We deduce that
$$H^1(C,u^*(P\times_\G(V\times E)))=0,$$
from which the vanishing follows.

In case $(ii)$ the underlying curve of a stable quasimap is either an irreducible elliptic curve,
or a cycle of rational curves (this last case includes the cycle of length one, i.e., an irreducible nodal curve
of arithmetic genus one) . Since there are no special line bundles of positive degree on such curves,
the vanishing follows from the positivity of the $\eta_i$'s.
\end{proof}

\begin{Rmk} It is clear that
in $(i)$ it suffices to assume that $E$ is a representation such that
the bundle $V\times E$ is generated by $\G$-equivariant global sections.
\end{Rmk}

%%%%%%%%%%%%%%%%%%%%%%%%%%%%%%%%%%%%%%%%%%%%%%%%%%%%%%%%%%%%%%%%%%%%%
%%%%%%%%%%%%%%%%%%%%%%%%%%%%%%%%%%%%%%%%%%%%%%%%%%%%%%%%%%%%%%%%%%%%%

\subsection{Invariants for noncompact $W/\!\!/\G$}\label{noncompact}

Recall that when $\WmodG$ is not projective we have only a proper morphism to the affine quotient
$$\QmapW\lra \mathrm{Spec}(A(W)^\G).$$

As is the case in Gromov-Witten theory, even though the integrals against the virtual class are not well-defined
due to the lack of properness of $\QmapW$, there are many situations for which one can get an interesting theory of
{\it equivariant} invariants via the virtual localization formula of \cite{GP}. This happens when there is a
torus $\bS\cong (\CC^*)^m$ with an action on $W$, commuting with the given $\G$-action, and such that the
fixed locus of the induced
$\bS$-action on $\QmapW$ is proper.

The $\bS$-action on $\QmapW$, is given on $\CC$-points by
$$s\cdot ((C,p_1,\dots,p_k),P,u)=((C,p_1,\dots,p_k),P,s\circ u).
$$
Here we view $s\in\bS$ as an automorphism of $W$, giving rise to a
$C$-automorphism of the fibration $P\times_\G W\lra C$, still denoted by $s$.

Note that the map
$$\mu:\QmapW\lra\BunGbeta$$
is $\bS$-equivariant, where $\bS$ acts trivially on $\BunGbeta$.

From its description in (\ref{relobstruction}), it is clear that the
$\mu$-relative obstruction theory is $\bS$-equivariant, hence the same holds for the absolute
obstruction theory as well.

Furthermore, the stack $\QmapW$ admits an $\bS$-equivariant closed embedding into a smooth Deligne-Mumford
stack. When $W=V$ is a vector space, this is shown already by the construction in \S \ref{vspace}; in general, one can 
essentially deduce it from this case using Proposition \ref{embedding} 
(some care is need to handle the fact that $V$ may have strictly semistable points and stable points with nontrivial stabilizer).

It follows from \cite{GP} that each component $F$ of the fixed point locus
$\QmapW^{\bS}$ has a virtual fundamental class and a virtual normal bundle, whose Euler class
is invertible in $H^*_\bS(F,\QQ)\otimes\QQ(\lambda_1,\dots,\lambda_m)$.
Here $\lambda_1,\dots, \lambda_m$ denote the equivariant parameters, so that
$$H^*_{\bS}(\mathrm{pt},\QQ)=H^*(B\bS)\cong\QQ[\lambda_1,\dots, \lambda_m].$$

Finally, as mentioned above, we will make the following

\begin{Ass} The $\bS$-fixed closed substack $\QmapW^{\bS}$ is proper.
\end{Ass}
For example, this will always hold when the fixed point locus for the induced $\bS$-action
on the affine quotient $\mathrm{Spec}(A(W)^\G)$ is proper -- and therefore a finite set of points.
This follows immediately from the fact that the map $\eta:\QmapW\lra \mathrm{Spec}(A(W)^\G)$ is
$\bS$-equivariant and proper.

We can then define invariants as sums of equivariant residues via the virtual localization formula:
for equivariant cohomology classes $\gamma_1,\dots ,\gamma_k\in H^*_{\bS}(\WmodG,\QQ)$ and
nonnegative integers $n_1,\dots,n_k$, the equivariant descendant quasimap invariant is
$$\langle \tau_{n_1}(\gamma_1),\dots,\tau_{n_k}(\gamma_k)\rangle_{g,k,\beta}^{quasi}:=
\sum_F\int_{[F]^{\mathrm{vir}}}\frac{i_F^*(\prod_{i=1}^k{\psi}_i^{n_i}{ev}_i^*(\gamma_i))}{e(N_F^{\mathrm{vir}})},$$
the sum over the connected components $F$ of the fixed point locus
$\QmapW^{\bS}$, with equivariant embeddings
$$i_F:F\lra\QmapW.$$
By the above definition, the quasimap invariants lie in $\QQ(\lambda_1,\dots,\lambda_m)$.

Similarly, we may also define twisted invariants as in \S\ref{twisted}, using
invertible multiplicative classes of $\bS$-equivariant
vector bundles.

The main examples we have in mind of noncompact targets with a well-defined theory of
quasimap invariants are found among quiver varieties.

\begin{EG} {\bf Nakajima quiver varieties}.
Let $\Gamma$ be an oriented graph on a finite set of vertices $S$ with
edge set $E$, equipped with
source and target maps
$$i, o: E \rightarrow S.$$  Suppose we are given two dimension vectors
$$\overrightarrow{v}, \overrightarrow{u} \in (\mathbb{Z}_{\geq 0})^{S}.$$
The Nakajima quiver variety associated to this data is defined as follows.
For each vertex $s \in S$, we fix vector spaces $V_s$, $U_s$ of dimension
$v_s$ and $u_s$ respectively.  Consider the associated affine space
$$\mathbb{H} = \bigoplus_{e \in E} \left( \Hom(V_{i(e)}, V_{o(e)}) \oplus
\Hom(V_{o(e)}, V_{i(e)}) \right)\bigoplus_{s\in S} \left(\Hom(U_s, V_s)
\oplus \Hom(V_s, U_s)\right).$$

Given an element $(A_e, B_e, i_s, j_s)_{e\in E, s\in S} \in \mathbb{H}$,
and a vertex $s \in S$, we can associate the following endomorphism
$$\phi_s = \sum_{e, i(e) = s} [A_e,B_e] - \sum_{e, o(e) = s}[A_e,B_e] +
i_s\circ j_s \in \mathrm{End}(V_s).$$
Let $\G = \prod_{s\in S} GL(V_s)$ act on $\mathbb{H}$ with polarization
given by the determinant character.  We are interested in the GIT quotient
of the $\G$-invariant affine variety $W$ defined by the equations $\phi_s = 0$:
$$\mathcal{M}(\overrightarrow{v}, \overrightarrow{u}) = \{h \in \mathbb{H}
| \phi_s = 0 \textrm{ for all } s\} /\!\!/\G.$$

These quotients are typically noncompact.  However, if one considers the
$\bS=\mathbb{C}^*$-action defined by scaling $A_e$ and $i_s$, it is easy to
see that it preserves $W$ and $(\Spec \mathbb{C}[W]^\G)^{\mathbb{C}^*}$ is
compact (see, for instance \cite{nakajima}).

In particular, this gives us a rich source of examples where the
noncompact invariants make sense.  
\end{EG}

For other applications of quasimap theory with noncompact targets the reader is referred to the papers
\cite{Kim3}, \cite{KL}.

%%%%%%%%%%%%%%%%%%%%%%%%%%%%%%%%%%%%%%%%%%%%%%%%%%%%%%%%%%%%%%%%%%%%%%%%%%%%%%%
\section{Variants and applications}

\subsection{$\epsilon$-stable quasimaps}\label{e-stable}

As we have observed in \S 4 (see in particular Remarks \ref{general} and \ref{stablemaps}) one may view the moduli spaces
of stable quasimaps and stable maps with target $\WmodG$ as two instances of the same construction: they are 
(relatively) proper, finite type, Deligne-Mumford substacks of the Artin stack, locally of finite type parametrizing
prestable quasimaps to $\WmodG$, obtained by imposing a stability condition. Furthermore, one 
may think loosely of passing from stable maps to stable quasimaps as a process in which, by changing the stability condition,
rational tails are replaced with base points that keep track of their degrees (cf.
the proof of Proposition \ref{propernessProp}). It is natural to try to do this sequentially, removing first rational
tails of lowest degree, then those of next lowest degree and so on.

In the case of target $\PP^n$, this loose interpretation is literally true, and the result of
the procedure is the factorization of a natural morphism from the moduli of stable maps to the
moduli of stable quasimaps into a sequence of blow-downs to intermediate moduli spaces.
These intermediate spaces correspond to stability conditions depending on a rational parameter $\epsilon$.
This was first noticed and proved some time ago 
by Musta\c t\u a and Musta\c t\u a in \cite{MM1, MM2}. 

More recently, using moduli of stable quotients,
Toda (\cite{Toda}) has extended the story to Grassmannian targets $\mathrm{G} (r, n)$.
In this case, there are typically no morphisms between the various 
$\epsilon$-stable moduli spaces, so one obtains an instance of the general wall-crossing
phenomenon when varying the stability condition.

In this subsection we treat the variation of stability for general targets $\WmodG$.
Recall that a quasimap 
is said to be prestable if the base points are away from nodes and markings.

\begin{Def}\label{length}
The {\em length} $\ell(x)$ at a point $x\in C$ of a prestable quasimap $((C,p_i),P,u)$ to $\WmodG$ is defined by
$$\ell(x):= \min \left\{ \frac{(u^*s)_x}{m} \ | \;\; \ s\in H^0(W, L_{m\theta})^\G,\; u^*s\not\equiv 0, \;\; m>0\right\},$$
where $(u^*s)_x$ is the coefficient of the divisor $(u^*s)$ at $x$.
 \end{Def}
 
 The following properties follow easily:
 \begin{itemize}
\item For every $x\in C$ we have
 $$\beta(L_\theta)\geq\ell(x)\geq 0$$ 
 and $\ell(x)>0$ if and only if $x$ is a base point of the quasimap.
This is because the scheme-theoretic unstable locus $W^{us}$ is the subscheme defined by the
ideal $J_{W^{us}}\subset A(W)$ generated by $\{s \; |\; s\in H^0(W, L_{m\theta})^\G,\; m>0\}$.
 
 \item If $H^0(W,L_\theta)^\G$ generates $\oplus_{m\geq 0}H^0(W,L_{m\theta})^\G$ as an algebra over $A(W)^\G$
(for example, if the relatively ample line bundle $\cO(\theta)$ induced by $L_\theta$ on $\WmodG$ is relatively very ample over the affine quotient) then we can also write
 \begin{equation}\label{rescaled}\ell(x)= \min \left\{ (u^*s)_x \ | \;\; \ s\in H^0(W, L_{m\theta})^\G,\; u^*s\not\equiv 0,\;\; m>0\right\}.\end{equation}
 Alternatively, consider the ideal sheaf $\mathcal{J}$ of the closed subscheme $P\times_\G W^{us}$ of $P\times_\G W$.
 Then it is clear that $\ell(x)$ from \eqref{rescaled} satisfies
 \begin{equation}\label{multiplicity}\ell(x)=\mathrm{length}_x(\mathrm{coker}(u^*\mathcal{J}\lra \cO_C)),\end{equation}
 which may be viewed as the order of contact of $u(C)$ with the unstable subscheme $P\times_\G W^{us}$ at $u(x)$.
 \end{itemize}
 
 Let $((C,p_i),P,u)$ be a prestable quasimap of class $\beta$. Let $B\subset C$ be the base locus. Let 
 $$[u]:C\setminus B \ra \WmodG$$ be the induced map. By the prestable condition and the projectivity
 of $\WmodG\lra\Spec(A(W)^\G)$, $[u]$ extends to a regular map
 $$[u_{reg}]:C\lra \WmodG.$$
 Let $P_{reg}$ be the principal $\G$-bundle on $C$ which is obtained as the pull-back of $W^s\lra\WmodG$ via $[u_{reg}]$.
 Let $u_{reg} : C\lra P_{reg}\times_G W$ be the induced section. The data 
 $$((C,p_i),P_{reg}, u_{reg})$$
 is also a quasimap to $\WmodG$, so it has a class $\beta_{reg}\in \Hom(\Pic^\G(W),\ZZ)$.  
 
 \begin{Lemma}\label{degree-splitting} We have 
\begin{equation}\label{split}(\beta-\beta_{reg})(L_\theta)=\sum_{x\in B}\ell(x).\end{equation}
 In particular, $\beta-\beta_{reg}$ is $L_\theta$-effective.
 \end{Lemma}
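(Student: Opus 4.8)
The plan is to reduce the identity \eqref{split} to a purely local computation at each base point, using the description \eqref{multiplicity} of the length as an order of contact with the unstable subscheme. First I would observe that both sides of \eqref{split} are additive over the irreducible components of $C$ and vanish on components on which $u$ already maps into $W^s$ (there $\beta = \beta_{reg}$ locally and there are no base points), so it suffices to treat the case where $C$ is irreducible and $u$ has at least one base point. On such a component, $[u_{reg}]$ agrees with $[u]$ away from the finite set $B$, so the two principal bundles $P$ and $P_{reg}$ (equivalently, the associated line bundles $\cL_\theta$ and $\cL_\theta^{reg} = u_{reg}^*(P_{reg}\times_\G L_\theta)$) are canonically isomorphic on $C\setminus B$, and the difference in degrees is supported at $B$.

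Next I would make the comparison explicit. Pick $m>0$ so that $\cO(m\theta)$ is relatively very ample and choose a $\G$-equivariant section $s\in H^0(W,L_{m\theta})^\G$ with $u^*s\not\equiv 0$; then $u^*s$ is a section of $\cL_{m\theta}$ whose divisor is $\sum_{x} (u^*s)_x\, x$. The regularized map $[u_{reg}]$ is obtained precisely by dividing out the base locus: on $C\setminus B$ the section $u_{reg}^*s$ generates the corresponding line bundle wherever $u^*s$ did, and globally one has the relation $\cL_{m\theta} \cong \cL_{m\theta}^{reg}\otimes \cO_C(mD)$ where $D = \sum_{x\in B}\ell(x)\,x$ is the "base divisor." Here the key input is that $\ell(x)$ computed from \eqref{rescaled}–\eqref{multiplicity} is exactly the order to which the ideal sheaf of $P\times_\G W^{us}$ pulls back to vanish at $x$, i.e. the common vanishing order of all the sections $u^*s$ at $x$ once rescaled by $m$; dividing $u$ by this common factor at each base point is what produces $u_{reg}$. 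Taking degrees of both sides of the line-bundle relation gives $\deg \cL_{m\theta} = \deg \cL_{m\theta}^{reg} + m\sum_{x\in B}\ell(x)$, that is $m\beta(L_\theta) = m\beta_{reg}(L_\theta) + m\sum_{x\in B}\ell(x)$, and dividing by $m$ yields \eqref{split}.

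Finally, the last assertion is immediate: each $\ell(x)\ge 0$ by the first bullet following Definition \ref{length}, and $\ell(x)>0$ exactly at base points, so $(\beta-\beta_{reg})(L_\theta) = \sum_{x\in B}\ell(x) \ge 0$, with strict positivity when $B\neq\emptyset$; since $\beta-\beta_{reg}$ is manifestly realized as the class of a quasimap (one may take the disjoint union of $\PP^1$'s, or simply invoke Lemma \ref{effective}, which shows a class pairing nonnegatively with $L_\theta$ and vanishing iff it is zero is $L_\theta$-effective once we know it comes from the local contributions at the $z_l$), we conclude $\beta - \beta_{reg}\in \mathrm{Eff}(W,\G,\theta)$.

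The main obstacle I expect is making rigorous the claim that "dividing by the base locus" produces exactly the line-bundle relation $\cL_{m\theta}\cong\cL_{m\theta}^{reg}\otimes\cO_C(mD)$ with the multiplicity $\ell(x)$ on the nose — in other words, checking that the regularization $[u_{reg}]$ really removes a divisor of degree $\ell(x)$ and not something larger or smaller. This requires knowing that the minimum in Definition \ref{length} is achieved compatibly across all of $H^0(W,L_{m\theta})^\G$ for the relevant $m$ (so that \eqref{multiplicity} genuinely measures the contact order of $u(C)$ with $P\times_\G W^{us}$), and that the relatively very ample case reduction of \eqref{rescaled} is harmless after replacing $\theta$ by a multiple. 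One must also take a little care that passing between $C$ and an auxiliary very-ample linearization, and the (harmless) reindexing when $\theta$ is rescaled, does not change the $\QQ$-valued quantity $\ell(x)$, which is the content of the two bullet points immediately following Definition \ref{length}.
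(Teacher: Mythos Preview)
Your argument for the equality \eqref{split} is correct and is an expanded version of the paper's two-sentence proof: reduce (by rescaling $\theta$) to the relatively very ample case, and then read off the equality from the standard fact that extending a rational map into projective space amounts to dividing out the common vanishing divisor of the defining sections, so that $\cL_{m\theta}\cong\cL_{m\theta}^{reg}\otimes\cO_C(mD)$ with $D=\sum_{x\in B}\ell(x)\,x$.

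There is, however, a genuine gap in your final paragraph. You invoke Lemma \ref{effective} to conclude that $\beta-\beta_{reg}$ is $L_\theta$-effective, but that lemma runs in the opposite direction: it says that \emph{if} a class is the class of a quasimap \emph{then} its pairing with $L_\theta$ is nonnegative. It does not assert that every class pairing nonnegatively with $L_\theta$ lies in $\mathrm{Eff}(W,\G,\theta)$, and indeed that is false in general. Your alternative suggestion of realizing $\beta-\beta_{reg}$ on a disjoint union of $\PP^1$'s is a reasonable instinct, but you do not specify what quasimaps to place on these $\PP^1$'s or why their classes sum to $\beta-\beta_{reg}$ (as opposed to merely having the correct pairing with $L_\theta$); this would require a genuine local construction at each base point, which you have not supplied. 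The paper's own proof is silent on the ``in particular'' clause as well, and in every later use of the lemma what is actually needed is only the equality \eqref{split} together with the strict positivity of $(\beta-\beta_{reg})(L_\theta)$ when $B\neq\emptyset$---both of which you have correctly established.
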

 \begin{proof} It suffices to assume that $L_\theta$ descends to a relatively very ample line bundle on $\WmodG$, since
 the equality \eqref{split} is invariant under replacing $\theta$ by a positive multiple. But in this case the equality is immediate.
 \end{proof}

Fix a positive rational number $\epsilon$.

\begin{Def}\label{eDef} A prestable quasimap $((C,p_i),P ,u)$ is called {\em $\epsilon$-stable} if 
\begin{enumerate}
\item $\omega _C (\sum p_i ) \otimes \mathcal{L}_{\theta} ^\epsilon$ is ample.
\item  $\epsilon  \ell(x) \le 1$ for every point $x$ in $C$.
\end{enumerate}
\end{Def}

\begin{Rmk} The notion of $\epsilon$-stability depends obviously on $\theta$. However, this dependence 
is very simple under rescaling: 
for a positive integer $m$, the quasimap $((C,p_i),P ,u)$
is $\epsilon$-stable with respect to $\theta$ if and only if it is $(\frac{\epsilon}{ m})$-stable with respect to $m\theta$.

As a consequence, we may assume (and will assume from now on) that the polarization $\cO(\theta)$ on $\WmodG$ is 
relatively very ample over the affine quotient. Hence we can use \eqref{rescaled} and \eqref{multiplicity} as (equivalent) definitions of length.
In particular, $\ell(x)\in\ZZ_+$ for every point $x$ of $C$.
\end{Rmk}

Again, we list some immediate consequences of this definition.
\begin{itemize} 
\item The underlying curve of an $\epsilon$-stable quasimap 
may have rational components containing
only one special point (i.e., rational tails). However, any such component $C'$ must satisfy
\begin{equation}\label{tail}
\epsilon\deg(\cL_\theta|_{C'})>1.
\end{equation}
In addition, the degree of $\cL_\theta$ must be positive on rational components containing exactly
two special points.

\item A prestable quasimap $((C,p_i),P ,u)$ of some class $\beta$ is a stable quasimap (as in Definition \ref{stability})
if and only it is an $\epsilon$-stable quasimap for some $\epsilon \leq 1/\beta (L_\theta )$.

\item Assume that $(g,k)\neq(0,0)$. Then a prestable quasimap is a stable map to $\WmodG$ if and only 
if it is an $\epsilon$-stable quasimap for some $\epsilon >1 $. In the case $(g,k)=(0,0)$ the same is true
but with $\epsilon >2$.
\end{itemize}
Therefore, for the extremal values of $\epsilon$ we recover the notions of stable quasimaps and stable maps, respectively.

\begin{Prop}\label{finite autom} Let $\epsilon >0$ be fixed.
The automorphism group of an $\epsilon$-stable quasimap is finite and reduced.
\end{Prop}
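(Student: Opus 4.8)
The plan is to reduce, exactly as in the stable-quasimap case (see the second bullet after Definition \ref{family} and Proposition \ref{bunG}'s use of rigidity), to the statement that the group of infinitesimal automorphisms of an $\epsilon$-stable quasimap vanishes, and that there are no nontrivial automorphisms at all. Reducedness will follow from the vanishing of infinitesimal automorphisms; finiteness will follow once we rule out positive-dimensional automorphism groups. So the real content is: show that $\mathrm{Aut}((C,p_i),P,u)$ is $0$-dimensional and has trivial Lie algebra.

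First I would recall that an automorphism of $((C,p_i),P,u)$ consists of an automorphism $f$ of $(C,p_1,\dots,p_k)$ together with an isomorphism $\sigma:P\to f^*P$ compatible with the section $u$. Since $P$ is a principal $\G$-bundle with $\G$ having finite stabilizers on $W^s$ (indeed $\G$ acts freely there), the fiberwise part of the automorphism group is finite: over the dense open set where $u$ lands in $P\times_\G W^s$, compatibility with $u$ pins down $\sigma$ up to the (trivial) stabilizer, hence $\sigma$ is determined by $f$ on a dense open and therefore everywhere. Thus it suffices to bound $\mathrm{Aut}(C,p_i)$ subject to the constraint that the induced automorphism of $C$ lifts to one preserving $(P,u)$. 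The only components of $C$ on which $\mathrm{Aut}(C,p_i)$ can be positive-dimensional are rational components with at most two special points. A rational bridge (two special points) has automorphism group $\mathbb{C}^*$, and a rational tail (one special point) has automorphism group the two-dimensional affine group; in either case a nontrivial one-parameter subgroup fixes neither a generic point freely nor acts trivially.

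The key step is then to show that on such a destabilizing rational component $C'$ the data $(P|_{C'},u|_{C'})$ cannot be invariant under a positive-dimensional subgroup $H\subset\mathrm{Aut}(C',\text{special points})$. Suppose it were. Then $P|_{C'}$ would have to be $H$-equivariant, and $u|_{C'}$ an $H$-invariant section. On $\PP^1$ every $\G$-bundle splits (Lemma \ref{Breduction}, via Springer/Grothendieck), and $H$-equivariance forces strong constraints; concretely the induced line bundle $\cL_\theta|_{C'}=P|_{C'}\times_\G\CC_\theta$ together with its $H$-invariant section $u^*s$ (for suitable $s\in H^0(W,L_{m\theta})^\G$) would have to be compatible with the $H$-action. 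For a rational bridge this forces $\deg(\cL_\theta|_{C'})=0$ and $u|_{C'}$ to have no base points with $P|_{C'}$ trivial, contradicting the ampleness condition (1) in Definition \ref{eDef} — exactly the argument already given in the second bullet after Definition \ref{family}. For a rational tail, the affine-group action has no invariant point other than the node, so an invariant section forces $u|_{C'}$ to be entirely a base point there, i.e. $\ell$ is concentrated at one point with $\deg(\cL_\theta|_{C'})=\ell$; but then condition (2), $\epsilon\ell(x)\le1$, together with the rational-tail inequality $\epsilon\deg(\cL_\theta|_{C'})>1$ from \eqref{tail}, gives a contradiction. Hence no destabilizing component is $H$-invariant, $\mathrm{Aut}$ is finite, and the infinitesimal version of the same argument (replacing $H$ by a nonzero tangent vector in the Lie algebra) gives reducedness.

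The main obstacle I anticipate is the bookkeeping on the rational-tail case: one must be careful that the full degree of $\cL_\theta$ on the tail really is forced to sit at the single fixed point of the non-reductive automorphism group, using \eqref{multiplicity} (the length equals the order of contact with the unstable subscheme), and then that the two inequalities genuinely collide. Everything else is a routine transcription of the arguments already present in the paper for $\epsilon$ small.
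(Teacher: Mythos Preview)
Your overall strategy is right and matches the paper's: reduce to rational components with few special points, handle bridges by the old argument, and on a rational tail derive a contradiction between the ampleness inequality \eqref{tail} and the length bound $\epsilon\ell(x)\le 1$. But your execution of the tail case has a genuine gap.

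You assume a positive-dimensional subgroup $H\subset\mathrm{Aut}(C',\text{node})$ preserves $(P|_{C'},u|_{C'})$, and then invoke that ``the affine-group action has no invariant point other than the node'' to force the length to concentrate at a single point. That statement is true only for the full two-dimensional affine group; a one-dimensional subgroup such as the torus $\CC^*$ fixing the node and $\infty$ has \emph{two} fixed points. So nothing you have written rules out an $H$-invariant quasimap on $C'$ with a base point at $\infty$ and possibly a nonconstant $[u_{reg}]$. (Also note that if the length really did concentrate at the node, this would contradict the prestable condition directly, not condition (2).)

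The paper closes this gap by bringing in the regularized map $[u_{reg}]$ of \S\ref{e-stable} and Lemma~\ref{degree-splitting}. Any automorphism of the quasimap must preserve both $[u_{reg}]$ and the (finite) set of base points. On a rational tail $C'$: if $[u_{reg}]|_{C'}$ is nonconstant then $(C',\text{node},[u_{reg}]|_{C'})$ is a stable map and has finite automorphisms. If $[u_{reg}]|_{C'}$ is constant, Lemma~\ref{degree-splitting} gives $\sum_{x\in B'}\ell(x)=\beta_{C'}(L_\theta)>1/\epsilon$ by \eqref{tail}; a single base point would then violate condition (2), so $B'$ has at least two points, and together with the node these are three points on $\PP^1$ that any automorphism must fix. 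The case $(g,k)=(0,0)$ (irreducible $\PP^1$ with no special points) needs the same argument with $\beta_C(L_\theta)>2/\epsilon$, which you do not address. Once you insert $[u_{reg}]$ and this counting, your sketch becomes exactly the paper's proof.
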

\begin{proof} Assume first $(g,k)\neq (0,0)$. It suffices to check what happens on rational tails of the underlying curve $C$. Let $C'$ be a rational tail
and let $\beta_{C'}$ be the class of the induced quasimap on $C'$. Any automorphism of the quasimap must preserve
the map $[u_{reg}]$, as well as the base points. If $[u_{reg}]|_{C'}$ is not a constant map we are done, so assume it is
constant. Let $B'$ be the base locus supported on $C'$. By Lemma \ref{degree-splitting} and \eqref{tail} we get
$$\sum_{x\in B'}\ell(x)=\beta_{C'}(L_\theta) >\frac{1}{\epsilon}.$$ If $B'$ is a single point $x$, then $x$ violates condition $(2)$
in Definition \ref{eDef}.
Hence $B'$ must contain at least two distinct points.

If $(g,k)=(0,0)$, we have the additional case of a rational curve $C$ with no special points. The same argument will work, using now
that $\beta_{C}(L_\theta) >\frac{2}{\epsilon}$.
\end{proof}

\begin{Thm}\label{Thm3}
The stack $\QmapWe$ is a separated Deligne-Mumford stack of finite type, admitting a canonical
obstruction theory. If $W$ has at most lci singularities, then the obstruction theory is perfect. Furthermore, there is a natural morphism
$$\QmapWe\lra \mathrm{Spec}(A(W)^\G)$$
which is proper.
\end{Thm}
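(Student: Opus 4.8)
The plan is to establish Theorem \ref{Thm3} by adapting, essentially verbatim, the arguments already developed in Sections 3 and 4 for ordinary stable quasimaps, isolating the few places where the $\epsilon$-stability condition enters differently. The key observation is that $\epsilon$-stability, like stability, is an open condition that refines the prestable quasimap condition, so the global structure of the construction is unchanged; only the boundedness input and the properness argument need to be revisited.

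\medskip
\noindent\textbf{Finite type, Deligne--Mumford, obstruction theory.}
First I would note that the boundedness results of \S\ref{bdd} apply with no change: an $\epsilon$-stable quasimap has class $\beta$ with $\beta(L_\theta)$ fixed, and the proof of Theorem \ref{boundedness-bundle} (hence of Theorem \ref{boundedness-qmap}) only used that $\beta(L_\theta)$ is fixed, as recorded in the Remark at the end of \S\ref{bdd}. For the analogue of Corollary \ref{boundedness-curve} one argues as in \cite{CK}: condition (1) of Definition \ref{eDef} plus $\epsilon\ell(x)\le 1$ bound the number of components, since on each rational tail $C'$ one has $\epsilon\deg(\cL_\theta|_{C'})>1$, so $\deg(\cL_\theta|_{C'})>1/\epsilon$, and the total is bounded by $\beta(L_\theta)$. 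Given boundedness, the construction of \S\ref{construction} goes through: $\QmapWe$ maps to $\BunGbeta$ via $\mu$, and the fibers are cut out inside the relative Hilbert scheme of $\fP\times_\G\overline{W}/\BunGbeta^\circ$ by the same open conditions (1)--(3) as before, with condition (4) replaced by the pair of open conditions of Definition \ref{eDef} — ampleness of $\omega_C(\sum p_i)\ot\cL_\theta^\epsilon$ and $\epsilon\ell(x)\le 1$ for all $x$. Openness of the length condition follows from \eqref{multiplicity}, since the length is the fiberwise length of the cokernel of $u^*\mathcal{J}\lra\cO_\cC$, which is upper semicontinuous. Thus $\QmapWe$ is an Artin stack of finite type with a schematic, finite-type morphism to $\BunG$; Proposition \ref{finite autom} shows it is Deligne--Mumford. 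The canonical relative obstruction theory \eqref{relobstruction} over $\BunG$ exists for any such open substack — this is exactly Remark \ref{stablemaps} — and its perfection when $W$ has at most lci singularities is proved exactly as in \S 4, since that argument only used that the base points are disjoint from nodes/markings (prestability) together with the lci hypothesis.

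\medskip
\noindent\textbf{Properness over the affine quotient.}
This is the step I expect to be the main obstacle, because the semistable-reduction argument in Proposition \ref{propernessProp} must now be refined so that the limit satisfies $\epsilon\ell(x)\le 1$ rather than merely being stable. Separatedness is unchanged: the valuative-criterion argument of \cite{CK, MOP}, using that two $\G$-bundles on a regular surface agreeing off finitely many points agree everywhere (Hartogs), applies verbatim. For completeness, I would run the same construction: given a family over $\Delta^\circ$, take the regularization $[u_{reg}]$, complete it to a family of stable maps $[\widehat u]:\widehat C\to\WmodG$ over $\Delta$, pull back the bundle $W^s\to\WmodG$ to get $(\widehat P,\widehat u)$, contract rational tails not carrying markings to obtain $\bar C$, glue the old pair $(P,u)$ with the new pair $(P_1,u_1)$ away from the points $z_l$ and the sections $y_j$, extend the bundle by Lemma \ref{Pext} and the section by Hartogs (using $W$ affine). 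The new point is to choose \emph{how much} of each rational tail to contract: rather than contracting a tail completely, one contracts it only down to the point where the accumulated length would exceed $1/\epsilon$, leaving behind a possibly shorter chain of rational components on which $\deg\cL_\theta$ is positive and the length bound holds. Concretely, using Lemma \ref{degree-splitting}, the degree $\beta-\beta_{reg}$ carried by the base locus equals $\sum_x\ell(x)$, so one stabilizes the modification of $\bar C$ with respect to $\epsilon$: blow down a rational tail completely precisely when it forces $\epsilon\ell(z_l)\le 1$ at the attaching point, and otherwise keep enough of the chain. One then checks that the resulting limit is prestable, satisfies $\epsilon\ell(x)\le 1$ everywhere by construction, and satisfies the ampleness condition (1) — the latter exactly as in the stable-quasimap case, distinguishing components carrying base points (where $\deg\cL_\theta>0$ by Lemma \ref{effective}) from those on which the quasimap restricts to a non-constant stable map. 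Finally, the relative version over $\Spec(A(W)^\G)$ follows formally, as in the last paragraph of the proof of Proposition \ref{propernessProp}, since the universal curve still maps constantly to the affine quotient on fibers and $\eta$ is proper by the valuative criterion just verified. The delicate bookkeeping of the degrees carried by the base points is precisely the analysis referred to parenthetically in the proof of Proposition \ref{propernessProp}; carrying it out carefully for arbitrary $\epsilon$ is the heart of the matter.
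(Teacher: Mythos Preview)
Your proposal is correct and follows essentially the same route as the paper: carry over the finite-type/DM/obstruction-theory arguments of \S\S 3--4 verbatim, and modify only the completeness half of the valuative criterion by contracting rational trees selectively according to an $\epsilon$-dependent threshold. The one point where the paper is sharper is the contraction criterion itself --- the paper contracts a tree $\Gamma$ precisely when its \emph{total degree} $\deg(\Gamma,L_\theta)$ (as in \eqref{degD}) is $\le 1/\epsilon$, and then runs a global degree count (equations \eqref{equ1}--\eqref{equ4}) forcing the a priori semicontinuity inequalities $\ell(z_l)\ge\deg(\Gamma_l,L_\theta)$ and $\ell(y_{j,0})\ge\ell(y_j)$ to be equalities; this is exactly the ``delicate bookkeeping'' you flag, and phrasing the criterion in terms of the degree rather than the (not-yet-known) resulting length $\ell(z_l)$ is what makes it implementable.
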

\begin{proof} Most of the proof of Theorem \ref{Thm1} from \S 4 carries over unchanged. First, the proof of Corollary \ref{boundedness-curve}
works for any $\epsilon$, so we obtain (see Remark \ref{general}) that the stack is algebraic and of finite type. Proposition \ref{finite autom}
is then used to conclude that the stack is Deligne-Mumford. The natural obstruction theory is the $\mu$-relative obstruction theory 
for the forgetful map
$$\mu:\QmapWe\lra\BunG, $$ which is given by
\eqref{relobstruction}. It is perfect if $W$ has at worst lci singularities (cf. Remark \ref{stablemaps}).

In fact, the only part of the argument that needs modification is the
proof of completeness in Proposition \ref{propernessProp}. Namely, when passing from 
$\widehat{C}$ to $\bar{C}$ as in that proof, we do so by contracting
only some of the trees $\Gamma_i$ in the central fiber $\widehat{C}_0$. We go briefly through
the details.

Let $(\Delta, 0)$ be a smooth pointed curve and let $\Delta ^\circ = \Delta \setminus\{0\}$. 
Let $((C, p_i), P, u)$ be a $\Delta ^\circ$-family of $\epsilon$-stable quasimaps. 
Let $B$ denote the base locus of the family. We may regard $B$ as additional sections $y_j$, and assume
that $(C,p_i,y_j)\lra\Delta^\circ$ is a family of prestable pointed curves with irreducible fibers and
nonsingular total space $C$. By shrinking $\Delta$ if necessary, we may also assume that the
length of the quasimaps in the family along each section $y_j$ is constant. Denote this length by $\ell(y_j)$.
By assumption we have $\ell(y_j)\leq 1/\epsilon$.

Let $[u]:C\setminus B\lra W/\!\!/G$ be the induced map and let $[u_{reg}]$ be its extension to $C$ (shrink $\Delta$, if needed).
The proof of Proposition
\ref{finite autom} shows that  $((C,p_i,y_j),[u_{reg}])$ is a $\Delta^\circ$-family of stable maps to $\WmodG$, of class $\beta_{reg}$. 
By Lemma \ref{degree-splitting} we have 
\begin{equation}\label{equ1}\beta(L_\theta)-\beta_{reg}(L_\theta)=\sum_j \ell(y_j).\end{equation}
Let
$$(\widehat{C},p_i,y_j)\lra\Delta,\;\;\;\; [\widehat{u}]:\widehat{C}\lra\WmodG$$
be the unique family of stable maps extending $((C,p_i,y_j),[u_{reg}])$.
For each subcurve $D$ in the central fiber $\widehat{C}_0$ we define its total degree with respect
to $L_\theta$ to be 
\begin{equation}\label{degD}\deg(D,L_\theta):=\beta_{reg}|_D(L_\theta) +\sum_{y_{j,0}\in D}\ell(y_j),\end{equation}
where $y_{j,0}$ denotes the intersection point of the section $y_j$ with $\widehat{C}_0$.

Let $\Gamma$ be a tree of rational curves in $\widehat{C}_0$ which
contains none of the markings $p_i$ and meets the rest of the curve $\overline{(\widehat{C}_0\setminus\Gamma)}$ in a single 
point $z$. Note that such a tree always has positive total degree with respect to $L_\theta$ by stability of the map $[u_{reg}]$.
We contract the tree $\Gamma$ exactly when its
total degree satisfies 
\begin{equation}\label{degGamma}\deg(\Gamma,L _{\theta})\leq\frac{1}{\epsilon}.\end{equation}

Now the exact same reasoning as in the proof of Proposition \ref{propernessProp} gives
a family of prestable quasimaps
$$((\bar{C},p_i),\bar{P},\bar{u})$$ of class $\beta$,
extending the original family over $\Delta^\circ$. We claim that the limit quasimap
is also $\epsilon$-stable.
Let $\Gamma_1,\dots , \Gamma_N$ be all trees satisfying \eqref{degGamma} that have been contracted and let $z_1,\dots z_N$
be their respective attaching points in the special fiber $\bar{C}_0$. 
Then 
\begin{equation}\label{equ2}
\beta(L_\theta)=\beta_{reg}\left|_{\overline{\widehat{C}_0\setminus\cup_l\Gamma_l}}\right.(L_\theta)+\sum_{l=1}^N\ell(z_l)
+\sum_{y_{j,0}\not\in \cup_l\Gamma_l}\ell(y_{j,0})\end{equation}
by Lemma \ref{degree-splitting} applied to the limit quasimap.

By a semicontinuity argument we have
\begin{equation}\label{equ3}
\ell(y_j)\leq \ell(y_{j,0}),\;\;\;\; \forall y_{j,0}\not\in\cup_l\Gamma_l.\end{equation}
The length at an attachment point $z_l$ is made out of contributions from two sources. First, it is clear that the restriction of 
the stable map $[u_{reg}]$ to $\Gamma_l$ contributes $\beta_{reg}|_{\Gamma_l}(L_\theta)$ to $\ell(z_l)$. Second,
each of the points $y_{j,0}\in\Gamma_l$ contributes at least $\ell(y_j)$ by semicontinuity. We deduce
\begin{equation}\label{equ4}\deg(\Gamma_l,L _{\theta})=\beta_{reg}|_{\Gamma_l}(L_\theta)+\sum_{y_{j,0}\in \Gamma_l}\ell(y_j)\leq\ell(z_l),\;\;\; l=1,\dots, N
\end{equation}
From \eqref{equ1}, \eqref{equ2}, \eqref{equ3}, and \eqref{equ4},
\begin{equation}\begin{split}
\beta(L_\theta)&=\beta_{reg}(L_\theta)+\sum_j \ell(y_j)\\
&=\beta_{reg}\left|_{\overline{\widehat{C}_0\setminus\cup_l\Gamma_l}}\right.(L_\theta)
+\sum_l\beta_{reg}|_{\Gamma_l}(L_\theta)+\sum_j \ell(y_j)\\
&\leq \beta_{reg}\left|_{\overline{\widehat{C}_0\setminus\cup_l\Gamma_l}}\right.(L_\theta) +\sum_{l=1}^N\ell(z_l)+
\sum_{y_{j,0}\not\in \cup_l\Gamma_l}\ell(y_{j,0})=\beta(L_\theta).
\end{split}\nonumber
\end{equation}
It follows that all inequalities in \eqref{equ3} and \eqref{equ4} must be equalities.
Hence, for every $1\leq l\leq N$ and every $y_{j,0}\not\in \cup_l\Gamma_l$
$$\ell(z_l)=\deg(\Gamma,L _{\theta})\leq\frac{1}{\epsilon}\;\;\mathrm{and}\;\; \ell(y_{j,0})=\ell(y_j)\leq\frac{1}{\epsilon} $$ satisfy condition $(2)$
in Definition \ref{eDef}. 
The ampleness condition $(1)$ of Definition \ref{eDef} holds by construction on the remaining rational tails
in $\bar{C}$, while on rational components with exactly two special points it is verified in the same way as
in the proof of Proposition \ref{propernessProp}.
We conclude that the limit thus constructed is an $\epsilon$-stable quasimap.
\end{proof}

Theorem \ref{Thm3} is a generalization of the corresponding results in \cite{MM1, MM2} for projective spaces and \cite{Toda} for Grassmannians.
The chamber structure for the stability parameter also extends to general targets $\WmodG$. We restrict for brevity to describing it when
$(g,k)\neq(0,0),(0,1)$ and leave the remaining cases to the reader.
We have already noted that
$$\QmapWe=\QmapW\;\; \mathrm{for}\;\; \epsilon\in\left(0,\frac{1}{\beta(L_\theta)}\right],$$
while
$$\QmapWe=\overline{M}_{g,k} (\WmodG,\beta) \;\; \mathrm{for}\;\; \epsilon\in(1,\infty).$$
It is equally easy to see that for every integer $1\leq m\leq \beta(L_\theta)-1$, the moduli space of $\epsilon$-stable quasimaps to $\WmodG$
with fixed numerical data $(g,k,\beta)$ stays constant when $\epsilon\in\left(\frac{1}{m+1},\frac{1}{m}\right]$.
The underlying curve of a quasimap parametrized by this constant moduli space is allowed only
rational tails of total degree at least $m+1$ (with respect to $L_\theta$), while the order of contact of the quasimap with the unstable locus $W^{us}$ at each base
point must be at most ${m}$.

In contrast with Toda's results for Grassmannians in \cite{Toda}, it is not true in general that 
$\epsilon$-quasimap invariants remain
unchanged when crossing a wall $\epsilon=\frac{1}{m}$. Some aspects of this are
discussed in the work \cite{CK2}, where it is proved for example that the equality still holds for toric Fano varieties,
but fails in the absence of the Fano condition. The wall-crossing contributions in the non-Fano case (and genus zero) may be viewed as being responsible
for Givental's mirror formulas, \cite{G}.
Similar interpretations may be given for $K$-nef complete intersections in toric varieties. We believe this is a general phenomenon and plan to
study it elsewhere.

%%%%%%%%%%%%%%%%%%%%%%%%%%%%%%%%%%%%%%%%%%%%%%%%%%%%%%%%%%%%%%%
\subsection{Quasimaps with one parametrized component} This version of quasimap moduli spaces
is discussed in detail
in \cite{CK} for the case of toric varieties. They are the quasimap analogues of
the so-called {\it graph spaces} in Gromov-Witten theory and
lead to extensions of the  $I$-functions
introduced by Givental to the large parameter space, i.e., to the entire Frobenius manifold given
by the Gromov-Witten theory of a target space $\WmodG$.

The quasimaps with one parametrized $\PP^1$ and the associated $I$-functions will play a
significant role in the forthcoming work \cite{CK2}, which will deal
with comparisons between quasimap and GW theories for certain classes of targets.
We restrict here to giving the definitions and a few basic facts.

Fix integers $g,k\geq 0$, a GIT quotient $\WmodG$ with $W$ affine and lci as in \S 4, and a curve class $\beta$. In addition, fix a nonsingular irreducible projective
curve $D$.

\begin{Def} \label{parametrized qmap}
A stable, $k$-pointed quasimap of genus $g$ and class $\beta$ to $\WmodG$ is specified by the data
$$( (C,p_1,\dots ,p_k), P, u, \varphi),$$
where

\begin{itemize}

\item $(C,p_1,\dots ,p_k)$ is a connected, at most nodal, projective curve of genus $g$, and $p_i$ are distinct nonsingular points of $C$,

\item $P$ is a principal $\G$-bundle on $C$,

\item $u$ is a section of the fiber bundle $$P\times_{\G}W\rightarrow C ,$$

\item $\varphi : C\lra D$ is a regular map,

 \end{itemize}
 subject to the conditions:
 \begin{enumerate}

 \item (parametrized component) $\varphi_*[C]=[D]$. Equivalently, there is a distinguished component $C_0$ of $C$ such that
 $\varphi$ restricts to an isomorphism $C_0\cong D$ and $\varphi (C\setminus C_0)$ is zero-dimensional (or empty, if $C=C_0$).

 \item (generic nondegeneracy) There is a finite (possibly empty) set of  points $B\subset C$
such that $u(C\setminus B)$ is contained in the stable locus $P\times_{\G}W^s$.

\item (prestable) The base locus $B$ is disjoint from the nodes and markings on $C$.

\item (stability) The line bundle 
$$\omega _{{C}}(p_1+\dots +p_k) \ot {\mathcal L}^\epsilon \ot \varphi^*(\omega_D^{-1}\ot{\mathcal M})$$ 
is ample for every rational $\epsilon > 0$, where
$\cL:=\cL_\theta=P\times_{\G}\CC_\theta $ and ${\mathcal M}$ is any ample line bundle
on $D$. 
(Equivalently, $\omega _{\tilde{C}}(\sum p_i+\sum q_j) \ot {\mathcal L}^\epsilon$ is ample, where
$\tilde{C}$ is the closure of $C\setminus C_0$, $p_i$ are the markings on $\tilde{C}$, and $q_j$ are the nodes $\tilde{C}\cap C_0$.)

\item the class of the quasimap $( (C,p_1,\dots ,p_k), P, u)$ is $\beta$.
 \end{enumerate}

\end{Def}

We denote by
$$\mathrm{Qmap}_{g,k}(\WmodG,\beta; D)$$
the stack parametrizing the stable quasimaps in Definition \ref{parametrized qmap}.
Note that it is empty if $g< g(D)$. However, since stability imposes no condition on the distinguished component, the
inequality $2g-2+k\geq 0$ is not required anymore.
By the same arguments as in \S 4 we obtain the following.

\begin{Thm}\label{Thm4}
If $\WmodG$ is a proper, then  $\mathrm{Qmap}_{g,k}(\WmodG,\beta; D)$
is a proper Deligne-Mumford stack of finite type, with a perfect obstruction theory.
In general, it is proper over the affine quotient $\mathrm{Spec}(A(W)^\G)$.
\end{Thm}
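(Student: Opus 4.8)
The plan is to imitate, almost verbatim, the arguments already carried out in Sections 3 and 4 for $\QmapW$, making only the modifications forced by the extra datum $\varphi\colon C\to D$ and the twisted stability line bundle. First I would observe that the analogue of Corollary \ref{boundedness-curve} holds: on the distinguished component $C_0\cong D$ the quasimap contributes a fixed part $\beta_0$ of the class (determined by $\varphi$ and the induced map to $\WmodG$), and the complementary subcurve $\tilde C$ together with its markings $p_i$ and the nodes $q_j=\tilde C\cap C_0$ satisfies the ordinary stability condition $\omega_{\tilde C}(\sum p_i+\sum q_j)\otimes\cL^\epsilon$ ample; hence Lemma \ref{effective} bounds the number of components of $\tilde C$ in terms of $g,k,\beta$, and $g(D)$. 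Combined with Theorem \ref{boundedness-bundle} (applied to each component, and to the finitely many étale covers arising from a disconnected normalization as in the proof of Theorem \ref{boundedness-qmap}), this gives boundedness of the underlying bundles, so the moduli functor factors through an open substack of finite type of $\BunG\times_{\fMgk}(\text{Hom stack to }D)$.

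Next I would reconstruct the stack itself exactly as in \S\ref{construction}: one works over an open finite-type substack of the relative moduli of $((C,p_i),P,\varphi)$ with $\varphi$ of the prescribed type, makes the relevant twisted line bundle relatively ample so that the universal curve becomes projective, passes to a relative Hilbert scheme of $\fP\times_\G\overline W$ over this base, and cuts out by the (open) conditions that the subscheme be the graph of a section landing generically in $W^s$, with base points away from nodes, markings \emph{and the distinguished component}, and satisfying stability (1)–(5). The automorphism finiteness is immediate because an automorphism must be the identity on $C_0\cong D$ and, on each component of $\tilde C$, is constrained exactly as in the stable-quasimap case (the argument after Definition \ref{family}, together with Proposition \ref{finite autom} if one allows $\epsilon$-versions). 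Thus $\mathrm{Qmap}_{g,k}(\WmodG,\beta;D)$ is a Deligne--Mumford stack of finite type, and the canonical relative obstruction theory over $\BunG$ is again \eqref{relobstruction}, perfect when $W$ has at worst lci singularities by the arguments of \S 4.4–4.5 (the presence of $\varphi$ does not affect the cotangent complex of the section $u$).

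For properness over $\Spec(A(W)^\G)$ I would run the valuative-criterion argument of Proposition \ref{propernessProp}. Separatedness follows as there, using that on $C_0$ the map $\varphi$ rigidifies the distinguished component so no extra automorphisms can appear in the limit. For completeness, given a family over $\Delta^\circ$, one first takes the limit of the stable map $((C,p_i,y_j),[u_{reg}],\varphi)$ — here the target is $\WmodG\times D$ and $\varphi$ is recorded by the second factor, the stable-map limit exists by properness of Kontsevich space — then contracts exactly the rational tails of total $L_\theta$-degree allowed by the stability parameter, extends the principal bundle by Lemma \ref{Pext}, and extends the section over the finitely many nonsingular points by Hartogs, crucially using $W$ affine. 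The one new point is that contracted rational tails are attached to $\tilde C$, never to $C_0$ (a tree meeting the rest of the curve in a single point cannot contain $C_0$, since $\varphi|_{C_0}$ is nonconstant of degree $[D]$); so the distinguished component survives the contraction and $\varphi$ descends to the contracted surface. Stability of the limit is checked component by component exactly as before. The relative version over $\Spec(A(W)^\G)$ is obtained, as in Proposition \ref{propernessProp}, from the fact that $W\to\Spec(A(W)^\G)$ is a categorical quotient, so the induced map from the universal curve is constant on fibers and descends. The main obstacle I expect is purely bookkeeping: verifying that the contraction procedure interacts correctly with $\varphi$ and with the twisted stability line bundle $\omega_C(\sum p_i)\otimes\cL^\epsilon\otimes\varphi^*(\omega_D^{-1}\otimes\cM)$, i.e.\ that the limit one constructs genuinely satisfies condition (4) of Definition \ref{parametrized qmap}; everything else is a routine transcription of Section 4.
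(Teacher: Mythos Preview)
Your approach is correct and is exactly what the paper intends: it simply asserts that ``by the same arguments as in \S 4'' the theorem follows, and you have spelled out the obvious modifications. There is, however, one genuine slip that would break the argument. In your construction step you impose that base points lie ``away from nodes, markings \emph{and the distinguished component}.'' This last condition is not part of Definition~\ref{parametrized qmap}: condition (3) there only requires base points to avoid nodes and markings, and base points on $C_0$ are certainly permitted (they appear prominently in the description of the $\CC^*$-fixed loci in \S\ref{I}, where the entire class $\beta$ may sit as a base point at $0\in C_0$). If you impose your stronger condition you obtain a strictly smaller open substack which is \emph{not} proper: in the valuative-criterion argument, a rational tail $\Gamma$ in the stable-map limit attached directly to $C_0$ at a point $z$ will, after contraction, produce a base point at $z\in C_0$, and the limit you construct would then lie outside your moduli space.

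This is tied to your later claim that ``contracted rational tails are attached to $\tilde C$, never to $C_0$,'' which is also not right as stated: rational tails may well be attached to $C_0$, and contracting them is precisely what produces base points on the parametrized component. What is true---and what your parenthetical correctly argues---is that a contracted tree cannot \emph{contain} $C_0$, since $C_0$ carries the class $[D]$ under $\varphi$; hence $C_0$ survives the contraction and $\varphi$ descends to $\bar C$. Once you drop the spurious condition on base points and correct this phrasing, the rest of your outline goes through without change.
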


The most interesting case is when
$g=0$ and (necessarily) $D=\PP^1$, to which we restrict from now on.
The underlying curve for
a point in $\mathrm{Qmap}_{0,k}(\WmodG,\beta; \PP^1)$ is a $k$-pointed
tree of rational curves with one parametrized component $C_0\cong\PP^1$, that is, a
point in the stack $\widetilde{\PP^1[k]}$, the Fulton-MacPherson
space of  (not necessarily stable) configurations of $k$ distinct points on $\PP^1$. This is a smooth Artin
stack, locally of finite type (see \S 2.8 in \cite{KKO}). As in \S 1, we have a smooth Artin stack
$$\BunG\lra\widetilde{\PP^1[k]},$$ parametrizing principal $\G$-bundles on the fibers of the
universal curve over $\widetilde{\PP^1[k]}$. 

Forgetting the appropriate data, we obtain morphisms of stacks
$$\mu:\mathrm{Qmap}_{0,k}(\WmodG,\beta; \PP^1)\lra\BunG$$
and
$$\nu:\mathrm{Qmap}_{0,k}(\WmodG,\beta; \PP^1)\lra \widetilde{\PP^1[k]}.$$
The natural obstruction theory to consider is the $\mu$-relative one, given again by
\eqref{relobstruction}.

As before, if $W=V$ is a vector space, one sees easily that we have an Euler sequence
$$0\lra\mathfrak{P}\times_{\G}\mathfrak{g}\lra\mathfrak{P}\times_{\G}V\lra\cF\lra 0$$
on the universal curve
$$\pi: \cC\lra \mathrm{Qmap}_{0,k}(\WmodG,\beta; \PP^1)$$ and that the $\mu$-relative perfect obstruction theory is
given by
$$\left( R^\bullet \pi_*(\mathfrak{P}\times_{\G}V)\right )^\vee ,$$
while the $\nu$-relative perfect obstruction theory is
$$\left( R^\bullet \pi_*\cF\right )^\vee .$$

\begin{EG} The unpointed genus zero quasimap moduli spaces compactify
the $\Hom$-schemes parametrizing maps from $\PP^1$ to $\WmodG$ and
have been studied extensively earlier for several special cases of targets,
though it was not widely recognized before that the important common feature of all these examples is
that they are all GIT quotients.
\begin{itemize}

\item Let $W=\CC^N$ and  $\G=(\CC^*)^r$. Then $W/\!\!/\G$ is a toric variety
and $\mathrm{Qmap}_{0,0}(W/\!\!/\G,\beta;\PP^1)$ is the ``toric compactification" of $\mathrm{Map}_{\beta}(\PP^1,W/\!/\G)$
introduced by Givental, and by Morrison and Plesser
\cite{G}, \cite{MP}.

\item Let $W=\Hom (\CC^r,\CC^n)$ and $\G=GL(r,\CC)$ with the obvious action, linearized by the determinant.
Then $W/\!\!/\G$ is the Grassmannian $\mathrm{G}(r,n)$ of $r$-dimensional subspaces in
$\CC^n$ and $\mathrm{Qmap}_{0,0}(W/\!\!/\G,d;\PP^1)$ is the Quot scheme
parametrizing degree $d$ and rank $n-r$ quotients of $\mathcal{O}_{\PP^1}^{\oplus n}$ on $\PP^1$, first studied
in detail by Str\o mme,
\cite{S}. More generally, the ``Laumon spaces", or ``hyperquot schemes" on $\PP^1$ of \cite{L}, see also \cite{C}, \cite{Kim2} are quasimap moduli
spaces to flag varieties of type $A$, viewed as GIT quotients of a vector space by a product of $GL$'s, as described
e.g. in \cite{BCK2}.

\item Consider the vector space
$$V:=\Hom(\CC^n,\CC^n)\oplus\Hom(\CC^n,\CC^n)\oplus\Hom(\CC^r,\CC^n)
\oplus\Hom(\CC^n,\CC^r)$$ and let $\G=GL(n,\CC)$ act on the right by
$$(A,B,i,j)\cdot g=(g^{-1}Ag,g^{-1}Bg,g^{-1}i,jg),$$
with linearization given by the determinant character.

Let $W$ be the $\G$-invariant nonsingular affine subvariety given by the ADHM equation
$$[A,B]-ij=0.$$

Then $W/\!\!/\G$ is the moduli space of rank $r$ torsion-free sheaves on $\PP^2$, trivialized on the
line at infinity (in particular, for $r=1$ we get the
Hilbert scheme $\mathrm{Hilb}_n(\CC^2)$ of $n$ points in the plane), see e.g. \cite{N},
and $\mathrm{Qmap}_{0,0}(W/\!\!/\G,d;\PP^1)$ is
Diaconescu's moduli space of ADHM sheaves on $\PP^1$, see \cite{D}.

Note that $W$ is the zero locus of a section of the equivariant $\G$-bundle $\Hom(\CC^n,\CC^n)\times V$,
as discussed in Example \ref{ci}. Note also that neither $\VmodG$, nor $\WmodG$ are proper; in fact, these examples are special
cases of quiver variety targets, as discussed in \S \ref{noncompact}
\end{itemize}
\end{EG}
\begin{Rmk}\label{e-parametrized} It is immediate to extend Definition \ref{parametrized qmap} and Theorem \ref{Thm4}  to 
include all $\epsilon$-stability conditions, as in \S \ref{e-stable}. The same is
true about the material on $I$-functions discussed in \S \ref{I} below.
We leave this to the reader. 
\end{Rmk}

\subsection{The $I$-function of $W/\!\!/\G$}\label{I}
One of the most important objects in genus zero Gromov-Witten theory is the $J$-function of
a target variety. For smooth toric varieties Givental
introduced in \cite{G} their {\it small $I$-functions} via the unpointed genus zero parametrized quasimap spaces and proved
that they are related to the small $J$-functions via ``mirror transformations".
In \cite{CK},
the parametrized quasimap spaces with markings were used to define the big $I$-function of a smooth toric variety.
Here we generalize these notions to all GIT quotients $\WmodG$.

The moduli spaces $\mathrm{Qmap}_{0,k}(\WmodG,\beta; \PP^1)$ come equipped with a natural
$\CC^*$-action, induced from the usual action on $\PP^1$. The $I$-function will be defined via certain equivariant
residues for this action, so we start by describing the $\CC^*$-fixed loci in $\mathrm{Qmap}_{0,k}(\WmodG,\beta; \PP^1)$.

A stable quasimap
$$( (C,p_1,\dots ,p_k), P, u, \varphi)$$
is fixed by $\CC^*$ if and only if it satisfies the following properties:
\begin{itemize}

\item $C_0\cap \overline{(C\setminus C_0)}\subset\{ 0,\infty\}$; here $C_0$ is identified with $\PP^1$ via $\varphi$.

\item There are no markings on $C_0\setminus \{0,\infty\}$.

\item The curve class $\beta$ is ``concentrated at $0$ or $\infty$". Precisely, this means the following: there are no
base points on $C_0\setminus \{0,\infty\}$ and the resulting map
$$C_0\setminus \{0,\infty\} \lra \WmodG$$
is constant.
\end{itemize}

The absolute perfect obstruction theory
$$\mathbb{E}^\bullet=[E^{-1}\lra E^0]$$
of $\mathrm{Qmap}_{0,k}(\WmodG,\beta; \PP^1)$ is $\CC^*$-equivariant, hence each
component $F$ of the fixed point locus has an induced perfect obstruction theory, given by the $\CC^*$-fixed part
$\mathbb{E}^{\bullet, f}_F$ of the complex $\mathbb{E}^{\bullet}|_F$, and a virtual normal bundle $N_{F}^\mathrm{vir}$ given by the moving part $\mathbb{E}^{\bullet, m}_F$.

In fact, for defining the $I$-function we are only interested in the component ${F_0}$ of the fixed point locus for which the curve class $\beta$
is concentrated only at $0\in C_0$, i.e., the component parametrizing $\CC^*$-fixed quasimaps for which
$C_0\cap \overline{(C\setminus C_0)}=\{0\}$ and there is no base point or marking at $\infty$. The cases $k=0$ and $k\geq 1$ exhibit different behavior.

{\it Case $k\geq 1$}:  We have ${F_0}\cong\mathrm{Qmap}_{0,k+1}(\WmodG,\beta)$ and
$\mathbb{E}^{\bullet, f}_{F_0}$ is the usual obstruction theory of $\mathrm{Qmap}_{0,k+1}(\WmodG,\beta)$.
The (virtual) codimension of $F_0$ in
$\mathrm{Qmap}_{0,k}(\WmodG,\beta; \PP^1)$ is equal to $2$.
As explained in Lemma 7.2.7 of \cite{CK}, the
virtual normal bundle has equivariant Euler class
$$e^{\CC^*}(N_{F_0}^\mathrm{vir})=z(z-\psi),$$
where $\psi=\psi_{k+1}$ is the cotangent class at the last marking and $z$ is the equivariant parameter (i.e., $H^*_{\CC^*}(point)\cong\CC[z]$).

Associating to each $\CC^*$-fixed stable quasimap in $F_0$ the point in $\WmodG$ which is the image of the constant map
$$C_0\setminus \{0,\infty\} \lra \WmodG,$$
gives a morphism
\begin{equation}\label{eval}
F_0\lra\WmodG
\end{equation}
which is clearly just the evaluation map
$$ev_{k+1}:\mathrm{Qmap}_{0,k+1}(\WmodG,\beta)\lra\WmodG$$
at the last marking.

{\it Case $k=0$}: In this case the above identification of $F_0$ with a space of unparametrized quasimaps is not
possible, as $\mathrm{Qmap}_{0,1}(\WmodG,\beta)$ doesn't exist.

For toric varieties, when $\G=(\CC^*)^n$ is a torus and $W$ is a vector space,
$F_0$ is identified with a certain nonsingular subvariety of $\WmodG$ (depending on $\beta$).
Its virtual class is the usual fundamental class and the virtual normal bundle can be explicitly computed
using appropriate Euler sequences, see e.g. \cite{G}, \cite{CK}.
For now, we contend ourselves to remark that in the general case
the set-theoretic ``evaluation" map defined as in (\ref{eval}) can easily be seen to give a morphism
$$ev:F_0\lra \WmodG.$$ In the toric case, this is simply the embedding of $F_0$ in $\WmodG$. 
For $\WmodG$ a Grassmannian, the quasimap space with parametrized $\PP^1$ and no markings is a Quot scheme
on $\PP^1$. The fixed locus $F_0$ is identified with a disjoint union of flag bundles over the Grassmannian, while
the evaluation map is the projection. Again, the top Chern class of the virtual normal bundle, as well as its push-forward
by the evaluation map, can be calculated via appropriate Euler sequences, see \cite{BCK1}. One can extend this
description of $F_0$ and the calculation of the normal bundle in a straightforward manner to all
quotients of a vector space by a general linear group.

We are now ready to define the $I$-function. Recall from Definition \ref{eff-semigroup} that the curve classes
of quasimaps form a semigroup
$\mathrm {Eff}(W,\G,\theta)$. We let $N(W,\G,\theta)$ denote the Novikov ring of formal power series
$$N(W,\G,\theta)=\{ \sum_{\beta\in \mathrm{Eff}(W,\G,\theta)} a_\beta Q^\beta | a_\beta\in \CC\},$$
the $Q$-adic completion of the semigroup ring $\CC[\mathrm {Eff}(W,\G,\theta)]$.

\begin{Def}\label{I-fcn} The (big) $I$-function of $\WmodG$ is
\begin{align}
I_{\WmodG}({\bf t})&=1+\frac{{\bf t}}{z}+\sum_{\beta\neq 0 }Q^\beta ev_*\left(\frac{[F_0]^{\mathrm{vir}}}{e^{\CC^*}(N_{F_0}^\mathrm{vir})}\right)
\nonumber \\
&+\sum_{\beta}Q^{\beta}
\sum_{k\geq 1}\frac{1}{k!}({ev}_{k+1})_*\left (\frac{[\mathrm{Qmap}_{0,k+1}(\WmodG,\beta)]^{\mathrm{vir}}}{z(z-{\psi})}\prod_{j=1}^{k}{ev}_j^*({\bf t})\right ).\nonumber
\end{align}

It is a formal function of ${\bf t}\in H^{*}(\WmodG,\CC)$, taking values in $$H^{*}(\WmodG,\CC)\otimes_\CC N(W,\G,\theta)\left\{\!\!\left\{\frac{1}{z}\right\}\!\!\right\}.$$
\end{Def}

The above definition is an exact analogue of Givental's big $J$-function of $\WmodG$ in Gromov-Witten theory, with the graph spaces
$$G_{k,\beta}(\WmodG):=\overline{M}_{0,k}(\WmodG\times\PP^1,(\beta,1))$$ replaced by the quasimap spaces with one parametrized component.
Each of the terms in the sums is the push-forward via the appropriate evaluation map of the residue at $F_0$ of the class
$$ [\mathrm{Qmap}_{0,k}(\WmodG,\beta; \PP^1)]^{\mathrm{vir}}\cap\prod_{j=1}^{k}{ev}_j^*({\bf t}).$$
In fact, as mentioned in Remark \ref{e-parametrized}, we may define by the same formula an
$\epsilon$-$I$-function for each $\epsilon >0$, using the virtual classes of $\epsilon$-stable parametrized quasimaps. When
$\epsilon >1$ this definition reproduces precisely the usual definition of the $J$-function.

The {\it small} $I$-function is defined by restriction to the so-called small parameter space:
$$ I_{\WmodG}^{\mathrm{small}}({\bf t}^0,{\bf t}^2)=I_{\WmodG}({\bf t}_{\mathrm{small}}),$$
with
$${\bf t}_{\mathrm{small}}={\bf t}^0+{\bf t}^2\in  H^{0}(\WmodG,\CC)\oplus  H^{2}(\WmodG,\CC).$$

By analogy with \cite{G},
we also define formally {\it Givental's small} $I$-function by the formula
\begin{equation}\begin{split}
 I_{\WmodG}^{\mathrm{Givental}}({\bf t}^0,{\bf t}^2)
 &=e^{({\bf{t}}^0+{\bf{t}}^2)/z}(1+\sum_{\beta\neq 0 }Q^\beta e^{\beta({\bf t}^2)} I_{\beta}^{\mathrm{Givental}}) \\ &:=
e^{({\bf{t}}^0+{\bf{t}}^2)/z}\left(1+\sum_{\beta\neq 0 }Q^\beta e^{\beta({\bf t}^2)}  ev_*\left(\frac{[F_0]^{\mathrm{vir}}}{e^{\CC^*}(N_{F_0}^\mathrm{vir})}\right)\right).
\end{split}\end{equation}
(To avoid technicalities with the definition of $\beta({\bf t}^2)$, assume here that we have an identification between effective $\G$-equivariant
curve classes and actual effective curve classes $\beta\in H_2(\WmodG,\ZZ)$.)
When quasimap integrals of $\WmodG$ satisfy both the string and divisor equations, the two small $I$-functions coincide, however,
they will be different in general.

\end{document}